\newtheorem{theorem}{Theorem}[section]
\newtheorem{lemma}[theorem]{Lemma}
\newtheorem{prop}[theorem]{Proposition}
\newtheorem{cor}[theorem]{Corollary}
\newtheorem{thm}[theorem]{Theorem}
\newtheorem{lem}[theorem]{Lemma}
\newtheorem*{cor*}{Corollary}
\newtheorem*{thm*}{Theorem}
\newtheorem*{lem*}{Lemma}
\newtheorem*{prop*}{Proposition}
\theoremstyle{definition}
\newtheorem{defn}[theorem]{Definition}
\newtheorem{example}[theorem]{Example}
\newtheorem*{defn*}{Definition}
\newcommand{\define}[1]{{\it #1}}
\theoremstyle{remark}
\newcommand{\pr}{\operatorname{Prob}}
\newcommand{\act}{\curvearrowright}
\newcommand{\Ad}{\operatorname{Ad}}
\newcommand{\cA}{\mathcal{A}}
\newcommand{\cB}{\mathcal{B}}
\newcommand{\cG}{\mathcal{G}}
\newcommand{\cH}{\mathcal{H}}
\newcommand{\cI}{\mathcal{I}}
\newcommand{\cP}{\mathcal{P}}
\newcommand{\cS}{\mathcal{S}}
\newcommand{\cT}{\mathcal{T}}
\newcommand{\cU}{\mathcal{U}}
\newcommand{\cZ}{\mathcal{Z}}
\newcommand{\cV}{\mathcal{V}}
\newcommand{\bC}{{\mathbb{C}}}
\newcommand{\bE}{{\mathbb{E}}}
\newcommand{\bF}{{\mathbb{F}}}
\newcommand{\bN}{{\mathbb{N}}}
\newcommand{\bZ}{{\mathbb{Z}}}
\newcommand{\bQ}{{\mathbb{Q}}}
\newcommand{\csr}{C^{*}_{\lambda}(\Gamma)}
\renewcommand{\ll}{\ell^\infty}
\newcommand{\aut}{\operatorname{{\bf aut}}}
\newcommand{\im}{\mathrm{Im}}
\newcommand{\ind}{\mathrm{Ind}}
\newcommand{\id}{\operatorname{id}}
\newcommand{\Rad}{\operatorname{Rad}}
\newcommand{\stab}{\operatorname{Stab}}
\newcommand{\Hil}{\mathcal H}
\newcommand{\C}{\mathbb C}
\newcommand{\G}{\Gamma}
\title[]{Topological boundaries of unitary representations}
\author[A. Bearden]{Alex Bearden}
\address{Department of Mathematics\\ University of Texas at Tyler\\USA}
\email{cbearden@uttyler.edu}
\author[M. Kalantar]{Mehrdad Kalantar}
\address{Department of Mathematics\\ University of Houston\\USA}
\email{kalantar@math.uh.edu}
\date{}
\begin{document}

\begin{abstract}
We introduce and study a generalization of the notion of the Furstenberg boundary of a discrete group $\Gamma$ to the setting of a general unitary representation $\pi: \Gamma \to B(\cH_\pi)$. This space, which we call the ``Furstenberg-Hamana boundary'' of the pair $(\Gamma, \pi)$ is a $\Gamma$-invariant subspace of $B(\cH_\pi)$ that carries a canonical $C^*$-algebra structure. In many natural cases, including when $\pi$ is a quasi-regular representation, the Furstenberg-Hamana boundary of $\pi$ is commutative, but can be non-commutative in general. We study various properties of this boundary, and give some applications.
\end{abstract}

\thanks{MK was partially supported by the NSF Grant DMS-1700259.}

\maketitle


\section{Introduction}
The notion of (topological) boundary actions were introduced and studied in the seminal work of Furstenberg \cite{Furs63}, \cite{Furs73}, as a tool to apply in rigidity problems in the context of semisimple groups.
This notion has received much less attention compared to its measurable counterpart in that program, but recently, it has emerged as a powerful tool in several rigidity problems concerning reduced group $C^*$-algebras. More specifically, dynamical properties of the action of a discrete group $\Gamma$ on its Furstenberg boundary $\partial_F\Gamma$ were used to settle problems of simplicity, uniqueness of trace, and tight nuclear embeddings of the reduced $C^*$-algebra $C^*_\lambda(\Gamma)$ of $\Gamma$ (\cite{KK}, \cite{BKKO}, \cite{LeBoud17}).
The connection between dynamical properties of boundary actions and structural properties of the reduced group $C^*$-algebra was initially made through a canonical identification $C(\partial_F\Gamma)\cong \cI_\Gamma(\bC)$, where the latter is the $\Gamma$-injective envelope of the trivial $\Gamma$-$C^*$-algebra in the sense of Hamana \cite{Ham85}.

The main purpose of this paper is to introduce a ``Furstenberg-type boundary'' $\cB_\pi$ associated to a pair $(\Gamma, \pi)$ of a discrete group $\Gamma$ and a unitary representation $\pi: \Gamma\to B(\cH_\pi)$. In view of the above-mentioned identification, this boundary is constructed as a relative version of Hamana's notion of $\Gamma$-injective envelopes, and is naturally identified as a $\Gamma$-invariant subspace $\cB_\pi\subseteq B(\cH_\pi)$. It also admits a canonical $C^*$-algebra structure, as it appears as the image of a u.c.p.\ idempotent. 

Since every unital $C^*$-algebra $\cA$ is of the form $C^*_\pi(\Gamma)$ (the $C^*$-algebra generated in $B(\cH_\pi)$ by $\pi(g)$, $g\in\Gamma$), for some discrete group $\Gamma$ and a unitary representation $\pi$ of $\Gamma$, it is natural to seek for a similar object to the Furstenberg boundary that is associated to a pair $(\Gamma, \pi)$.
We prove some general properties of $\cB_\pi$ which confirm that our definition is natural.

The first problem in the $C^*$-algebra context that the theory of boundary actions was successfully applied to is Ozawa's nuclear embedding conjecture. Recall that by a result of Kirchberg \cite{Kir95}, every exact $C^*$-algebra can be embedded into a nuclear $C^*$-algebra. Although concrete nuclear embeddings have been constructed for exact $C^*$-algebras in certain special cases, in general the nuclear embeddings that are guaranteed to exist by the general abstract results can be difficult to realize. In \cite{Oza07} Ozawa considered the problem of constructing a ``tight'' nuclear embedding of a given exact $C^*$-algebra, and conjectured that for any exact $C^*$-algebra $\cA$, there is a nuclear $C^*$-algebra $\cB$ such that $\cA \subseteq \cB \subseteq \cI(\cA)$, where $\cI(\cA)$ denotes the injective envelope of $\cA$.
He proved the above for the reduced $C^*$-algebra $C^*_\lambda(\bF_n)$ of the free group $\bF_n$, which is known to be exact. In \cite{KK}, Kennedy and the second-named author proved the conjecture above for the reduced $C^*$-algebra of every discrete exact group $\Gamma$ by showing that the $C^*$-algebra $\cB$ generated by $C^*_\lambda(\Gamma)$ and $C(\partial_F\Gamma)$ in $B(\ell^2(\Gamma))$ is nuclear, and $C^*_\lambda(\Gamma) \subseteq \cB \subseteq \cI(C^*_\lambda(\Gamma))$.
Ozawa's conjecture remains open in general. 

Other, even more high-profile, problems in which the application of boundary actions resulted in significance progress were the problems of characterizing discrete groups $\Gamma$ whose reduced $C^*$-algebra $C^*_\lambda(\Gamma)$ is simple (meaning that the only norm-closed two-sided ideals in $C^*_\lambda(\Gamma)$ are zero and $C^*_\lambda(\Gamma)$ itself) or admits a unique trace (namely the canonical trace $\tau_\lambda$ defined by $\tau_\lambda(a):=\langle a\delta_e,\delta_e\rangle$ for $a \in C^*_\lambda(\Gamma)$). 
The group $\Gamma$ is said to be {\em $C^*$-simple} if $C^*_\lambda(\Gamma)$ is simple, and is said to have the {\em unique trace property} if $C^*_\lambda(\Gamma)$ has a unique trace.

Since Powers' proof \cite{P1975} in 1975 that the free group on two generators is both $C^*$-simple and has the unique trace property, it had been a major open problem to characterize groups with either of these properties, and in particular to determine whether they are equivalent (see, e.g., \cite{Del07} for this fact, and for a nice general survey of the subject matter).
In a series of breakthrough works \cite{KK}, \cite{BKKO}, \cite{LeBoud17}, dynamical properties of the Furstenberg boundary $\partial_F\Gamma$ were used to give answers to all the above problems. In \cite{KK}, Kennedy and the second named author proved that a discrete group $\Gamma$ is $C^*$-simple if and only if its action on the Furstenberg boundary $\partial_F \Gamma$ is free. This characterization was used to prove $C^*$-simplicity of a large class of groups, including all previously known examples. Then, in \cite{BKKO}, Breuillard, Kennedy, Ozawa, and the second named author proved that $\Gamma$ has the unique trace property if and only if its action on the Furstenberg boundary $\partial_F \Gamma$ is faithful. In particular, every $C^*$-simple group has the unique trace property. Finally, Le Boudec showed in \cite{LeBoud17} that there are groups $\Gamma$ whose action on the Furstenberg boundary $\partial_F \Gamma$ is faithful but not free.

After proving several properties of the boundary associated to a general representation, we focus our attention to the case of quasi-regular representations. 
It turns out that the associated boundaries of these representations have in fact more tractable and meaningful structures (obviously the class of unital $C^*$-algebras is too large to expect that a general theory of boundaries associated to group representations would be powerful enough to lead to strong results similar to the ones mentioned above in this generality).
We see that in the case of quasi-regular representations $\lambda_{\Gamma/\Lambda}$ associated to subgroups $\Lambda\subset\Gamma$, the boundary $\cB_{\lambda_{\Gamma/\Lambda}}$ is a commutative $C^*$-algebra, hence of the form $\cB_\pi\cong C(X)$ for some compact $\Gamma$-space $X$. We will give a dynamical characterization of this boundary, which should be considered as ``boundary of the quotient''.

In extreme contrast to the special case of the above, we will show that $\cB_\pi$ can be very non-commutative. In fact, we prove that any separable unital purely infinite $C^*$-algebra can be embedded into $\cB_\pi$ for some unitary representation $\pi$ of some discrete countable group $\Gamma$.

In light of the above-mentioned results, one can say that the level of non-triviality of the Furstenberg boundary action $\Gamma\act\partial_F\Gamma$ (e.g. faithfulness or freeness) measures how non-amenable the group $\Gamma$ is.
Recall that a discrete group $\Gamma$ is amenable if there is an invariant mean on the algebra $\ell^\infty(\Gamma)$, i.e., a unital positive linear map $\phi : \ell^\infty(\Gamma) \to \bC$ that is invariant with respect to the action of $\Gamma$ on $\ell^\infty(\Gamma)$ by left translation. By identifying the algebra $\bC$ with a subalgebra of $\ell^\infty(\Gamma)$, the map $\phi$ can be viewed as a unital positive $\Gamma$-equivariant projection. From this perspective, a group $\Gamma$ is non-amenable if the algebra $\bC$ is ``too small'', to admit such a projection. In fact $C(\partial_F\Gamma)$ is the smallest subspace on which $\ell^\infty(\Gamma)$ can be mapped via a unital positive $\Gamma$-equivariant projection.

Observe that since there is a unital positive $\Gamma$-equivariant projection $\psi: B(\ell^2(\Gamma))\to\ll(\Gamma)$ (projection onto the diagonal), where $\Gamma\act B(\ell^2(\Gamma))$ by inner automorphisms via left regular representation, we see that $\Gamma$ is amenable if there is a unital positive $\Gamma$-equivariant projection $\phi : B(\ell^2(\Gamma)) \to \bC$. Motivated by this observation, Bekka 
defined and studied in \cite{Bekka} the notion of amenability for a unitary representation (in the context of locally compact groups). A unitary representation $\pi:\Gamma\to\cU(\cH_\pi)$ is  amenable if there is an invariant mean (or equivalently, a unital positive $\Gamma$-equivariant projection) $\phi: B(\Hil_\pi)\to \bC$.
This notion has become an important concept in the representation theory of locally compact groups. Thus, similarly to the above, a representation $\pi$ is non-amenable if $\bC$ is ``too small'', to admit such a projection. Indeed $\cB_\pi$ is the minimal subspace of $B(\ell^2(\Gamma))$ on which the latter can be mapped by means of a unital completely positive $\Gamma$-equivariant projection. In particular, the level of non-triviality of the action $\Gamma\act \cB_\pi$ is a measure of non-amenability of the representation $\pi$.\\

The organization of the paper is as follows.
In Section 2, we gather some of the preliminary and background material that we will be using throughout the paper.

In Section 3, we construct the boundary $\cB_\pi$ associated to any unitary representation $\pi$ of $\G$, and prove some basic results that show that this boundary generally behaves just as one would expect. We prove that $\cB_\pi$ is completely determined by the weak equivalence class of $\pi$. We also prove some basic results about the boundary in relation to various constructions, including induced representations, tensor products, and restriction to subgroups. We also investigate the situation when $\G$ acts amenably on a FH-boundary.

In Section 4, we study generalized notions of amenability and co-amenability of subgroups relative to a unitary representation, in order to better understand the kernel of the boundary action $\Gamma\act\cB_\pi$.

In Section 5, we prove an extension property for our boundaries, the counterpart of which in the case of the Furstenberg boundary $\partial_F\Gamma$ was the key in \cite[Theorem 1.4]{BKKO} in completely settling the problem of determining the connection between $C^*$-simplicity of a group $\Gamma$ and its normal subgroups, which answered a question of de la Harpe \cite{Del07}.

In Section 6, we consider the problem of uniqueness of certain types of traces and the relation to faithfulness of the boundary action.

In Section 7, we study the boundary of the quasi-regular representation $\lambda_{\Gamma/\Lambda}$ associated to the subgroups $\Lambda$ of $\G$. In this case, $\cB_{\lambda_{\Gamma/\Lambda}}$ is a commutative $C^*$-algebra, hence yields a compact $\G$-space. We prove a dynamical characterization of this compact $\G$-space.

In Section 8, we show that not all boundaries $\cB_\pi$ are commutative.\\

\noindent
\textbf{Acknowledgements.}
The second named author is grateful to Yair Hartman for many fruitful conversations and helpful comments.
We also thanks Nicolas Monod for his thoughtful comments, and pointing out a mistake in an earlier version of this paper.


\section{Preliminaries}

\subsection{Group actions and $C^*$-dynamical systems}

Throughout the paper $\Gamma$ is a discrete group, 
and $\Gamma\act X$ denotes an action of $\Gamma$ by homeomorphisms 
on a compact (Hausdorff) space $X$. In this case we say $X$ is a compact $\Gamma$-space.
The action $\Gamma\act X$ is \define{minimal} if $X$ 
has no non-empty proper closed $\Gamma$-invariant subset.
We denote by $\pr(X)$ the set of all Borel probability measures on $X$.

We will also consider group actions in the measurable setting. An action of $\Gamma$ on a probability space $(X, \nu)$ by measurable isomorphisms is said to be \emph{non-singular} if  $g\nu$ and $\nu$ have the same null sets. Any non-singular action $\Gamma\act (X, \nu)$ canonically induces an action $\Gamma\act L^\infty(X, \nu)$.

For a Hilbert space $\cH$ we denote by $B(\cH)$ the set of all bounded operators on $\cH$. An element $a$ in $B(\cH)$ is said to be positive, written $a\geq 0$, if $a=b^*b$ for some $b\in B(\cH)$. A subalgebra $\cA \subseteq B(\cH)$ is a $C^*$-algebra if it is closed in the operator norm and under taking adjoints. In this case, $\cA$ is \define{unital} if it contains the identity operator on $\cH$.

A linear map $\phi:\cA\to\cB$ between $C^*$-algebras is \define{unital} if $\phi(1_\cA)=1_\cB$, it is \define{positive} if it sends positive elements to positive elements, and it is \define{completely positive} if the maps $\id\otimes \phi : M_n(\bC)\otimes \cA \to M_n(\bC)\otimes \cB$ are positive for all $n\in\bN$. We say $\phi$ is completely isometric if $\id\otimes \phi$ is isometric for all $n\in\bN$. The map $\phi$ is a $*$-homomorphism if it is multiplicative and $\phi(a^*) = \phi(a)^*$ for all $a\in\cA$. A bijective $*$-homomorphism is called a $*$-isomorphism. We denote by $\mathrm{Aut}(\cA)$ the group of all $*$-automorphisms on $\cA$, i.e, $*$-isomorphisms $\cA\to\cA$.

A positive linear functional of norm 1 on a $C^*$-algebra is called a \emph{state}. 

A state $\rho$ is \define{faithful} if $\rho(a)>0$ for any non-zero positive element $a$. A state $\tau$ on a $C^*$-algebra $\cA$ is a \define{trace} if $\tau(ab) = \tau(ba)$ for all $a, b \in \cA$.

A unital $C^{*}$-algebra $\cA$ is called a \define{$\Gamma$-$C^{*}$-algebra} if there is an action $\alpha:\Gamma \act \cA$ of $\Gamma$ on $\cA$ by $*$-automorphisms, that is, $\alpha$ is a group homomorphism $\Gamma \to \mathrm{Aut}(\cA)$.

\subsection{Operator systems}

A subspace $\cV \subseteq B(\cH)$ is an \emph{operator system} if it contains the identity operator on $\cH$, and is closed in the operator norm and under taking adjoints.

The notions of being unital, positive, completely positive, and completely isometric are defined similarly for linear maps between operator systems.

Similarly, an operator system $\cV$ is a \define{$\Gamma$-operator system} if there is an action $\alpha:\Gamma \act \cV$, that is, $\alpha$ is a group homomorphism from $\Gamma$ to the group of all unital bijective completely isometric maps on $\cV$.

We call a linear map $\phi:\cV_1\to \cV_2$ between operator systems a \emph{$\Gamma$-map} if it is unital completely positive (u.c.p.), and $\Gamma$-equivariant, that is
\[
\phi(ga) = g\phi(a), \quad \forall g\in \Gamma, \quad a\in \cV_1 . 
\]
By a \emph{$\Gamma$-embedding} (respectively, \emph{$\Gamma$-projection}) we mean a completely isometric (respectively, idempotent) $\Gamma$-map. A bijective completely isometric $\Gamma$-map is called a \emph{$\Gamma$-isomorphism}. We recall the fact that any bijective completely isometric map between $C^*$-algebras is automatically a $*$-isomorphism of $C^*$-algebras.

Let $\cA$ be a $C^*$-algebra and $\cV\subseteq \cA$ be an operator subsystem. Suppose there is a surjective idempotent unital completely positive map $\psi :\cA \to \cV$. Then by a result of Choi and Effros \cite{ChoEff}, the formula
\[
a \cdot b = \psi(ab), \quad a, b \in \cV .
\]
defines a product on $\cV$, called the \emph{Choi-Effros product}, which turns $\cV$ into a $C^*$-algebra. The $C^*$-algebra obtained in this way is unique up to isomorphism, and in particular does not depend on the map $\psi$.

Any action $\Gamma\act \cV$ induces an adjoint action of $\Gamma$ on the weak*-compact convex set of states on $\cA$. For a state $\nu$ on a $\Gamma$-$C^{*}$-algebra $\cA$ or a $\Gamma$-operator system $\cV$, we denote by $\cP_\nu$ its corresponding \define{Poisson map}, i.e., the unital positive $\Gamma$-equivariant map from $\cA$ or $\cV$ to $\ell^\infty(\Gamma)$ defined by
\begin{equation}\label{def:Poisson-map}
\cP_\nu(a)(g) =  \nu(g^{-1}a), \quad g \in \Gamma .
\end{equation}

\subsection{Unitary representations}
Let $\pi:\Gamma \to \cU(\cH_\pi)$ be a unitary representation on the Hilbert space $\cH_\pi$ (where $\cU(\cH_\pi)$ is the group of unitary operators on $\cH_\pi$). We denote by $C^{*}_{\pi}(\Gamma) := \overline{\operatorname{span}\{\pi(g) : g\in\Gamma\}}^{\|\cdot\|}\subseteq \cB (\cH_\pi)$ the $C^*$-algebra generated by $\pi(\Gamma)$.

In this work we are interested in the dynamics of the action $\Gamma$ on $B(\cH_\pi)$ by inner automorphisms $\Ad_g(x) := \pi(g)x\pi(g^{-1})$, $g\in \Gamma$, $x\in B(\cH_\pi)$, as well as on $C^*$-subalgebras $\cA\subseteq B(\cH_\pi)$ and operator subsystems $\cV\subseteq B(\cH_\pi)$ that are invariant under this action. 

An important class of unitary representations is the (left) quasi-regular representations $\lambda_{\Gamma/\Lambda} : \Gamma \to \cU(\ell^2(\Gamma/\Lambda))$ defined by
\[
(\lambda_{\Gamma/\Lambda}(g)\xi)(h\Lambda) = \xi(g^{-1}h\Lambda)~~~~~~ \left(h\in\Gamma, ~\xi\in \ell^2(\Gamma/\Lambda)\right),\] 
where $\Lambda \leq \Gamma$ is a subgroup. In the case of the trivial subgroup $\Lambda=\{e\}$, the $C^*$-algebra $C^{*}_{\lambda_\Gamma}(\Gamma)$ is called the \define{reduced $C^*$-algebra of $\Gamma$}. And for the choice of $\Lambda= \Gamma$ the corresponding quasi-regular representation is the trivial representation of $\Gamma$, which we denote by $1_\Gamma$.

Let $\pi$ and $\sigma$ be two unitary representations of $\Gamma$. We say $\pi$ is weakly contained in $\sigma$, written $\pi\prec\sigma$, if the map $\sigma(g)\mapsto\pi(g)$ extends to a $*$-homomorphism $C^*_\sigma(\Gamma)\to C^*_\pi(\Gamma)$, which then obviously must be surjective.

Let $\Lambda\leq \Gamma$ be a subgroup, and let $\pi:\Lambda\to\cU(\cH_\pi)$ be a unitary representation of $\Lambda$. We denote by $\ind_\Lambda^\Gamma(\pi)$ the unitary representation of $\Gamma$ induced by $\pi$ (see, e.g., \cite[Appendix E]{BHV08}).

\subsection{Amenability and coamenability}
Recall that a discrete group $\Gamma$ is amenable if $\ell^\infty(\Gamma)$ admits a (left) translation invariant mean, i.e., a state $\phi:\ell^\infty(\Gamma)\to \bC$ such that $\phi(f_g)=\phi(f)$ for every $f\in \ell^\infty(\Gamma)$ and $g\in \Gamma$, where $f_g(h) = f(g^{-1}h)$, $h\in \Gamma$, is the left translation of $f$ by $g$. With the (left) translation action $\Gamma\act\ell^\infty(\Gamma)$ and the trivial action $\Gamma\act\bC$, an invariant mean is nothing but a $\Gamma$-projection $\ell^\infty(\Gamma)\to \bC$. Moreover, there is a $\Gamma$-projection $\bE:B(\ell^2(\Gamma))\to \ell^\infty(\Gamma)$ defined by $\bE(a)(g) = \langle a\delta_g, \delta_g\rangle$ for $a\in B(\ell^2(\Gamma))$ and $g\in\Gamma$, where $\Gamma$ acts on $B(\ell^2(\Gamma))$ by inner automorphisms $\Ad_{\lambda_\Gamma(g)}$. Thus, a group $\Gamma$ is amenable iff there is a $\Gamma$-projection $\bE:B(\ell^2(\Gamma))\to\bC$. 

\begin{defn}\cite{Bekka}
A unitary representation $\pi:\Gamma\to\cU(\cH_\pi)$ is an \emph{amenable representation} if there is a $\Gamma$-projection $\phi: B(\Hil_\pi)\to \bC$, where $\Gamma$ acts on $B(\cH_\pi)$ by inner automorphisms $\Ad_{\pi(g)}$, $g\in\Gamma$.
\end{defn}

By the above comments we see $\Gamma$ is amenable iff the regular representation $\lambda_\Gamma$ is amenable.

Amenability can also be characterized in terms of weak containment. The group $\Gamma$ is amenable iff $1_\Gamma\prec\lambda_\Gamma$, iff $\pi\prec\lambda_\Gamma$ for every unitary representation $\pi$ of $\Gamma$. A subgroup $\Lambda\subseteq \Gamma$ is amenable iff $\lambda_{\Gamma/\Lambda}\prec\lambda_\Gamma$.

Generally, every group $\Gamma$ has a largest amenable normal subgroup, called the \emph{amenable radical} $\Rad(\Gamma)$. It was proved by Furman in \cite{Furm} that $\Rad(\Gamma)$ coincides with the kernel of the action of $\Gamma$ on its Furstenberg boundary $\partial_F \Gamma$.

Let $\Lambda\subseteq \Gamma$ be a subgroup. We say $\Lambda$ is \emph{co-amenable} in $\Gamma$ if $1_\Gamma \prec \lambda_{\Gamma/\Lambda}$, which is equivalent to existence of a $\Gamma$-invariant state (or equivalently, a $\Gamma$-projection) $\ell^\infty(\Gamma/\Lambda)\to\bC$.


\section{The Furstenberg--Hamana boundary: definition and general properties}


\subsection{Injective envelopes} \label{sec:G-inj-envelope}

In this section we recall fundamental notions in the theory of injective envelopes for objects in a category of operator systems. For more details on the general theory of operator systems and injective envelopes, we refer the reader to Hamana's papers \cite{Ham79a,Ham79b}, or to Paulsen's book \cite{Pau02}*{Chapter 15}.

Let $\mathfrak{C}$ be a category of operator systems (with possibly extra structures) whose morphisms are u.c.p.\ maps (again, with possibly more properties). An operator system $\cV$ in $\mathfrak{C}$ is said to be \emph{injective} in $\mathfrak{C}$ if for every completely isometric morphism $\iota : \cS \to \cT$ and every morphism $\psi : \cS \to \cV$ there is a morphisms $\hat{\phi} : \cT \to \cV$ such that $\hat{\phi} \iota = \phi$.

In particular, we are interested the category of operator systems or the category of unital $C^*$-algebras with u.c.p.\ maps as the morphisms, and the category of $\Gamma$-operator systems or unital $\Gamma$-$C^*$-algebras with $\Gamma$-maps as morphisms.

Hamana proves in \cite{Ham85}*{Lemma 2.2} that if $\cV$ is an injective operator system, then the $\Gamma$-operator system $\ell^\infty(\Gamma,\cV)$ is always $\Gamma$-injective.

Let $\cV$ be a ($\Gamma$-)operator systems. A ($\Gamma$-)\emph{extension of $\cV$} is a pair $(\cT, \iota)$ consisting of a ($\Gamma$-)operator system $\cT$, and a completely isometric morphism $\iota : \cV \to \cT$.

A ($\Gamma$-)extension $(\cT, \iota)$ of $\cV$ is ($\Gamma$-)\emph{injective} if $\cT$ is ($\Gamma$-)injective. It is ($\Gamma$-)\emph{essential} if for every morphism $\phi : \cT \to \cS$ such that $\phi \iota$ is completely isometric on $\cV$, $\phi$ is necessarily completely isometric on $\cT$. It is ($\Gamma$-)\emph{rigid} if for every morphism $\phi : \cT \to \cT$ such that $\phi \iota = \iota$ on $\cV$, $\phi$ is necessarily the identity map on $\cT$.

\begin{defn}
Let $\cV$ be a ($\Gamma$-)operator system. A ($\Gamma$-)extension of $\cV$ that is both ($\Gamma$-)injective and ($\Gamma$-)essential is said to be a ($\Gamma$-)\emph{injective envelope of $\cV$}.
\end{defn}

We note that by \cite{Ham85}*{Lemma 2.4}, every $\Gamma$-injective envelope of $\cV$ is $\Gamma$-rigid. 

\begin{thm}[Hamana]\label{thm:existence-G-inj-envelope}
Every operator system $\cV$ has an injective envelope, denoted by $\cI(\cV)$, which is unique up to complete isometric isomorphism. 

Similarly, if $\Gamma$ is a discrete group, then every $\Gamma$-operator system has a $\Gamma$-injective envelope, denoted by $\cI_\Gamma(\cV)$, which is unique up to complete isometric $\Gamma$-isomorphism.
\end{thm}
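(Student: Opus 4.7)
The plan is to treat the $\Gamma$-equivariant case (the non-equivariant case being the special case $\Gamma=\{e\}$, or alternatively the base case one appeals to when constructing the $\Gamma$-version). The strategy has three ingredients: (i) embed $\cV$ into some $\Gamma$-injective $\cW$ so one has raw material to work with, (ii) extract from $\cW$ a ``minimal'' $\Gamma$-u.c.p.\ retraction onto a subspace containing $\cV$, and (iii) show the image is both $\Gamma$-essential and unique.

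For step (i), apply the Choi--Effros / Hamana-type embedding to get $\cV$ sitting (completely isometrically) inside its non-equivariant injective envelope $\cI(\cV)$, and then invoke the lemma from \cite{Ham85}*{Lemma 2.2} cited above: $\cW := \ell^\infty(\Gamma, \cI(\cV))$ is $\Gamma$-injective, and $\cV$ embeds $\Gamma$-equivariantly into $\cW$ by $v \mapsto (g\mapsto g^{-1}v)$. Now let $\mathfrak{S}$ be the set of $\Gamma$-maps $\phi : \cW\to \cW$ with $\phi|_\cV = \id_\cV$. Then $\mathfrak{S}$ is nonempty (by $\Gamma$-injectivity of $\cW$, extend $\id_\cV$), convex, closed under composition, and compact in the point-weak-$*$ topology (since the unit ball of $\cW$ is).

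For step (ii), partially order $\mathfrak{S}$ by declaring $\phi \preceq \psi$ if the seminorm $a \mapsto \|\phi(a)\|$ is dominated by $a \mapsto \|\psi(a)\|$ on $\cW$; equivalently, work with all matrix-level seminorms $\|\phi^{(n)}(\cdot)\|$ to keep the argument completely positive. Chains in $\mathfrak{S}$ admit lower bounds by a standard point-weak-$*$ compactness / cluster-point argument, so Zorn produces a minimal element $\phi\in\mathfrak{S}$. The heart of the proof---\emph{and the main obstacle}---is showing that such a minimal $\phi$ is idempotent. For this, observe $\phi\circ\phi \in \mathfrak{S}$ and $\phi\circ\phi \preceq \phi$; by minimality the two seminorms agree, and then a short argument using the Schwarz inequality for u.c.p.\ maps (applied to $\phi(a)-\phi(\phi(a))$) forces $\phi\circ\phi = \phi$. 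Set $\cI_\Gamma(\cV) := \phi(\cW)$; by Choi--Effros this inherits a $C^*$-algebra structure whenever $\cW$ is, and it is $\Gamma$-injective as a $\Gamma$-equivariant retract of a $\Gamma$-injective.

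For step (iii), I verify $\Gamma$-essentiality of $(\cI_\Gamma(\cV),\iota)$ directly from minimality of $\phi$: given a $\Gamma$-map $\psi : \cI_\Gamma(\cV) \to \cS$ with $\psi\iota$ completely isometric, extend $\iota^{-1}\colon \psi(\cV) \to \cW$ by $\Gamma$-injectivity of $\cW$ to some $\Gamma$-map $\tilde\eta : \cS \to \cW$, and note that $\tilde\eta\circ\psi\circ\phi$ lies in $\mathfrak{S}$ and satisfies $\tilde\eta\circ\psi\circ\phi \preceq \phi$; minimality gives equality of seminorms, and unpacking shows $\psi$ is completely isometric on $\cI_\Gamma(\cV)$. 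Uniqueness is then formal: given two $\Gamma$-injective envelopes $(\cT_i,\iota_i)$, $\Gamma$-injectivity of $\cT_2$ extends $\iota_2\iota_1^{-1}$ to $\Phi:\cT_1\to\cT_2$ and symmetrically $\Psi:\cT_2\to\cT_1$; the composition $\Psi\Phi$ fixes $\iota_1(\cV)$ pointwise, hence equals the identity by $\Gamma$-rigidity of the injective envelope (\cite{Ham85}*{Lemma 2.4}), and similarly for $\Phi\Psi$. The non-equivariant statement is recovered by running the identical argument with trivial $\Gamma$, using $\cW = B(\cH)$ (for $\cV\subseteq B(\cH)$) as the ambient injective.
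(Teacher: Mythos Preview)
The paper does not actually prove this theorem: it is stated with attribution to Hamana and no proof is supplied. What the paper \emph{does} prove, in \S3.2, is the closely related ``relative'' statement that a minimal $\Gamma$-map $B(\cH_\pi)\to B(\cH_\pi)$ exists and is an idempotent with essential, rigid, $\pi$-injective image. Your outline follows the same architecture as both Hamana's original argument and the paper's relative version: embed into a $\Gamma$-injective, pass to a minimal $\Gamma$-map via Zorn on the seminorm pre-order, show idempotence, deduce essentiality and uniqueness. So at the level of strategy your proposal is the standard one and matches what the paper does in the analogous situation.

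The one place where your sketch is genuinely thin is the idempotence step. You write that once the seminorms of $\phi$ and $\phi^2$ agree, ``a short argument using the Schwarz inequality'' applied to $\phi(a)-\phi(\phi(a))$ yields $\phi^2=\phi$. I do not see how the Schwarz inequality alone does this: knowing $\|\phi(x)\|=\|\phi^2(x)\|$ for all $x$ tells you that $\phi$ is isometric on $\im(\phi)$, but not that it fixes $\im(\phi)$ pointwise. The standard device---and exactly the one the paper uses in its proof of Proposition~\ref{min-image-properties}(1)---is the Ces\`aro average $\phi^{(n)}:=\tfrac{1}{n}\sum_{k=1}^n\phi^k$, which lies in $\mathfrak{S}$ (by convexity), is $\preceq\phi$, hence has the same seminorm as $\phi$ by minimality, and satisfies
\[
\|\phi(a)-\phi^2(a)\|=\|\phi(a-\phi(a))\|=\|\phi^{(n)}(a-\phi(a))\|=\tfrac{1}{n}\|\phi(a)-\phi^{n+1}(a)\|\le\tfrac{2}{n}\|a\|\to 0.
\]
With this correction in place, your essentiality argument (compose back into $\cW$ via $\Gamma$-injectivity and invoke minimality) and your uniqueness argument (rigidity via \cite{Ham85}*{Lemma 2.4}) are correct and standard.
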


A unital ($\Gamma$-)$C^*$-algebra is ($\Gamma$-)injective as a $C^*$-algebra if and only if it is ($\Gamma$-)injective as a ($\Gamma$-)operator system.

For any Hilbert space $\cH$ the space $B(\cH)$ is an injective operator system. Thus, if $\cV\subseteq B(\cH)$ is an operator system, then this inclusion extends to a u.c.p.\ map $\cI(\cV)\to B(\cH)$, which is completely isometric by essentiality. If we identify $\cI(\cV)$ with its copy under this embedding, then using injectivity of $\cI(\cV)$ we obtain a u.c.p.\ idempotent from $B(\cH)$ onto $\cI(\cV)$. Hence, the latter turns into a $C^*$-algebra with the Choi-Effros product resulting from the latter u.c.p.\ idempotent.

If $\Gamma$ is a discrete group, then $\ell^\infty(\Gamma,B(\cH))$ is $\Gamma$-injective. Thus, if $\cV\subseteq B(\cH)$ is a $\Gamma$-operator system, then similarly to the above, we have a $\Gamma$-embedding $\cI_\Gamma(\cV)\to \ell^\infty(\Gamma,B(\cH))$, and a $\Gamma$-projection $\ell^\infty(\Gamma,B(\cH))\to \cI_\Gamma(\cV)$, which then provides the latter with a $C^*$-algebra structure via the Choi-Effros product.

\subsection{Existence of relative $\Gamma$-injective envelopes}
Fix a unitary representation $\pi:\Gamma\to\cU(\cH_\pi)$ of $\Gamma$ 
on a Hilbert space $\cH_\pi$, and consider $B(\cH_\pi)$ as a $\G$-$C^*$-algebra via the $\G$-action $s.T = \pi(s)T\pi(s^{-1})$ for $s \in \G$, $T \in B(\cH_\pi)$.
Following similar constructions as in \cite{Ham85} and \cite{Pau11}, 
we prove the existence of the relative $\Gamma$-injective envelope of 
$\bC1 \subseteq B(\cH_\pi)$.

\begin{prop}
On the set $\mathcal G$ of all $\Gamma$-maps $\phi: B(\cH_\pi) \to B(\cH_\pi)$ define the partial pre-order
\[
\phi \leq \psi \, \text{ if } \, \|\phi(x)\| \leq \|\psi(x)\|\, \text{ for all }\, x\in B(\cH_\pi) .
\]
Then $\mathcal G$ contains a minimal element.
\end{prop}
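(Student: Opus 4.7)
The plan is to apply Zorn's lemma to the pre-ordered set $\mathcal{G}$, which is nonempty since it contains the identity map on $B(\cH_\pi)$. To do this I need to produce, for every chain $\{\phi_\alpha\}_{\alpha\in A}\subseteq \mathcal{G}$, a $\Gamma$-map $\phi\in\mathcal{G}$ with $\phi\leq\phi_\alpha$ for all $\alpha$. The construction is the standard Hamana-style point-ultraweak limit: fix an ultrafilter $\omega$ on the directed set $A$ that refines the order filter (so $\{\alpha:\alpha\geq\beta\}\in\omega$ for every $\beta\in A$), and define
\[
\phi(x) \;:=\; \underset{\alpha\to\omega}{\mathrm{WOT\text{-}lim}}\,\phi_\alpha(x), \qquad x\in B(\cH_\pi).
\]
For each $x$, the net $\{\phi_\alpha(x)\}$ lies in the norm-bounded (by $\|x\|$) ball of $B(\cH_\pi)$, which is compact in the weak operator topology, so the limit exists.

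Next I would verify that $\phi\in\mathcal{G}$. Linearity and unitality are immediate from the corresponding properties of each $\phi_\alpha$. Complete positivity follows because, for any $n$, the amplifications $\id_n\otimes\phi_\alpha$ are positive and the WOT is well-behaved under matrix amplifications: a WOT-limit of positive operators is positive. $\Gamma$-equivariance is a consequence of the WOT-continuity of the inner action $\Ad_{\pi(g)}$, namely
\[
\phi(g.x) \;=\; \underset{\omega}{\mathrm{WOT\text{-}lim}}\,\phi_\alpha(g.x) \;=\; \underset{\omega}{\mathrm{WOT\text{-}lim}}\,g.\phi_\alpha(x) \;=\; g.\phi(x),
\]
where I used $\Gamma$-equivariance of each $\phi_\alpha$ and the fact that $T\mapsto \pi(g)T\pi(g^{-1})$ is WOT-continuous.

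It remains to check that $\phi\leq\phi_\beta$ for every $\beta\in A$. Since $\omega$ refines the order filter, the tail $\{\alpha:\alpha\geq\beta\}$ lies in $\omega$, and for such $\alpha$ the chain condition gives $\|\phi_\alpha(x)\|\leq\|\phi_\beta(x)\|$. By WOT-lower semicontinuity of the operator norm,
\[
\|\phi(x)\| \;\leq\; \liminf_{\alpha\to\omega}\|\phi_\alpha(x)\| \;\leq\; \|\phi_\beta(x)\|,
\]
so indeed $\phi\leq\phi_\beta$. Thus every chain in $\mathcal{G}$ has a lower bound, and Zorn's lemma yields a minimal element.

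I do not expect a genuine obstacle here; the only point that demands any care is the choice of an ultrafilter refining the order filter, which is what allows the tail estimate $\|\phi_\alpha(x)\|\leq\|\phi_\beta(x)\|$ to hold $\omega$-almost everywhere and converts it into a norm bound on the limit via WOT-lower semicontinuity of $\|\cdot\|$. Everything else (unitality, complete positivity, and $\Gamma$-equivariance of the WOT-limit) is a standard verification that already appears in Hamana's construction of the ordinary $\Gamma$-injective envelope.
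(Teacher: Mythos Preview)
Your proof is correct and follows essentially the same approach as the paper: apply Zorn's lemma by producing a lower bound for any chain via a point-weak limit, using compactness of bounded balls and lower semicontinuity of the norm. The only cosmetic difference is that you use an ultrafilter refining the order filter where the paper extracts a point-weak* convergent subnet, and you spell out the verification that the limit lies in $\mathcal{G}$ whereas the paper simply asserts point-weak* compactness of $\mathcal{G}$.
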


\begin{proof}
We show that every decreasing net $(\phi_i)$ in $\mathcal G$ has a lower bound. Then the result follows from the Zorn's lemma.

To show this, first note that $\mathcal G$ is point-weak* compact.
Therefore, there is a subnet $(\phi_{i_j})$ and $\phi_0\in \mathcal G$
such that $\phi_{i_j}(x) \to \phi_0(x)$ in the weak* topology for all $x\in B(\cH_\pi)$.
We obviously have $\|\phi_0(x)\| \leq \limsup_{i_j} \|\phi_{i_j}(x)\| = \inf_i \|\phi_{i}(x)\|$
for all $x\in B(\cH_\pi)$.
\end{proof}

We denote by $\im(\phi)$ the image of $\phi\in\cG$.

\begin{prop}\label{min-image-properties}
Suppose $\phi_0$ is a minimal element of $\mathcal G$. Then 
\begin{enumerate}
\item[1.]
$\phi_0$ is an idempotent.
\item[2.]
(\emph{$\pi$-essentiality}:) Every $\Gamma$-map $\psi : \im(\phi_0) \to B(\cH_\pi)$ is isometric.
\item[3.]
$\im(\phi_0) \subseteq B(\cH_\pi)$ is minimal among subspaces of $B(\cH_\pi)$ that are images of $\Gamma$-projections. 
\item[4.]
(\emph{$\pi$-rigidity}:) The identity map is the unique $\Gamma$-map on $\im(\phi_0)$.
\item[5.]
(\emph{$\pi$-injectivity}:)
If $X \subseteq Y$ are $\G$-invariant subspaces of $B(\cH_\pi)$ and $\psi: B(\cH_\pi) \to B(\cH_\pi)$ is a $\G$-map such that $\psi(X) \subseteq \im(\phi_0)$, then there is a $\G$-map $\tilde{\psi} : B(\cH_\pi) \to B(\cH_\pi)$ such that $\tilde{\psi}(Y) \subseteq \im(\phi_0)$ and $\tilde{\psi}|_X = \psi|_X$.
\end{enumerate}
\end{prop}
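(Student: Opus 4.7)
I prove the five properties in the order $(1), (2), (5), (3), (4)$. Once the idempotence (1) is in hand, the statements (2), (3), and (5) follow by short arguments of the form ``compose with $\phi_0$'' together with minimality, while (4) is the second delicate step.

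\textbf{Idempotence (1).} Since $\phi_0$ is u.c.p.\ and hence contractive, $\phi_0^2\in\mathcal G$ with $\phi_0^2\leq\phi_0$; minimality then immediately yields $\|\phi_0^2(x)\|=\|\phi_0(x)\|$ for every $x\in B(\cH_\pi)$. The main obstacle is to upgrade this norm equality to the operator identity $\phi_0^2=\phi_0$. The plan is to examine the semigroup generated by $\phi_0$---all iterates $\phi_0^n$ and their convex combinations lie in $\mathcal G$, are dominated by $\phi_0$, and so share its norm profile---and to use point-weak$^*$ compactness of $\mathcal G$ to extract a cluster point of the Ces\`aro averages $A_n:=\tfrac{1}{n}\sum_{k=1}^{n}\phi_0^k$. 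The telescoping $\phi_0\circ A_n-A_n=\tfrac{1}{n}(\phi_0^{n+1}-\phi_0)$ is $O(1/n)$ in operator norm at each point, so any cluster point $\Phi$ should satisfy $\phi_0\circ\Phi=\Phi$; combining this with the minimality constraint and a Schwarz-type / convexity argument ought to identify $\Phi$ with both $\phi_0$ and $\phi_0^2$, forcing $\phi_0^2=\phi_0$.

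\textbf{Essentiality (2), minimality of $\im(\phi_0)$ (3), and injectivity (5).} For (2), given a $\Gamma$-map $\psi:\im(\phi_0)\to B(\cH_\pi)$, the composition $\psi\circ\phi_0\in\mathcal G$ is dominated by $\phi_0$, so minimality yields $\|\psi(\phi_0(x))\|=\|\phi_0(x)\|$; since (1) gives $y=\phi_0(y)$ for $y\in\im(\phi_0)$, it follows that $\|\psi(y)\|=\|y\|$. For (3), let $\cV\subseteq\im(\phi_0)$ be the image of a $\Gamma$-projection $\rho$; applying (2) to $\rho|_{\im(\phi_0)}:\im(\phi_0)\to B(\cH_\pi)$, which is a $\Gamma$-map and hence isometric, we compute for $y\in\im(\phi_0)$ that $\rho(y-\rho(y))=\rho(y)-\rho^2(y)=0$, so by isometry $y=\rho(y)\in\cV$, proving $\cV=\im(\phi_0)$. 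For (5), set $\tilde\psi:=\phi_0\circ\psi$; this is a $\Gamma$-map, $\tilde\psi(Y)\subseteq\im(\phi_0)$ by construction, and the hypothesis $\psi(X)\subseteq\im(\phi_0)=\fix(\phi_0)$ gives $\tilde\psi|_X=\psi|_X$.

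\textbf{Rigidity (4).} Let $\psi:\im(\phi_0)\to\im(\phi_0)$ be a $\Gamma$-map; by (2) it is already isometric. The plan is to consider the averages $B_n:=\tfrac{1}{n}\sum_{k=0}^{n-1}\psi^k\circ\phi_0:B(\cH_\pi)\to\im(\phi_0)$, all of which lie in $\mathcal G$ and are dominated by $\phi_0$, and to extract a point-weak$^*$ cluster point $\Psi$. The telescoping estimate $\psi\circ B_n-B_n=\tfrac{1}{n}(\psi^n-\id)\circ\phi_0$ is $O(1/n)$ at each point, which should give $\psi\circ\Psi=\Psi$, i.e.\ $\im(\Psi)\subseteq\fix(\psi)\subseteq\im(\phi_0)$. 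Similar computations yield $\Psi\circ\phi_0=\Psi$ and $\Psi|_{\fix(\psi)}=\id$, so $\Psi$ is a $\Gamma$-projection onto $\im(\Psi)$; applying (3) to $\cV:=\im(\Psi)$ then forces $\im(\Psi)=\im(\phi_0)$, whence $\fix(\psi)=\im(\phi_0)$ and $\psi=\id$. Carefully controlling the point-weak$^*$ limits here (since u.c.p.\ maps are not generally weak$^*$-continuous) is the second main obstacle.
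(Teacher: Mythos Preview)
Your arguments for (2), (3), and (5) are correct and essentially identical to the paper's. The problems lie in (1) and (4), where you route through point-weak$^*$ cluster points and run into precisely the obstacle you flag at the end but never resolve.

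For (1), you already have everything needed to finish \emph{without} any limit. The Ces\`aro averages $A_n=\tfrac{1}{n}\sum_{k=1}^n\phi_0^k$ lie in $\mathcal G$ and are dominated by $\phi_0$, so minimality gives the \emph{equality} $\|A_n(y)\|=\|\phi_0(y)\|$ for every $y$; now apply this with $y=x-\phi_0(x)$ and use your telescoping identity:
\[
\|\phi_0(x)-\phi_0^2(x)\|=\|\phi_0(x-\phi_0(x))\|=\|A_n(x-\phi_0(x))\|=\tfrac{1}{n}\|\phi_0(x)-\phi_0^{n+1}(x)\|\le\tfrac{2}{n}\|x\|.
\]
This is the paper's argument. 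Your detour---extracting a cluster point $\Phi$ of $(A_n)$ and asserting $\phi_0\circ\Phi=\Phi$---does not work as stated: from $\phi_0 A_{n_j}(x)-A_{n_j}(x)\to 0$ in norm and $A_{n_j}(x)\to\Phi(x)$ weak$^*$ you get $\phi_0(A_{n_j}(x))\to\Phi(x)$ weak$^*$, but concluding $\phi_0(A_{n_j}(x))\to\phi_0(\Phi(x))$ requires $\phi_0$ to be weak$^*$-continuous, which a general u.c.p.\ map on $B(\cH_\pi)$ need not be. The subsequent ``Schwarz-type/convexity argument identifying $\Phi$ with both $\phi_0$ and $\phi_0^2$'' is not supplied and it is unclear what it would be.

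For (4) the same gap recurs: you cannot pass from $\psi\circ B_n-B_n\to 0$ and $B_{n_j}\to\Psi$ point-weak$^*$ to $\psi\circ\Psi=\Psi$ without normality of $\psi$, and everything afterwards (idempotence of $\Psi$, hence applicability of (3)) hinges on that conclusion. The paper avoids this entirely by recycling (1): since $\psi$ is isometric by (2), the map $\psi\circ\phi_0\in\mathcal G$ satisfies $\|\psi\phi_0(x)\|=\|\phi_0(x)\|$, so $\psi\circ\phi_0$ is itself minimal in $\mathcal G$ and therefore, by (1), idempotent. Then
\[
\|\phi_0(x)-\psi\phi_0(x)\|=\|\psi(\phi_0(x)-\psi\phi_0(x))\|=\|\psi\phi_0(x)-(\psi\phi_0)^2(x)\|=0,
\]
so $\psi=\id$ on $\im(\phi_0)$. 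No averaging, no cluster points, no weak$^*$ issues.
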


\begin{proof}
1. \ Let $\phi^{(n)} := \frac{1}{n} \sum_{k=1}^n \phi_0^k$. By minimality of $\phi_0$ we have $\|\phi^{(n)}(x)\| = \|\phi_0(x)\|$ for all $x\in B(\cH_\pi)$. Then for any $x \in B(\cH_\pi)$ and $n \in \mathbb N$,
\begin{equation*}
\begin{split}
\|\phi_0(x) - \phi_0^2(x)\| &= \|\phi_0(x - \phi_0(x))\| = \|\phi^{(n)}(x-\phi_0(x))\| \\ 
	&= \tfrac{1}{n}\|\phi_0(x)-\phi_0^{n+1}(x)\| \leq \tfrac{2}{n} \|x\|.
\end{split}
\end{equation*}
Since the latter can be made arbitrarily small for any $x \in B(\cH_\pi)$, $\phi_0=\phi_0^2$.

\noindent
2.\ Suppose $\psi : \im(\phi_0) \to B(\cH_\pi)$ is a $\Gamma$-map.
Then by minimality of $\phi_0$, we have $\|x\| = \|\phi_0(x)\| \leq \|\psi\phi_0(x)\|=\|\psi(x)\|$ for all $x\in \im(\phi_0)$.

\noindent
3.\ Suppose $\Phi: B(\cH_\pi) \to B(\cH_\pi)$ is a $\G$-idempotent such that $\im(\Phi) \subseteq \im(\phi_0)$. Then $\Phi|_{\im(\phi_0)}$ is isometric by (2), so for any $x \in B(\cH_\pi)$,
\[
\| \phi_0(x)-\Phi(\phi_0(x))\| = \|\Phi(\phi_0(x)-\Phi(\phi_0(x)))\|=0.
\]
Hence $\phi_0(x) \in \im(\Phi)$ for any $x \in B(\cH_\pi)$, so $\im(\Phi)=\im(\phi_0)$.

\noindent
4.\ Suppose $\Psi: \im(\phi_0) \to \im(\phi_0)$ is a $\G$-map. By (2), $\Psi$ is isometric. For any $x \in B(\cH_\pi)$, $\|\Psi \circ \phi_0(x)\| \leq \|\phi_0(x)\|$. By minimality of $\phi_0$, $\|\Psi \circ \phi_0(\cdot)\| = \|\phi_0(\cdot)\|$. Thus $\Psi \circ \phi_0$ is a minimal element of $\mathcal G$, and so by (1), $\Psi \circ \phi_0$ is idempotent. So for any $x \in B(\cH_\pi)$,
\[\begin{split}
\|\phi_0(x)-\Psi \circ \phi_0(x)\| &= \|\Psi (\phi_0(x)-\Psi \circ \phi_0(x))\| \\&= \|\Psi \circ \phi_0(x)-(\Psi \circ \phi_0)^2(x)\| = 0.
\end{split}\]
Hence $\Psi \circ \phi_0 = \phi_0$, i.e., $\Psi = \mathrm{id}$.

\noindent
5.\ The map $\tilde{\psi} = \phi_0 \circ \psi$ meets the requirements.
\end{proof}

\begin{prop}
The image of a minimal element of $\cG$ is unique up to isomorphism.
\end{prop}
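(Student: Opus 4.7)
The plan is to adapt the classical Hamana-style uniqueness argument for injective envelopes to the present relative setting, using the properties already established for a minimal element. Let $\phi_0$ and $\phi_0'$ be two minimal elements of $\cG$, with images $\cV_0 := \im(\phi_0)$ and $\cV_1 := \im(\phi_0')$. I want to build mutually inverse $\Gamma$-isomorphisms between $\cV_0$ and $\cV_1$ from the minimal idempotents themselves, then invoke $\pi$-rigidity to recognize their compositions as identities.

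Concretely, I would first define $\alpha := \phi_0'|_{\cV_0} : \cV_0 \to \cV_1$ and $\beta := \phi_0|_{\cV_1} : \cV_1 \to \cV_0$; these land in the claimed targets because $\im(\phi_0') = \cV_1$ and $\im(\phi_0) = \cV_0$, and each is a restriction of a u.c.p.\ $\Gamma$-equivariant map, hence itself a $\Gamma$-map. By $\pi$-essentiality (item 2 of the preceding proposition), both $\alpha$ and $\beta$ are isometric (and completely so, by the same argument applied at all matrix levels). Now $\beta \circ \alpha : \cV_0 \to \cV_0$ is a $\Gamma$-map on $\cV_0$, so by $\pi$-rigidity (item 4) it equals $\id_{\cV_0}$; symmetrically $\alpha \circ \beta = \id_{\cV_1}$.

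Consequently $\alpha$ is a u.c.p.\ $\Gamma$-equivariant bijection whose inverse $\beta$ is also u.c.p., so $\alpha$ is a complete order isomorphism and in particular a completely isometric $\Gamma$-isomorphism of operator systems. Equipping $\cV_0$ and $\cV_1$ with the $C^*$-algebra structures given by the Choi--Effros products coming from $\phi_0$ and $\phi_0'$ respectively (see Section~\ref{sec:G-inj-envelope}), and using the fact (recalled in the preliminaries) that any bijective completely isometric map between $C^*$-algebras is automatically a $*$-isomorphism, I conclude that $\alpha$ is a $\Gamma$-equivariant $*$-isomorphism $\cV_0 \cong \cV_1$.

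I do not anticipate a genuine obstacle here: the argument is entirely formal once the structural properties in Proposition~\ref{min-image-properties} are in hand. The only point demanding a little care is verifying that the restrictions $\alpha$ and $\beta$ take values in $\cV_1$ and $\cV_0$ respectively, so that the compositions $\beta \circ \alpha$ and $\alpha \circ \beta$ genuinely are endomorphisms to which $\pi$-rigidity applies; this is immediate from the definitions of the images. If desired, one could also record that the $C^*$-algebra structure on the image therefore does not depend on the choice of the minimal $\phi_0$, justifying the forthcoming notation $\cB_\pi$.
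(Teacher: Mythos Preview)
Your proof is correct and follows essentially the same approach as the paper: restrict each minimal idempotent to the image of the other and use $\pi$-rigidity (Proposition~\ref{min-image-properties}(4)) to conclude that the two compositions are identities, hence the restrictions are mutually inverse $\Gamma$-isomorphisms. You spell out a bit more detail (essentiality, the Choi--Effros product) than the paper does, but the argument is the same.
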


\begin{proof}
Suppose $\phi, \psi:B(\cH_\pi) \to B(\cH_\pi)$ are both minimal elements of $\cG$.
Then by $\pi$-rigidity (4.\ in Proposition \ref{min-image-properties}), the restriction of the composition $\phi \circ \psi$ to $\im(\phi)$ is the identity map. Similarly, the restriction of $\psi \circ \phi$ to $\im(\psi)$ is the identity map. Hence $\psi|_{\im(\phi)}: \im(\phi) \to \im(\psi)$ is a $\Gamma$-isomorphism.
\end{proof}

\begin{defn}
Let $\phi$ be a minimal element of $\mathcal G$, then $\im(\phi)$ equipped with the Choi-Effros product is a $\Gamma$-$C^*$-algebra, which we call the {\it Furstenberg--Hamana boundary} (or FH-boundary) of the representation $\pi$, and will denote by $\cB_\pi$.
\end{defn}

Before getting into properties of the boundary $\cB_\pi$, let us give a few examples.


\begin{example}
We have $\cB_{\lambda_\Gamma} = C(\partial_F \G)$.
\end{example}

\begin{example}
Let $\Gamma\act (X, \nu)$ be a probability measure preserving action, and let $\kappa:\G\to B(L^2(X, \nu))$ be the corresponding Koopman representation. The vector functional associated to the vector $\mathds{1}_X\in L^2(X, \nu)$ is an invariant state on $B(L^2(X, \nu))$, hence $\cB_\kappa$ is trivial.
\end{example}

\begin{example}\label{ex:koop-dis}
Suppose $\G$ acts on a discrete set $S$, considered as a measure space with the counting measure. Let $\lambda_S : \G \to B(\ell^2 (S))$ be the corresponding Koopman representation $\lambda_S(g)(\delta_s) = \delta_{gs}$ for $g \in \G$, $s \in S$. Since the map $\Phi: B(\ell^2 (S)) \to \ell^\infty(S)$, $\Phi(T)(x) = \langle T \delta_x, \delta_x \rangle$, is a $\G$-projection, it follows that $\cB_{\lambda_S}$ may also be characterized as the image of a minimal $\G$-projection on $\ell^\infty(S)$. Hence there is a compact $\G$-space $\partial(\G,S)$ such that $\cB_{\lambda_S} = C(\partial(\G,S))$. 
Observe that $\partial(\G,S)$ is trivial if and only if the action is amenable in the sense of Greenleaf, i.e., there is an invariant mean on $\ell^\infty(S)$.
On the opposite end, we have 
$\partial(\G,S) = \partial_F \G$ if and only if 
$\text{Stab}_\G(s)$ is amenable for each $s \in S$.

To see this, first assume $\partial(\G,S) = \partial_F \G$. Then the inclusion $C(\partial_F \G) \subseteq \ell^\infty(S)$ yields a $\Gamma$-map $\phi:S\to\pr(\partial_F \G)$. Let $\nu\in \pr(\partial_F \G)$, and let $E: \ell^\infty(\Gamma)\to C(\partial_F \G)$ be a $\Gamma$-map. Then $\nu\circ E$ is a $\stab_\Gamma(\nu)$-invariant mean on $\ell^\infty(\Gamma)$, which implies $\stab_\Gamma(\nu)$ is amenable. Hence, for any $s\in S$, we have $\stab_\Gamma(s)\subseteq \stab_\Gamma(\phi(s))$ is amenable. 
Conversely, suppose the point stabilizers of the action $\Gamma\act S$ are all amenable. Then for each $s\in S$ there is a $\Gamma$-map $\phi_s: \ell^\infty(\G) \to\ell^\infty(\Gamma / \stab_\G(s))$. Now let $\{x_i\}$ be a set of representatives of orbit-equivalence classes 
of the action $\Gamma\act S$. Then 

the map 
$\psi=\oplus_i \phi_{x_i} : \ell^\infty(\Gamma)\to \oplus_i \ell^\infty(\Gamma/\stab_\G(x_i)) = \oplus_i \ell^\infty(\Gamma x_i) = \ell^\infty(S)$
is a $\Gamma$-map.
Let $\Phi: \ell^\infty(S) \to C(\partial(\G,S))$ be a $\G$-projection. Then by $\G$-essentiality, $\Phi \circ \psi$ restricts a $\G$-embedding of $C(\partial_F \G)$ into $C(\partial(\G,S))$. By minimality of the latter, this embedding is surjective.

\end{example}

\begin{example}
Let $\Gamma=\bF_n$ be the free group on $n$ generators, $n\geq 2$. Let $\rho_\G:\G\to B(\ell^2(\G))$ be the right regular representation. Let $\pi:\G\to B(\ell^2(\G))$ be the unitary representation $\pi(g):=\lambda_\G(g)\rho_\G(g)$. Observe that the Dirac function $\delta_e$ at the neutral element $e\in\G$ is an invariant vector for $\pi$, thus its corresponding vector functional is an invariant state on $B(\ell^2(\G))$. Hence $\cB_\pi=\bC$ is trivial.

Now, let $\pi_0:\G\to B(\ell^2(\G\setminus\{e\}))$ be the unitary representation obtained from restricting $\pi$ to $\ell^2(\G\setminus\{e\})$, which is the orthogonal complement of the space of invariant vectors of $\pi$. The representation $\pi_0$ is in fact the Koopman representation associated to the action $\G\act S=\G\setminus\{e\}$ by conjugations, which is also unitarily equivalent to the direct sum of quasi-regular representations associated to stabilizer subgroups. Note that for $g\in S$, the stabilizer of $g$ in $\G$ is the centralizer $C_\G(g)$ of $g$ in $\G$, hence amenable. Thus, as seen in Example \ref{ex:koop-dis}, we have $\cB_{\pi_0} = C(\partial_F\G)$.
\end{example}

\begin{example}
Recall that a non-singular action $\Gamma\act (X, \nu)$ on a probability space $(X, \nu)$ is amenable (in the sense of Zimmer) if and only if $L^\infty(X, \nu)$ is $\Gamma$-injective. In this case there is an equivariant embedding $C(\partial_F\Gamma)\subseteq L^\infty(X, \nu)$, and hence also an equivariant projection $L^\infty(X, \nu) \to C(\partial_F\Gamma)$. Hence, by rigidity we have $\cB_\kappa = C(\partial_F\Gamma)$, where $\kappa$ is the Koopman representation associated to the action $\Gamma\act (X, \nu)$.
\end{example}



\subsection{General properties and various constructions}
In this section we study general properties of $\cB_\pi$, especially in regards to various notions and constructions in representation theory, e.g., tensor products, restrictions, inductions and weak containment. These results in particular show that the definition is natural.

\subsubsection{Boundary triviality}
One expects that triviality of the boundary should be equivalent to some amenability property. In fact, the following is obvious from the definitions.

\begin{prop}\label{triv-bnd<-->amen}
Let $\pi$ be a unitary representation of $\Gamma$. Then $\cB_\pi = \bC$ if and only if $\pi$ is an amenable representation.
\end{prop}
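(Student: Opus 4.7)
The plan is to translate between two objects: $\Gamma$-invariant states on $B(\cH_\pi)$, and minimal $\Gamma$-maps in $\cG$ whose image is $\cB_\pi$. Since conjugation by unitaries fixes scalar multiples of the identity, $\Gamma$ acts trivially on $\bC\cdot 1 \subseteq B(\cH_\pi)$, so a $\Gamma$-projection $B(\cH_\pi) \to \bC$ in the sense of the amenability definition is nothing but a $\Gamma$-invariant state on $B(\cH_\pi)$. The goal is to show this exists if and only if $\cB_\pi = \bC$.

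For the forward direction, I would assume $\cB_\pi = \bC$ and let $\phi_0$ be a minimal element of $\cG$, so $\im(\phi_0) = \bC \cdot 1$. Then $\phi_0(x) = \omega(x)\cdot 1$ for a u.c.p.\ (hence state-valued) linear functional $\omega : B(\cH_\pi) \to \bC$. The $\Gamma$-equivariance of $\phi_0$, combined with the trivial $\Gamma$-action on $\bC\cdot 1$, forces $\omega(g\cdot x) = \omega(x)$ for all $g\in\Gamma$ and $x \in B(\cH_\pi)$. Thus $\omega$ is a $\Gamma$-invariant state, so $\pi$ is amenable.

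For the converse, I would assume the existence of a $\Gamma$-projection $\omega : B(\cH_\pi) \to \bC$, and compose it with the (unital and $\Gamma$-equivariant) inclusion $j : \bC \hookrightarrow B(\cH_\pi)$ sending $z \mapsto z\cdot 1$, obtaining a $\Gamma$-map $\widetilde\omega := j\circ\omega : B(\cH_\pi) \to B(\cH_\pi)$ whose image lies in $\bC\cdot 1$. Restricting to $\cB_\pi$ yields a $\Gamma$-map $\widetilde\omega|_{\cB_\pi} : \cB_\pi \to B(\cH_\pi)$. By $\pi$-essentiality (part 2 of Proposition \ref{min-image-properties}) this restriction must be isometric. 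But its image is contained in $\bC\cdot 1$, and $\cB_\pi$ contains the unit $1$, so $\cB_\pi$ is isometrically embedded in the one-dimensional space $\bC\cdot 1$, forcing $\cB_\pi = \bC$.

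The statement is immediate once the machinery of Proposition \ref{min-image-properties} is in place, so there is no serious obstacle. The only point worth flagging is that the converse direction uses essentiality (part 2) rather than injectivity (part 5): we do not try to lift $\omega$ through $\phi_0$, but instead exploit the fact that any $\Gamma$-map out of $\cB_\pi$ into the ambient $B(\cH_\pi)$ is automatically isometric, which is the cleanest way to conclude that $\cB_\pi$ collapses to $\bC$.
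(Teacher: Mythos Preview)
Your proof is correct. The paper itself gives no argument, stating only that the proposition ``is obvious from the definitions,'' and your write-up correctly unpacks that remark: in the forward direction a minimal $\Gamma$-projection onto $\bC\cdot 1$ is precisely a $\Gamma$-invariant state, and in the converse direction you invoke $\pi$-essentiality (Proposition~\ref{min-image-properties}(2)) to force $\cB_\pi$ to collapse to $\bC$, which is the cleanest way to make the ``obvious'' direction rigorous.
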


\begin{example}
By a result of Bekka and Valette \cite[Theorem 1]{Bekka-Valette} if $\Gamma$ has property~T, then every amenable representation contains a finite dimensional subrepresentation. Therefore, it follows from Proposition \ref{triv-bnd<-->amen} that $\cB_\pi$ is non-trivial for any irreducible infinite dimensional representation $\pi$ of $\Gamma$.
\end{example}

\begin{lemma}\label{stab-rel-amen}
Let $\pi$ be a unitary representation of $\Gamma$, let $\rho$ be a state on $\cB_\pi$, and let $\Lambda = \{g\in\Gamma : g\rho=\rho\}$ be the stabilizer subgroup of $\rho$. Then the restriction of $\pi$ to $\Lambda$ is an amenable representation of $\Lambda$.
\end{lemma}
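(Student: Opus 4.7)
The plan is to pull back the state $\rho$ via the $\Gamma$-projection onto $\cB_\pi$ and observe that the resulting state on $B(\cH_\pi)$ is $\Lambda$-invariant, which is exactly what amenability of $\pi|_\Lambda$ requires.

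More concretely, let $\phi_0:B(\cH_\pi)\to B(\cH_\pi)$ be a minimal $\Gamma$-map, so that $\im(\phi_0)=\cB_\pi$ and $\phi_0$ is an idempotent $\Gamma$-map in the sense of Proposition~\ref{min-image-properties}. Define $\tilde\rho=\rho\circ\phi_0:B(\cH_\pi)\to\bC$. Since $\rho$ and $\phi_0$ are both unital and (completely) positive, $\tilde\rho$ is a state on $B(\cH_\pi)$.

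I would then check $\Lambda$-invariance directly: for any $g\in\Lambda$ and $a\in B(\cH_\pi)$,
\[
\tilde\rho(g^{-1}\!\cdot\! a)=\rho\bigl(\phi_0(g^{-1}\!\cdot\! a)\bigr)=\rho\bigl(g^{-1}\!\cdot\!\phi_0(a)\bigr)=(g\rho)\bigl(\phi_0(a)\bigr)=\rho\bigl(\phi_0(a)\bigr)=\tilde\rho(a),
\]
using $\Gamma$-equivariance of $\phi_0$ in the second equality and the defining property $g\rho=\rho$ of $g\in\Lambda$ in the fourth. Thus $\tilde\rho$ is a state on $B(\cH_\pi)$ invariant under $\Ad_{\pi(g)}$ for all $g\in\Lambda$; equivalently, $\tilde\rho$ is a $\Lambda$-projection $B(\cH_\pi)\to\bC$ with respect to the restricted representation $\pi|_\Lambda$, so $\pi|_\Lambda$ is amenable by definition.

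There is no real obstacle here: the argument is just a two-line verification once one has the correct setup, and the only subtle point is keeping the $\Gamma$-action conventions on states straight (i.e.\ $(g\rho)(a)=\rho(g^{-1}a)$ matches $\Gamma$-equivariance of $\phi_0$ correctly). So the ``hard part'' is purely notational; the conceptual content is that any state on the boundary lifts to an invariant mean for its stabilizer, which is immediate from the existence and equivariance of the projection onto $\cB_\pi$.
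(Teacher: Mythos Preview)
Your proof is correct and is essentially identical to the paper's own argument: the paper simply takes a $\Gamma$-projection $\psi:B(\cH_\pi)\to\cB_\pi$ and observes that $\rho\circ\psi$ is a $\Lambda$-invariant state on $B(\cH_\pi)$. You have merely written out the $\Lambda$-invariance verification explicitly.
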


\begin{proof}
Let $\psi: B(\cH_\pi) \to \cB_\pi$ be a $\Gamma$-projection. Then $\rho\circ\psi$ is a $\Lambda$-invariant state on $B(\cH_\pi)$.
\end{proof}

\begin{cor}
Let $\pi$ be a unitary representation of $\Gamma$, and let $\Lambda\leq \Gamma$ be the kernel of the action $\Gamma\act\cB_\pi$. Then the restriction of $\pi$ to $\Lambda$ is amenable as a representation of $\Lambda$.
\end{cor}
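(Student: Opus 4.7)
The plan is to reduce the corollary directly to Lemma \ref{stab-rel-amen} by observing that the kernel $\Lambda$ of the action $\Gamma \act \cB_\pi$ sits inside the stabilizer of every state on $\cB_\pi$. So first I would pick any state $\rho$ on $\cB_\pi$ (such states exist since $\cB_\pi$ is a unital $C^*$-algebra). If $g\in\Lambda$, then $g$ acts trivially on $\cB_\pi$, hence $(g\rho)(b)=\rho(g^{-1}b)=\rho(b)$ for all $b\in\cB_\pi$, so $g\in\stab_\Gamma(\rho)$. This gives the inclusion $\Lambda\subseteq\stab_\Gamma(\rho)$.

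Next I would invoke Lemma \ref{stab-rel-amen} with this $\rho$ to conclude that $\pi|_{\stab_\Gamma(\rho)}$ is an amenable representation of $\stab_\Gamma(\rho)$. Unpacking the definition, this yields a $\stab_\Gamma(\rho)$-invariant state $\phi$ on $B(\cH_\pi)$, where $\stab_\Gamma(\rho)$ acts via $\Ad_{\pi(g)}$.

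Finally I would observe that amenability of a representation restricts to subgroups: the very same state $\phi$ is automatically $\Lambda$-invariant, since $\Lambda\subseteq\stab_\Gamma(\rho)$ and $\phi$ is invariant under a larger group. Hence $\phi$ witnesses amenability of $\pi|_\Lambda$ as a representation of $\Lambda$, completing the proof.

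There is essentially no obstacle here; the statement is a direct corollary packaging the lemma together with the trivial observation that the kernel of an action on an operator system is contained in the stabilizer of any state.
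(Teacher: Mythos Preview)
Your proof is correct and is exactly the intended derivation: the paper states the corollary immediately after Lemma~\ref{stab-rel-amen} without proof, and your argument---picking any state $\rho$ on $\cB_\pi$, noting $\Lambda\subseteq\stab_\Gamma(\rho)$, and applying the lemma---is the natural way to fill in the gap.
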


We will study in more details the kernel of the FH-boundary action, but we record the following first observation here, which is another fact that one expects for a natural notion of boundary action.

\begin{prop} \label{finconjclass-trivialaction}
Every $g\in\Gamma$ with finite conjugacy class acts trivially on $\cB_\pi$ for any unitary representation $\pi$ of $\Gamma$.
\end{prop}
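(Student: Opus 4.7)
The plan is in two steps. First, I average over the conjugacy class to get an ``average'' identity on $\cB_\pi$ via $\pi$-rigidity; then I use that $\cB_\pi$ is a $C^{*}$-algebra to upgrade this to the desired pointwise identity $g\cdot x = x$.

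Let $\{g_1,\ldots,g_n\}$ be the conjugacy class of $g$, and define
\[
\Psi : B(\cH_\pi)\to B(\cH_\pi), \qquad \Psi(x) = \frac{1}{n}\sum_{i=1}^{n} \pi(g_i)\, x\, \pi(g_i)^{*}.
\]
This is u.c.p., and since conjugation by any $h\in\Gamma$ permutes $\{g_1,\ldots,g_n\}$, a direct calculation gives
\[
h\cdot\Psi(x) = \frac{1}{n}\sum_i (hg_ih^{-1})\cdot(h\cdot x) = \Psi(h\cdot x),
\]
so $\Psi$ is a $\Gamma$-map. Because $\cB_\pi \subseteq B(\cH_\pi)$ is $\Gamma$-invariant, $\Psi$ restricts to a $\Gamma$-map $\cB_\pi\to\cB_\pi$, and $\pi$-rigidity (Proposition~\ref{min-image-properties}(4)) forces $\Psi|_{\cB_\pi} = \id$. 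Equivalently, $\sum_{i=1}^{n} g_i\cdot x = nx$ in $\cB_\pi$ for every $x\in\cB_\pi$.

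For the second step, recall that $\cB_\pi$ is a $C^{*}$-algebra under the Choi--Effros product and that each $g_i\cdot$ acts on it as a $*$-automorphism. For any $x\in\cB_\pi$, expand
\begin{align*}
\sum_{i=1}^{n}(g_i\cdot x - x)^{*}(g_i\cdot x - x)
&= \sum_{i} g_i\cdot(x^{*}x) - \Bigl(\sum_{i} g_i\cdot x\Bigr)^{*} x - x^{*}\Bigl(\sum_{i} g_i\cdot x\Bigr) + nx^{*}x \\
&= nx^{*}x - nx^{*}x - nx^{*}x + nx^{*}x = 0,
\end{align*}
using $*$-automorphy of each $g_i\cdot$ together with the averaged identity from the first step. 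Each summand on the left is a positive element of $\cB_\pi$, so a sum of positives equalling zero forces each to vanish; hence $g_i\cdot x = x$ for all $i$, and in particular $g\cdot x = x$.

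The only non-routine step is the ``completing the square'' computation that promotes the average equality to a pointwise equality. It is short once one notices that writing out $\sum_i (g_i\cdot x - x)^{*}(g_i\cdot x - x)$ trivializes via the rigidity conclusion, and it crucially uses the $C^{*}$-algebra structure supplied by the Choi--Effros product; I do not anticipate any further obstacle.
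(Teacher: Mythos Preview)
Your proof is correct and shares its first step with the paper: averaging over the conjugacy class gives a $\Gamma$-map on $\cB_\pi$, which equals the identity by $\pi$-rigidity. The approaches diverge in the second step. The paper argues via extreme points: since projections are extreme in the positive part of the unit ball, a convex combination of projections summing to a projection forces all terms equal, so $g_i\cdot p = p$ for every projection $p\in\cB_\pi$; it then invokes the fact that an injective $C^*$-algebra is generated by its projections to conclude. Your completing-the-square identity
\[
\sum_i (g_i\cdot x - x)^{*}(g_i\cdot x - x)=0
\]
is more direct: it works for arbitrary elements at once and uses only the $C^*$-algebra structure and the $*$-automorphism property of the action, avoiding both the extreme-point argument and the structural input that injective $C^*$-algebras have enough projections. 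The paper's route is perhaps more conceptual, but yours is shorter and has fewer prerequisites.
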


\begin{proof}
Let $F\subseteq \Gamma$ be the conjugacy class of $g$. Then $\frac{1}{\#F}\sum_{h\in F}\mathrm{Ad}_{\pi(h)}$ is a $\Gamma$-map on $\cB_\pi$, hence the identity by $\Gamma$-rigidity. Since projections are extreme points of the positive unital ball, it follows that $\pi(h) p \pi(h^{-1}) = p$ for every $h\in F$ and every projection $p\in \cB_\pi$. Since $\cB_\pi$ is an injective $C^*$-algebra, it is generated by its projections. Hence $\mathrm{Ad}_{\pi(h)}$ is the identity map on $\cB_\pi$ for every $h\in F$.
\end{proof}

\subsubsection{Weak containment and weak equivalence}

\begin{prop}\label{prop:weak-contmnt-->map-bnds}
Let $\pi$ and $\sigma$ be two unitary representations 
of $\Gamma$. Then there is a $\Gamma$-map 
from $\cB_\sigma$ to $\cB_\pi$ iff
there is a $\Gamma$-map 
from $B(\cH_\sigma)$ to $B(\cH_\pi)$.
In particular, if $\pi$ is weakly contained in $\sigma$, 
then there is a $\Gamma$-map 
$\cB_\sigma\to\cB_\pi$.
\end{prop}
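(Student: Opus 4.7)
The proof splits into the iff equivalence and the ``In particular'' consequence.

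The iff is essentially formal, following from the existence of the minimal $\Gamma$-projections $\phi_\sigma:B(\cH_\sigma)\to\cB_\sigma$ and $\phi_\pi:B(\cH_\pi)\to\cB_\pi$ that define the FH-boundaries. For the $(\Rightarrow)$ direction, given a $\Gamma$-map $\beta:\cB_\sigma\to\cB_\pi$, the composition $\iota_\pi\circ\beta\circ\phi_\sigma:B(\cH_\sigma)\to B(\cH_\pi)$, where $\iota_\pi:\cB_\pi\hookrightarrow B(\cH_\pi)$ is the inclusion, is a $\Gamma$-map as each factor is. For the $(\Leftarrow)$ direction, given a $\Gamma$-map $\Psi:B(\cH_\sigma)\to B(\cH_\pi)$, I would first restrict to the $\Gamma$-invariant subspace $\cB_\sigma\subseteq B(\cH_\sigma)$ and then postcompose with $\phi_\pi$; the composition $\phi_\pi\circ\Psi|_{\cB_\sigma}:\cB_\sigma\to\cB_\pi$ is visibly a $\Gamma$-map.

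For the ``In particular'' part, the weak containment $\pi\prec\sigma$ gives a surjective $*$-homomorphism $\eta:C^*_\sigma(\Gamma)\to C^*_\pi(\Gamma)\subseteq B(\cH_\pi)$ with $\eta(\sigma(g))=\pi(g)$, and since $\eta$ intertwines the inner automorphism actions on both sides, it is $\Gamma$-equivariant. By the iff already established, it suffices to produce a $\Gamma$-map $B(\cH_\sigma)\to B(\cH_\pi)$ extending $\eta$.

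The main obstacle will be equivariantizing the Arveson extension. Using injectivity of $B(\cH_\pi)$ as an operator system yields a u.c.p. extension $\tilde{\eta}:B(\cH_\sigma)\to B(\cH_\pi)$, but this need not be $\Gamma$-equivariant, and since $B(\cH_\pi)$ fails to be $\Gamma$-injective in general and no scalar invariant mean is available for non-amenable $\Gamma$, one cannot simply average. My plan is to enlarge the target to a $\Gamma$-injective ambient space: equip $\ell^\infty(\Gamma, B(\cH_\pi))$ with the twisted action $(h\cdot f)(g)=\Ad_{\pi(h)}f(gh)$, which via $f\mapsto(g\mapsto\Ad_{\pi(g)}f(g))$ is $\Gamma$-isomorphic to the right-translation action and hence $\Gamma$-injective by Hamana's theorem. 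This allows me to extend $\eta$ to a $\Gamma$-equivariant $\Theta:B(\cH_\sigma)\to\ell^\infty(\Gamma,B(\cH_\pi))$ (whose restriction to $C^*_\sigma(\Gamma)$ is the constant function $g\mapsto\eta(a)$, by equivariance of $\eta$). The final step is to use the $\pi$-rigidity and minimality properties of $\phi_\pi$ from Proposition \ref{min-image-properties} together with the formal part of the iff to descend $\Theta|_{\cB_\sigma}$ to a $\Gamma$-map $\cB_\sigma\to\cB_\pi$; this descent, rather than the Arveson extension itself, is where I expect the real work to lie.
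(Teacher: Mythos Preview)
Your treatment of the iff is fine and matches the paper exactly.

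For the ``In particular'' part, you are making the problem much harder than it is, and in doing so you have introduced a genuine gap. The Arveson extension $\tilde\eta:B(\cH_\sigma)\to B(\cH_\pi)$ of the $*$-homomorphism $\eta:C^*_\sigma(\Gamma)\to C^*_\pi(\Gamma)$ is \emph{automatically} $\Gamma$-equivariant. The reason is that $\tilde\eta$ restricts to a $*$-homomorphism on $C^*_\sigma(\Gamma)$, so $C^*_\sigma(\Gamma)$ lies in the multiplicative domain of $\tilde\eta$; hence for every $x\in B(\cH_\sigma)$ and $g\in\Gamma$,
\[
\tilde\eta\big(\sigma(g)\,x\,\sigma(g)^{-1}\big)=\tilde\eta(\sigma(g))\,\tilde\eta(x)\,\tilde\eta(\sigma(g))^{-1}=\pi(g)\,\tilde\eta(x)\,\pi(g)^{-1}.
\]
This is exactly the paper's one-line argument. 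No averaging, no passage to $\ell^\infty(\Gamma,B(\cH_\pi))$, and no descent is needed.

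As for your proposed workaround: even granting a $\Gamma$-equivariant extension $\Theta:B(\cH_\sigma)\to\ell^\infty(\Gamma,B(\cH_\pi))$, you have not explained how to produce a $\Gamma$-map $\ell^\infty(\Gamma,B(\cH_\pi))\to\cB_\pi$ (or to $B(\cH_\pi)$). The $\pi$-injectivity of $\cB_\pi$ in Proposition~\ref{min-image-properties}(5) is stated only relative to $\Gamma$-invariant subspaces of $B(\cH_\pi)$, not of $\ell^\infty(\Gamma,B(\cH_\pi))$, and a $\Gamma$-equivariant conditional expectation from $\ell^\infty(\Gamma,B(\cH_\pi))$ onto the constant copy of $B(\cH_\pi)$ would amount to an invariant mean, which you rightly note is unavailable for non-amenable $\Gamma$. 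So the ``descent'' step you flag as the real work is not merely unfinished---it is obstructed. The multiplicative-domain observation removes the obstacle entirely.
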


\begin{proof}
The first part follows immediately from the fact that there are $\Gamma$-maps $\cB_\sigma\to B(\cH_\sigma) \to \cB_\sigma$, and similarly for $\pi$. If $\pi\prec\sigma$, then the canonical $*$-homomorphism $C^*_\sigma (\Gamma)\to C^*_\pi (\Gamma)$ extends to a u.c.p.\ map $B(\cH_\sigma)\to B(\cH_\pi)$ which is automatically $\Gamma$-equivariant since $C^*_\sigma (\Gamma)$ is in its multiplicative domain, and that implies the last assertion.
\end{proof}

\begin{cor} \label{cor:bndy in inj env}
The FH-boundary $\cB_\pi$ of a representation $\pi$ is determined by the weak-equivalence class of $\pi$.
\end{cor}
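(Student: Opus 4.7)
The plan is to deduce the corollary directly from Proposition \ref{prop:weak-contmnt-->map-bnds} together with the $\pi$-rigidity established in Proposition \ref{min-image-properties}(4). Two unitary representations $\pi$ and $\sigma$ are weakly equivalent precisely when $\pi \prec \sigma$ and $\sigma \prec \pi$, so the proposition produces $\Gamma$-maps
\[
\phi : \cB_\sigma \to \cB_\pi \qquad \text{and} \qquad \psi : \cB_\pi \to \cB_\sigma.
\]

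Next, I would compose them to obtain $\Gamma$-maps $\psi\circ\phi : \cB_\sigma \to \cB_\sigma$ and $\phi\circ\psi : \cB_\pi \to \cB_\pi$. Each $\cB_\pi$ and $\cB_\sigma$ is the image of a minimal element of the corresponding set $\cG$, so Proposition \ref{min-image-properties}(4) applies: the only $\Gamma$-map on $\cB_\pi$ (resp.\ $\cB_\sigma$) is the identity. Hence $\phi\circ\psi = \mathrm{id}_{\cB_\pi}$ and $\psi\circ\phi = \mathrm{id}_{\cB_\sigma}$, so $\phi$ and $\psi$ are mutually inverse $\Gamma$-isomorphisms of operator systems. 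Since they are bijective completely isometric maps between $C^*$-algebras (with the Choi--Effros product), they are automatically $*$-isomorphisms, giving a canonical $\Gamma$-$C^*$-isomorphism $\cB_\sigma \cong \cB_\pi$.

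There is essentially no obstacle here: the only subtlety is making sure rigidity is invoked on the right object. Rigidity is stated for $\Gamma$-maps $\im(\phi_0) \to \im(\phi_0)$, and the compositions $\phi\circ\psi$ and $\psi\circ\phi$ indeed land in the correct images, so the hypothesis is satisfied. The argument is short enough that I would present it in two or three lines.
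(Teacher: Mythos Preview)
Your proof is correct and follows essentially the same approach as the paper: invoke Proposition~\ref{prop:weak-contmnt-->map-bnds} to obtain $\Gamma$-maps in both directions, then use rigidity (Proposition~\ref{min-image-properties}(4)) to conclude the compositions are identities. Your additional remark that the resulting bijective completely isometric map is automatically a $*$-isomorphism is a welcome clarification, but the core argument is identical to the paper's.
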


\begin{proof}
By Proposition \ref{prop:weak-contmnt-->map-bnds}, if $\pi$ and $\sigma$ are weakly equivalent, then there are $\Gamma$-maps $\cB_\pi \to \cB_\sigma$ and $\cB_\sigma \to \cB_\pi$. By rigidity, the composition of these in either order is the identity. Thus $\cB_\pi \cong \cB_\sigma$.
\end{proof}

The following is another justification for the above fact.

\begin{prop} \label{prop:bndy in inj env}
Let $\pi:\Gamma\to \cU(\cH_\pi)$ be a unitary representation of $\Gamma$.
There is a $\Gamma$-inclusion 
$\cB_\pi \subseteq \cI(C^*_\pi (\Gamma))$. 
\end{prop}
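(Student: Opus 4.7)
The plan is to construct a $\Gamma$-equivariant u.c.p.\ map $T:B(\cH_\pi)\to\cI(C^*_\pi(\Gamma))$ whose restriction to $\cB_\pi$ gives the desired $\Gamma$-equivariant complete isometry $\cB_\pi\hookrightarrow\cI(C^*_\pi(\Gamma))$. The first step is to equip $\cI(C^*_\pi(\Gamma))$ with a $\Gamma$-action: by Hamana rigidity, the $\Gamma$-action $\mathrm{Ad}_{\pi(g)}$ on $C^*_\pi(\Gamma)$ extends uniquely to an action $g\mapsto\tilde g$ on $\cI(C^*_\pi(\Gamma))$ by $*$-automorphisms. The crucial observation is that this action is realized by inner conjugation inside $\cI(C^*_\pi(\Gamma))$: for all $g\in\Gamma$ and $a\in\cI(C^*_\pi(\Gamma))$,
\[
\tilde g(a) \;=\; \pi(g)\,a\,\pi(g^{-1}),
\]
where the product is taken in the $C^*$-algebra $\cI(C^*_\pi(\Gamma))$. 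This is because $\pi(g)$ is a unitary inside $\cI(C^*_\pi(\Gamma))$, so the right-hand side defines a $*$-automorphism of $\cI(C^*_\pi(\Gamma))$ extending $\mathrm{Ad}_{\pi(g)}|_{C^*_\pi(\Gamma)}$, and uniqueness of such an extension comes from rigidity.

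Next, by injectivity of $\cI(C^*_\pi(\Gamma))$ as an operator system, the inclusion $C^*_\pi(\Gamma)\hookrightarrow\cI(C^*_\pi(\Gamma))$ extends to a u.c.p.\ map $T:B(\cH_\pi)\to\cI(C^*_\pi(\Gamma))$. I claim $T$ is automatically $\Gamma$-equivariant. Since $T|_{C^*_\pi(\Gamma)}$ is a $*$-homomorphism, $C^*_\pi(\Gamma)$ lies in the multiplicative domain of $T$, so for $g\in\Gamma$ and $x\in B(\cH_\pi)$,
\[
T(\mathrm{Ad}_{\pi(g)}(x)) \;=\; T(\pi(g))\,T(x)\,T(\pi(g^{-1})) \;=\; \pi(g)\,T(x)\,\pi(g^{-1}) \;=\; \tilde g(T(x)),
\]
the last equality invoking the key observation.

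Symmetrically, by injectivity of $B(\cH_\pi)$, the inclusion $C^*_\pi(\Gamma)\subseteq B(\cH_\pi)$ extends to a u.c.p.\ map $\iota:\cI(C^*_\pi(\Gamma))\to B(\cH_\pi)$, which is completely isometric by essentiality of the injective envelope and $\Gamma$-equivariant by the same multiplicative-domain argument. Hence the restriction $T|_{\cB_\pi}:\cB_\pi\to\cI(C^*_\pi(\Gamma))$ is a $\Gamma$-map, and the composition $\iota\circ T|_{\cB_\pi}:\cB_\pi\to B(\cH_\pi)$ is a $\Gamma$-map, which by $\pi$-essentiality (Proposition~\ref{min-image-properties}(2)) is completely isometric. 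Since $\iota$ is completely isometric, so is $T|_{\cB_\pi}$, yielding the desired $\Gamma$-inclusion $\cB_\pi\subseteq\cI(C^*_\pi(\Gamma))$.

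The main obstacle is the key observation that the rigidity-induced $\Gamma$-action on $\cI(C^*_\pi(\Gamma))$ is concretely realized by inner conjugation by $\pi(g)$ inside the injective envelope; once this is in place, the multiplicative domain automatically delivers the $\Gamma$-equivariance of both $T$ and $\iota$, and the rest of the argument slots neatly into the minimality and essentiality machinery already developed in this section (indeed, the multiplicative domain trick mirrors its use in Proposition~\ref{prop:weak-contmnt-->map-bnds}).
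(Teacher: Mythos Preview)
Your proof is correct and follows essentially the same strategy as the paper's: extend the identity on $C^*_\pi(\Gamma)$ to a u.c.p.\ map $B(\cH_\pi)\to\cI(C^*_\pi(\Gamma))$, observe it is $\Gamma$-equivariant via the multiplicative domain, and invoke $\pi$-essentiality (Proposition~\ref{min-image-properties}(2)) to conclude the restriction to $\cB_\pi$ is a complete isometry. The paper's version is shorter because it takes for granted the identification of $\cI(C^*_\pi(\Gamma))$ as a subspace of $B(\cH_\pi)$ already set up in Section~\ref{sec:G-inj-envelope}; with that identification in hand, the $\Gamma$-action on $\cI(C^*_\pi(\Gamma))$ is simply the restriction of $\mathrm{Ad}_\pi$, so your separate rigidity argument for the action and your auxiliary map $\iota$ become unnecessary, and $\pi$-essentiality applies directly to $T|_{\cB_\pi}$ viewed as a map into $B(\cH_\pi)$.
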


\begin{proof}
By injectivity, there is a projection $\phi:B(\cH_\pi) \to \cI(C^*_\pi (\Gamma))$ 
that extends the identity map on $C^*_\pi (\Gamma)$, hence in particular is $\Gamma$-equivariant. Then the restriction of $\phi$ to 
$\cB_\pi$ is an embedding by Proposition \ref{min-image-properties}.
\end{proof}

A natural question now is: To what extent does the FH-boundary of a representation determines its weak-equivalence class? Or more generally (and more reasonably), are there possible relations between FH-boundaries of various weak-equivalence classes of representations? We prove below a partial result in this direction showing that under certain assumptions we may classify the representations whose FH-boundaries coincide with that of the regular representation, namely the Furstenberg boundary of the group. 
\begin{thm}\label{thm:exact--w-cntmnt-reg<-->bnd-Furs}
If $\pi$ is a unitary representation of $\Gamma$ which is weakly contained in the regular representation $\lambda_\Gamma$, then $\cB_\pi \cong C(\partial_F \Gamma)$.

Conversely, if $\Gamma$ is an exact group with $\cB_\pi \cong C(\partial_F \Gamma)$, and
if there is a copy of $\cB_\pi$ that embeds as a $\Gamma$-invariant subspace of a commutative $C^*$-subalgebra of $B(\cH_\pi)$, then $\pi$ is weakly contained in $\lambda_\Gamma$.
\end{thm}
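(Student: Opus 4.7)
The forward direction is a rigidity argument. Assuming $\pi\prec\lambda_\G$, Proposition \ref{prop:weak-contmnt-->map-bnds} yields a $\G$-map $\alpha\colon C(\partial_F\G)=\cB_{\lambda_\G}\to\cB_\pi$. In the other direction, $\G$-injectivity of $C(\partial_F\G)\cong\cI_\G(\bC)$ extends the unital inclusion $\bC\hookrightarrow\cB_\pi$ to a $\G$-map $\beta\colon\cB_\pi\to C(\partial_F\G)$. The compositions $\beta\alpha$ and $\alpha\beta$ are then unital $\G$-maps, forced to be the identity by $\G$-rigidity of $\cI_\G(\bC)$ and by $\pi$-rigidity (Proposition \ref{min-image-properties}(4)), respectively. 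Thus $\alpha$ is a $\G$-isomorphism, giving $\cB_\pi\cong C(\partial_F\G)$.

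For the converse, the plan is to build a covariant pair for a suitable crossed product and exploit the amenability coming from exactness. Replacing the ambient commutative subalgebra by the commutative $C^*$-subalgebra $\cA\subseteq B(\cH_\pi)$ \emph{generated} by the given $\G$-invariant copy of $\cB_\pi$, I may assume that $\cA$ is itself $\G$-invariant, since the $\G$-action on $B(\cH_\pi)$ is by $*$-automorphisms and $\cA$ is generated by a $\G$-invariant set. Writing $\cA\cong C(W)$ for a compact $\G$-space $W$, the inclusion $C(\partial_F\G)\cong\cB_\pi\hookrightarrow\cA$ dualizes to a continuous $\G$-equivariant map $\iota^{*}\colon W\to\pr(\partial_F\G)$.

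Exactness of $\G$ implies topological amenability of $\G\act\partial_F\G$ (Kalantar--Kennedy). I would transfer this amenability to $W$ by averaging: if $m_n\colon\partial_F\G\to\pr(\G)$ are continuous approximately $\G$-equivariant means witnessing amenability of $\G\act\partial_F\G$, then
\[
\nu_n(w)\,:=\,\int_{\partial_F\G} m_n(x)\,d\iota^{*}(w)(x)\in\pr(\G)
\]
depends continuously on $w$ and satisfies $\|g\cdot\nu_n(w)-\nu_n(gw)\|_1\leq\sup_x\|g\cdot m_n(x)-m_n(gx)\|_1\to 0$, so $\G\act W$ is amenable. Consequently $\cA\rtimes\G=\cA\rtimes_r\G$, and the covariant pair $(\cA\hookrightarrow B(\cH_\pi),\pi)$ induces a $*$-homomorphism $\cA\rtimes_r\G\to B(\cH_\pi)$ sending each unitary $u_g$ to $\pi(g)$. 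Restricting to the canonical copy $C^{*}_{\lambda_\G}(\G)\hookrightarrow\cA\rtimes_r\G$ then yields a $*$-homomorphism $C^{*}_{\lambda_\G}(\G)\to B(\cH_\pi)$ with $\lambda_\G(g)\mapsto\pi(g)$, which is precisely $\pi\prec\lambda_\G$. The main obstacle will be the amenability pull-back step: one must verify weak-$*$ continuity and the approximate equivariance of the averaged means $\nu_n$, which crucially requires both the commutativity hypothesis (so that $\cA$ can be identified with some $C(W)$) and the identification $\cB_\pi\cong C(\partial_F\G)$ (so that exactness feeds into the amenability of $\G\act W$).
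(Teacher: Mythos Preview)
Your proposal is correct and follows essentially the same route as the paper's own proof: the forward direction is the identical rigidity argument, and for the converse both you and the paper pass to the $\Gamma$-invariant commutative $C^*$-algebra generated by $\cB_\pi$, dualize to a map into $\pr(\partial_F\Gamma)$, pull back amenability from $\Gamma\act\partial_F\Gamma$ (the paper cites \cite{H2000}*{Lemma 3.6} for this, while you sketch the averaging argument directly), and then use that full and reduced crossed products coincide to produce the $*$-homomorphism $C^*_{\lambda_\Gamma}(\Gamma)\to C^*_\pi(\Gamma)$.
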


\begin{proof}
For the first assertion, note that since $\pi\prec \lambda_\Gamma$, by Proposition \ref{prop:weak-contmnt-->map-bnds} we have a $\Gamma$-map $\cB_\lambda = C(\partial_F \Gamma) \to \cB_\pi$. By $\Gamma$-injectivity of $C(\partial_F \Gamma)$, there is also a $\Gamma$-map $\cB_\pi \to C(\partial_F \Gamma)$. By rigidity, the latter is in fact an isomorphism.

Conversely, suppose that $\Gamma$ is exact with $\cB_\pi \cong C(\partial_F \Gamma)$ which is embedded as a $\Gamma$-invariant subspace of a commutative $C^*$-subalgebra of $B(\cH_\pi)$. Let $\cA$ be the $C^*$-algebra generated by $\cB_\pi$ in $B(\cH_\pi)$. By the assumptions, we may take $\cA$ to be commutative and $\Gamma$-invariant, hence of the form $C(Y)$ for some compact $\Gamma$-space $Y$. The adjoint to the embedding $C(\partial_F \Gamma)\to C(Y)$ yields a $\Gamma$-equivariant map $Y\to \pr(\partial_F \Gamma)$. Since $\Gamma$ is exact, by \cite[Theorem 4.5]{KK} the action $\Gamma\act \partial_F \Gamma$ is (topologically) amenable, hence so is $\Gamma\act \pr(\partial_F \Gamma)$ (see, e.g., \cite[Lemma 3.6]{H2000}), which implies $\Gamma\act Y$ is amenable. Thus, the full and reduced crossed product of the latter action coincide \cite[Theorem 5.3]{Del02}. Hence, there is a canonical surjective $*$-homomorphism $\Gamma\ltimes_r C(Y)\to C^*(C(Y)\cup \pi(\Gamma))\subseteq B(\cH_\pi)$ which sends $\lambda_\Gamma(g)\mapsto \pi(g)$ for every $g\in \Gamma$. This implies $\pi\prec \lambda_\Gamma$.
\end{proof}


\subsubsection{Restriction to subgroups}

In this section we prove several results concerning  FH-boundaries of restrictions of representations.

\begin{lem}\label{lem:subgrp-bnd-embeds}
Let $\pi$ be a unitary representation of $\Gamma$, and let $\Lambda\leq \Gamma$ be a subgroup. Then there is a $\Lambda$-embedding $\cB_{\pi|_\Lambda} \subseteq \cB_\pi$.
\end{lem}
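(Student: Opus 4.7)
The plan is to produce a minimal $\Lambda$-map on $B(\cH_\pi)$ whose image literally sits inside $\cB_\pi$, and then invoke the uniqueness of minimal images up to $\Lambda$-isomorphism. The observation that makes this approach natural is that $\cH_{\pi|_\Lambda} = \cH_\pi$, so $\cB_\pi$ and $\cB_{\pi|_\Lambda}$ are both operator subsystems of the \emph{same} ambient $B(\cH_\pi)$, and every $\Gamma$-map on $B(\cH_\pi)$ is automatically a $\Lambda$-map. In particular the whole machinery of Proposition \ref{min-image-properties} is available on the $\Lambda$-side, acting on exactly the same ambient algebra.

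Concretely, I would fix a minimal $\Gamma$-map $\phi_0 : B(\cH_\pi) \to B(\cH_\pi)$ with $\im(\phi_0) = \cB_\pi$, together with a minimal $\Lambda$-map $\psi_0 : B(\cH_\pi) \to B(\cH_\pi)$ whose image we identify with $\cB_{\pi|_\Lambda}$. The key step is to consider the composition $\phi_0 \circ \psi_0$, which is u.c.p.\ and $\Lambda$-equivariant, and hence a $\Lambda$-map. Since u.c.p.\ maps are contractive we have $\phi_0 \circ \psi_0 \leq \psi_0$ in the pre-order on $\Lambda$-maps $B(\cH_\pi) \to B(\cH_\pi)$, and minimality of $\psi_0$ then forces $\|(\phi_0 \circ \psi_0)(x)\| = \|\psi_0(x)\|$ for every $x$. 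Consequently $\phi_0 \circ \psi_0$ is itself a minimal $\Lambda$-map, so by the uniqueness of minimal images in the $\Lambda$-setting we obtain $\im(\phi_0 \circ \psi_0) \cong \cB_{\pi|_\Lambda}$ as $\Lambda$-operator systems. On the other hand, by construction $\im(\phi_0 \circ \psi_0) \subseteq \im(\phi_0) = \cB_\pi$, and the set-theoretic inclusion is automatically a $\Lambda$-equivariant completely isometric u.c.p.\ map, since both sides inherit their matrix norms from the ambient $B(\cH_\pi)$. Composing this inclusion with the isomorphism $\cB_{\pi|_\Lambda} \cong \im(\phi_0 \circ \psi_0)$ yields the required $\Lambda$-embedding.

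There is no serious obstacle here; the proof is essentially a one-line application of the minimality machinery from Section 3.2, with $\phi_0$ only being used through its role as a $\Lambda$-idempotent onto $\cB_\pi$. The one point worth a moment's care is that the Choi--Effros $C^*$-algebra structure inherited by $\im(\phi_0 \circ \psi_0)$ as the image of a minimal $\Lambda$-projection may differ from the product on $\cB_\pi$ restricted to this subset, but the lemma asks only for a $\Lambda$-embedding in the operator-system sense, not a $*$-homomorphism, so this discrepancy is harmless.
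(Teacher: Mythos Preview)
Your argument is correct and is essentially the paper's proof, just phrased slightly more elaborately. The paper fixes a $\Gamma$-projection $\varphi: B(\cH_\pi)\to\cB_\pi$, views it as a $\Lambda$-map, and invokes $\pi|_\Lambda$-essentiality (Proposition~\ref{min-image-properties}(2)) directly to conclude that $\varphi|_{\cB_{\pi|_\Lambda}}$ is a $\Lambda$-embedding into $\cB_\pi$; your step ``minimality of $\psi_0$ forces $\|(\phi_0\circ\psi_0)(x)\|=\|\psi_0(x)\|$'' is exactly that essentiality statement, after which you could stop rather than passing through the uniqueness of minimal images.
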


\begin{proof}
Let $\varphi: B(\cH_\pi)\to \cB_\pi$ be a $\Gamma$-projection. Considering $\varphi$ as a $\Lambda$-map, it follows from Proposition \ref{min-image-properties} that its restriction to $\cB_{\pi|_\Lambda}$ is a $\Lambda$-embedding into $\cB_\pi$.
\end{proof}

\begin{lem}
Let $\pi$ be a representation of $\Gamma$ and let $\Lambda\leq \Gamma$ be a normal subgroup. If the action $\Lambda\act \cB_{\pi|_\Lambda}$ is faithful, then $$\ker(\Gamma\act \cB_{\pi}) \subseteq C_{\Gamma}(\Lambda) ,$$ where $C_{\Gamma}(\Lambda)$ is the centralizer of $\Lambda$ in $\Gamma$.
\end{lem}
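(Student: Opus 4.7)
The plan is to take an arbitrary element $g\in\ker(\Gamma\act \cB_\pi)$ and show that for every $h\in\Lambda$ the element $gh^{-1}g^{-1}h$ lies in $\ker(\Lambda\act \cB_{\pi|_\Lambda})$; since by hypothesis this kernel is trivial, it will follow that $g$ commutes with $h$, and hence $g\in C_\Gamma(\Lambda)$.

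First I would invoke Lemma \ref{lem:subgrp-bnd-embeds} to fix a $\Lambda$-embedding $\iota:\cB_{\pi|_\Lambda}\hookrightarrow \cB_\pi$, arising as the restriction of a $\Gamma$-projection $\varphi:B(\cH_\pi)\to \cB_\pi$. Thus $\iota$ satisfies $\iota(\Ad_{\pi(k)}b)=\Ad_{\pi(k)}\iota(b)$ for all $k\in\Lambda$ and $b\in\cB_{\pi|_\Lambda}$.

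Next, given $g\in\ker(\Gamma\act \cB_\pi)$, $h\in\Lambda$, and $b\in\cB_{\pi|_\Lambda}$, I would compute $\Ad_{\pi(gh)}\iota(b)$ in two different ways. Using $\Ad_{\pi(gh)}=\Ad_{\pi(g)}\Ad_{\pi(h)}$, the $\Lambda$-equivariance of $\iota$, and the fact that $\Ad_{\pi(g)}$ is the identity on $\cB_\pi$, one obtains
\[\Ad_{\pi(gh)}\iota(b)=\Ad_{\pi(g)}\iota(\Ad_{\pi(h)}b)=\iota(\Ad_{\pi(h)}b).\]
Alternatively, writing $\pi(gh)=\pi(ghg^{-1})\pi(g)$ (so that $\Ad_{\pi(gh)}=\Ad_{\pi(ghg^{-1})}\Ad_{\pi(g)}$), using normality of $\Lambda$ to ensure $ghg^{-1}\in\Lambda$, and again that $\Ad_{\pi(g)}$ acts trivially on $\cB_\pi$, one obtains
\[\Ad_{\pi(gh)}\iota(b)=\Ad_{\pi(ghg^{-1})}\iota(b)=\iota(\Ad_{\pi(ghg^{-1})}b).\]
Equating these and using injectivity of $\iota$ yields $\Ad_{\pi(h)}b=\Ad_{\pi(ghg^{-1})}b$ for every $b\in\cB_{\pi|_\Lambda}$, i.e., $gh^{-1}g^{-1}h$ acts trivially on $\cB_{\pi|_\Lambda}$. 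Since $gh^{-1}g^{-1}h\in\Lambda$ (again by normality), the faithfulness hypothesis forces $gh^{-1}g^{-1}h=e$, i.e., $gh=hg$, as desired.

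The argument is fairly mechanical once the $\Lambda$-embedding supplied by Lemma \ref{lem:subgrp-bnd-embeds} is in place. The only step that uses normality of $\Lambda$ in an essential way is the second computation above: one needs $ghg^{-1}\in\Lambda$ in order to invoke $\Lambda$-equivariance of $\iota$ for the operator $\Ad_{\pi(ghg^{-1})}$, so as to transport it back inside $\cB_{\pi|_\Lambda}$ where the faithfulness hypothesis can be applied.
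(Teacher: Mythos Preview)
Your proof is correct and follows essentially the same approach as the paper's. The paper identifies $\cB_{\pi|_\Lambda}$ with its image in $\cB_\pi$ and directly verifies that $\Ad_{\pi(ghg^{-1})}(a)=\Ad_{\pi(h)}(a)$ for $a\in\cB_{\pi|_\Lambda}$ using triviality of $\Ad_{\pi(g)}$ on $\cB_\pi$, while you keep the embedding $\iota$ explicit and compute $\Ad_{\pi(gh)}\iota(b)$ two ways; the underlying argument is identical.
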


\begin{proof}
By Lemma \ref{lem:subgrp-bnd-embeds} we may identify $\cB_{\pi|_\Lambda}$ with a $\Lambda$-invariant subspace of $\cB_\pi$. Now let $g\in\ker(\Gamma\act \cB_\pi)$. Then for every $h\in\Lambda$ and $a\in\cB_{\pi|_\Lambda}$ we have $\pi(g)\pi(h)\pi(g^{-1})a\pi(g)\pi(h^{-1})\pi(g^{-1}) = \pi(h)a\pi(h^{-1})$ which implies $h^{-1}ghg^{-1} \in \ker(\Lambda\act \cB_{\pi|_\Lambda}) = \{e\}$, hence $g\in C_{\Gamma}(\Lambda)$. 
\end{proof}

\begin{cor}
Let $\pi$ be a representation of $\Gamma$, and let $\Lambda\leq \Gamma$ be a normal subgroup with trivial centralizer in $\Gamma$. If the action $\Lambda\act \cB_{\pi|_\Lambda}$ is faithful, then so is the action $\Gamma\act \cB_\pi$. 
\end{cor}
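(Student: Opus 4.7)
The corollary is essentially an immediate consequence of the preceding lemma, so the plan is very short. My plan is to apply the preceding lemma directly: it states that, under the hypothesis that $\Lambda \act \cB_{\pi|_\Lambda}$ is faithful, the kernel of $\Gamma \act \cB_\pi$ is contained in the centralizer $C_\Gamma(\Lambda)$.

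Given this, I would simply observe that the hypothesis ``$\Lambda$ has trivial centralizer in $\Gamma$'' means $C_\Gamma(\Lambda) = \{e\}$, so
\[
\ker(\Gamma \act \cB_\pi) \subseteq C_\Gamma(\Lambda) = \{e\},
\]
which is precisely the statement that the action $\Gamma \act \cB_\pi$ is faithful.

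There is no real obstacle here; the corollary is a clean specialization of the lemma under the extra centralizer assumption. The only thing to double-check when writing the proof is that the hypothesis ``faithfulness of $\Lambda \act \cB_{\pi|_\Lambda}$'' is indeed being invoked exactly as the lemma requires, and that ``trivial centralizer'' is interpreted as $C_\Gamma(\Lambda) = \{e\}$ (i.e.\ no non-identity element of $\Gamma$ commutes with every element of $\Lambda$), which is the standard reading.
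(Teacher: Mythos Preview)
Your proposal is correct and matches the paper's approach exactly: the corollary is stated without proof in the paper, as it follows immediately from the preceding lemma together with the assumption $C_\Gamma(\Lambda)=\{e\}$, precisely as you describe.
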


\subsubsection{Tensor products}

\begin{prop}\label{prop:tensor-prod}
Let $\pi_1, \pi_2$ be representations of $\Gamma$. Then there are $\Gamma$-maps $\cB_{\pi_i}\to \cB_{\pi_1\otimes\pi_2}$ for $i=1,2$.
\end{prop}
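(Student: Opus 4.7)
The plan is to produce the required $\Gamma$-maps by combining Proposition \ref{prop:weak-contmnt-->map-bnds} with the obvious equivariant inclusions of $B(\cH_{\pi_i})$ into $B(\cH_{\pi_1}\otimes\cH_{\pi_2})$ as tensor factors.

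First, I would verify that the $*$-homomorphism $\iota_1 : B(\cH_{\pi_1}) \to B(\cH_{\pi_1}\otimes\cH_{\pi_2})$ given by $T \mapsto T \otimes I$ is $\Gamma$-equivariant, where the source carries the $\Ad_{\pi_1(g)}$-action and the target carries the $\Ad_{(\pi_1\otimes\pi_2)(g)}$-action. This is a direct computation:
\[
\Ad_{(\pi_1\otimes\pi_2)(g)}(T\otimes I) = \pi_1(g)T\pi_1(g^{-1})\otimes \pi_2(g)\pi_2(g^{-1}) = \Ad_{\pi_1(g)}(T)\otimes I.
\]
Being a unital $*$-homomorphism, $\iota_1$ is in particular a $\Gamma$-map. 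Symmetrically, $\iota_2 : B(\cH_{\pi_2}) \to B(\cH_{\pi_1}\otimes\cH_{\pi_2})$, $T\mapsto I\otimes T$, is a $\Gamma$-map.

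Next, I would simply invoke Proposition \ref{prop:weak-contmnt-florida$ \to $map-bnds}: the existence of a $\Gamma$-map $B(\cH_{\pi_i}) \to B(\cH_{\pi_1\otimes\pi_2})$ implies the existence of a $\Gamma$-map $\cB_{\pi_i} \to \cB_{\pi_1\otimes\pi_2}$. Concretely, one composes
\[
\cB_{\pi_i} \hookrightarrow B(\cH_{\pi_i}) \xrightarrow{\iota_i} B(\cH_{\pi_1}\otimes\cH_{\pi_2}) \xrightarrow{\varphi} \cB_{\pi_1\otimes\pi_2},
\]
where $\varphi$ is a $\Gamma$-projection onto $\cB_{\pi_1\otimes\pi_2}$ coming from the definition of the FH-boundary, and the first arrow is the inclusion of $\cB_{\pi_i}$ as a $\Gamma$-invariant subspace of $B(\cH_{\pi_i})$.

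There is no real obstacle here; the statement is essentially a formal consequence of Proposition \ref{prop:weak-contmnt-->map-bnds} once one recognizes that tensoring with the identity provides an equivariant $*$-homomorphism in the required direction. The one thing to be slightly careful about is that the $\Gamma$-action on $B(\cH_{\pi_1}\otimes\cH_{\pi_2})$ is via $\Ad_{(\pi_1\otimes\pi_2)(g)}$ (the diagonal inner action), rather than via $\Ad_{\pi_i(g)}$ on a tensor factor separately; but as the computation above shows, this is precisely what makes $\iota_i$ equivariant.
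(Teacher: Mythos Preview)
Your proof is correct and follows essentially the same route as the paper: both use the equivariant embedding $T\mapsto T\otimes I$ from $B(\cH_{\pi_1})$ into $B(\cH_{\pi_1}\otimes\cH_{\pi_2})$, compose with a $\Gamma$-projection onto $\cB_{\pi_1\otimes\pi_2}$, and restrict to $\cB_{\pi_1}$. The only difference is cosmetic---you spell out the equivariance check and frame the conclusion via Proposition~\ref{prop:weak-contmnt-->map-bnds}, whereas the paper states the composition directly; note also that your second citation of that proposition has a mangled label that will not compile.
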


\begin{proof}
Compose the $\Gamma$-map $B(\cH_{\pi_1}) \to B(\cH_{\pi_1})\otimes B(\cH_{\pi_2})$, $a \mapsto a \otimes I$, with a $\Gamma$-map $B(\cH_{\pi_1})\otimes B(\cH_{\pi_2}) \to \cB_{\pi_1\otimes\pi_2}$, and restrict the resulting map to $\cB_{\pi_1}$ to obtain a $\Gamma$-map $\cB_{\pi_1}\to \cB_{\pi_1\otimes\pi_2}$. The argument for $\pi_2$ is similar.
\end{proof}

\subsubsection{Induction}
In this section we prove some results concerning FH-boundaries of induced representations.

The first result generalizes a special case of \cite[Corollary 5.6]{Bekka}. 

\begin{prop} \label{ind repn prop}
Suppose $\Lambda \leq \G$ is a subgroup, $\pi$ is a unitary representation of $\G$, and $\sigma$ is a unitary representation of $\Lambda$ such that $\pi \prec \mathrm{ind}_\Lambda^\G(\pi|_\Lambda)$ and $\pi|_\Lambda \prec \sigma \otimes \tau$ for some unitary representation $\tau$ of $\Lambda$. Then there is a $\G$-map \( \cB_{\mathrm{ind}_\Lambda^\G \sigma} \to \cB_\pi. \)
\end{prop}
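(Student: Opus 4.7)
The plan is to construct a $\G$-map $B(\cH_{\mathrm{ind}_\Lambda^\G \sigma})\to B(\cH_\pi)$; by Proposition \ref{prop:weak-contmnt-->map-bnds} this automatically produces the desired $\G$-map $\cB_{\mathrm{ind}_\Lambda^\G \sigma}\to\cB_\pi$. I will build this $\G$-map as the composition
\[
B(\cH_{\mathrm{ind}_\Lambda^\G \sigma}) \xrightarrow{T\mapsto T\otimes I} B\bigl(\cH_{\mathrm{ind}_\Lambda^\G \sigma}\otimes \cH_{\mathrm{ind}_\Lambda^\G \tau}\bigr) \to B(\cH_{\mathrm{ind}_\Lambda^\G(\sigma\otimes\tau)}) \to B(\cH_{\mathrm{ind}_\Lambda^\G(\pi|_\Lambda)}) \to B(\cH_\pi),
\]
each arrow being a u.c.p.\ $\G$-map produced by one of the hypotheses (or the formal step $T\mapsto T\otimes I$, which is visibly $\G$-equivariant since the action on the second tensor leg fixes $I$).

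The rightmost arrow is given by Proposition \ref{prop:weak-contmnt-->map-bnds} applied to the hypothesis $\pi\prec \mathrm{ind}_\Lambda^\G(\pi|_\Lambda)$. For the next arrow, I would apply monotonicity of induction under weak containment to $\pi|_\Lambda\prec \sigma\otimes\tau$, obtaining $\mathrm{ind}_\Lambda^\G(\pi|_\Lambda)\prec \mathrm{ind}_\Lambda^\G(\sigma\otimes\tau)$, and then invoke Proposition \ref{prop:weak-contmnt-->map-bnds} again.

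The key step is the middle arrow, i.e., the weak containment $\mathrm{ind}_\Lambda^\G(\sigma\otimes\tau)\prec \mathrm{ind}_\Lambda^\G(\sigma)\otimes \mathrm{ind}_\Lambda^\G(\tau)$. To establish it, I would first observe that $\tau$ embeds as a $\Lambda$-subrepresentation of $\mathrm{ind}_\Lambda^\G(\tau)|_\Lambda$ (realized via the functions supported on the trivial coset in the standard model of the induced representation), so that $\sigma\otimes\tau$ is a $\Lambda$-subrepresentation of $\sigma\otimes \mathrm{ind}_\Lambda^\G(\tau)|_\Lambda$. Applying $\mathrm{ind}_\Lambda^\G(\,\cdot\,)$ and then using the projection formula $\mathrm{ind}_\Lambda^\G(\sigma\otimes \rho|_\Lambda)\cong \mathrm{ind}_\Lambda^\G(\sigma)\otimes \rho$ (for $\rho$ a $\G$-representation) with $\rho=\mathrm{ind}_\Lambda^\G(\tau)$ exhibits $\mathrm{ind}_\Lambda^\G(\sigma\otimes\tau)$ as a $\G$-subrepresentation, and in particular as weakly contained in $\mathrm{ind}_\Lambda^\G(\sigma)\otimes \mathrm{ind}_\Lambda^\G(\tau)$.

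The main technical obstacle I anticipate is a careful verification of the projection formula and the monotonicity of induction under weak containment, so that the resulting Hilbert-space identifications are genuinely $\G$-equivariant. Both are classical in the discrete setting, but one must pin down conventions for $\mathrm{ind}_\Lambda^\G$ (e.g.\ using the $\ell^2$-section model) to ensure all the intertwiners involved are unitary $\G$-maps. Once this is done, chaining the four arrows above produces a $\G$-map $B(\cH_{\mathrm{ind}_\Lambda^\G \sigma})\to B(\cH_\pi)$, and one final application of Proposition \ref{prop:weak-contmnt-->map-bnds} yields the desired $\G$-map $\cB_{\mathrm{ind}_\Lambda^\G \sigma}\to \cB_\pi$.
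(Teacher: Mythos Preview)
Your proposal is correct and follows essentially the same route as the paper. The paper establishes the chain of weak containments
\[
\pi \prec \mathrm{ind}_\Lambda^\G(\pi|_\Lambda) \prec \mathrm{ind}_\Lambda^\G(\sigma\otimes\tau) \prec \mathrm{ind}_\Lambda^\G\bigl((\mathrm{ind}_\Lambda^\G\sigma)|_\Lambda\otimes\tau\bigr) = \mathrm{ind}_\Lambda^\G\sigma \otimes \mathrm{ind}_\Lambda^\G\tau
\]
and then invokes Proposition~\ref{prop:tensor-prod} together with Proposition~\ref{prop:weak-contmnt-->map-bnds}; the only cosmetic differences are that the paper embeds $\sigma$ (rather than your $\tau$) into the restriction of its induction, and phrases the argument in terms of weak containment rather than explicitly writing the arrows at the level of $B(\cH)$'s---your map $T\mapsto T\otimes I$ is precisely what appears inside the proof of Proposition~\ref{prop:tensor-prod}.
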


\begin{proof}
By general properties of induced representations and weak containment (see, e.g., \cite[Appendix E, F]{BHV08}), we have
\[\begin{split} 
\pi &\prec \mathrm{ind}_\Lambda^\G(\pi|_\Lambda) \prec \mathrm{ind}_\Lambda^\G(\sigma \otimes \tau) \\&\prec \mathrm{ind}_\Lambda^\G ((\mathrm{ind}_\Lambda^\G \sigma)|_\Lambda \otimes \tau) = \mathrm{ind}_\Lambda^\G \sigma \otimes \mathrm{ind}_\Lambda^\G \tau. 
\end{split}\]
Thus, combining Proposition \ref{prop:tensor-prod} with Proposition \ref{prop:weak-contmnt-->map-bnds} yields the result.
\end{proof}

\begin{thm}
If $\Lambda$ is a co-amenable subgroup of $\G$ and $\pi$ is a unitary representation of $\G$, then there is a $\G$-isomorphism $\cB_{\mathrm{ind}_\Lambda^\G (\pi|_\Lambda)}\cong \cB_\pi$.
\end{thm}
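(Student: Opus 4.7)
The plan is to produce $\Gamma$-maps in both directions between $\cB_\pi$ and $\cB_{\ind_\Lambda^\Gamma(\pi|_\Lambda)}$, and then invoke $\pi$-rigidity (Proposition \ref{min-image-properties}(4)) to conclude that their compositions are the identity, giving the isomorphism. The technical backbone is the classical identification
\[
\ind_\Lambda^\Gamma(\pi|_\Lambda) \cong \pi \otimes \lambda_{\Gamma/\Lambda},
\]
a special case of the tensor product formula $\ind_\Lambda^\Gamma(\pi|_\Lambda \otimes \sigma) \cong \pi \otimes \ind_\Lambda^\Gamma(\sigma)$ with $\sigma = 1_\Lambda$ (as in \cite[Appendix E]{BHV08}).

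First I would establish that $\pi \prec \ind_\Lambda^\Gamma(\pi|_\Lambda)$. By definition, co-amenability of $\Lambda$ in $\Gamma$ means $1_\Gamma \prec \lambda_{\Gamma/\Lambda}$. Since weak containment is preserved under tensoring by a fixed representation, tensoring with $\pi$ yields $\pi = \pi \otimes 1_\Gamma \prec \pi \otimes \lambda_{\Gamma/\Lambda} \cong \ind_\Lambda^\Gamma(\pi|_\Lambda)$. Applying Proposition \ref{prop:weak-contmnt-->map-bnds} (or equivalently Proposition \ref{ind repn prop} with $\sigma = \pi|_\Lambda$ and $\tau = 1_\Lambda$, which satisfies its hypotheses trivially together with the above weak containment) produces a $\Gamma$-map $\cB_{\ind_\Lambda^\Gamma(\pi|_\Lambda)} \to \cB_\pi$.

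For the opposite direction, I would appeal to Proposition \ref{prop:tensor-prod}, which supplies a $\Gamma$-map $\cB_\pi \to \cB_{\pi \otimes \lambda_{\Gamma/\Lambda}}$. Using the identification above, this is a $\Gamma$-map $\cB_\pi \to \cB_{\ind_\Lambda^\Gamma(\pi|_\Lambda)}$. Now the composition in either order is a $\Gamma$-endomorphism of $\cB_\pi$ or of $\cB_{\ind_\Lambda^\Gamma(\pi|_\Lambda)}$, and by $\pi$-rigidity (Proposition \ref{min-image-properties}(4)) each such endomorphism must be the identity. Hence the two $\Gamma$-maps are mutually inverse $\Gamma$-isomorphisms.

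The main thing to be careful about is the manipulation of induced representations: the two identities $\ind_\Lambda^\Gamma(\pi|_\Lambda) \cong \pi \otimes \lambda_{\Gamma/\Lambda}$ and the stability of weak containment under tensor products are standard in representation theory of discrete groups, but they must be cited correctly to make the argument watertight. Beyond that the proof is essentially an exercise in assembling earlier results.
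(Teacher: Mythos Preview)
Your proposal is correct and follows essentially the same argument as the paper: both use co-amenability to get $\pi \prec \pi\otimes\lambda_{\Gamma/\Lambda}\cong \ind_\Lambda^\Gamma(\pi|_\Lambda)$ for one $\Gamma$-map, Proposition~\ref{prop:tensor-prod} for the reverse $\Gamma$-map, and then rigidity to conclude. The only cosmetic difference is that the paper writes out the chain $\pi\otimes\ind_\Lambda^\Gamma 1_\Lambda = \ind_\Lambda^\Gamma(\pi|_\Lambda\otimes 1_\Lambda)$ explicitly rather than citing the tensor-product formula abstractly.
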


\begin{proof}
Since $\Lambda$ is co-amenable in $\G$, it follows that
\[
\pi = \pi \otimes 1_\G \prec \pi \otimes \lambda_{\G/\Lambda} = \pi \otimes \mathrm{ind}_\Lambda^\G 1_\Lambda = \mathrm{ind}_\Lambda^\G(\pi|_\Lambda \otimes 1_\Lambda) = \mathrm{ind}_\Lambda^\G (\pi|_\Lambda) ,
\]
which implies there is a $\Gamma$-map $\cB_{\mathrm{ind}_\Lambda^\G (\pi|_\Lambda)}\to \cB_\pi$. On the other hand, since $\mathrm{ind}_\Lambda^\G (\pi|_\Lambda)=\pi\otimes \lambda_{\G/\Lambda}$, Proposition \ref {prop:tensor-prod} yields a $\Gamma$-map $\cB_\pi\to \cB_{\mathrm{ind}_\Lambda^\G (\pi|_\Lambda)}$. By $\Gamma$-rigidity both above $\Gamma$-maps are isomorphisms.
\end{proof}


\subsection{Amenability of $\pi$-boundary actions}
In \cite{Oza07} Ozawa conjectured that for any separable unital exact $C^*$-algebra $\cA$ there is a unital nuclear $C^*$-algebra $\cB$ such that $\cA\subseteq \cB\subseteq \cI(\cA)$. The conjecture was proved for the reduced $C^*$-algebras of exact groups in \cite{KK}, where it was shown that if $C^*_\lambda(\Gamma)$ is exact then $\Gamma\ltimes_r C(\partial_F \Gamma)$ is nuclear and $C^*_\lambda(\Gamma)\subseteq\Gamma\ltimes_r C(\partial_F \Gamma)\subseteq\cI(C^*_\lambda(\Gamma))$. The conjecture remains open in general.

Recall from Proposition \ref{prop:bndy in inj env} that $\cB_\pi \subseteq \cI(C^*_\pi(\Gamma))$. Since any separable unital $C^*$-algebra $\cA$ is of the form $C^*_\pi(\Gamma)$ for some countable group $\Gamma$ and a unitary representation $\pi$ of $\Gamma$, by analogy with the result from \cite{KK} just mentioned, it is natural to study properties of $C^*$-algebras generated by $\pi(\Gamma)$ and $\cB_\pi$ in $B(\cH_\pi)$.

Say that an FH-boundary is \emph{$C^*$-embeddable} if there is a $*$-homomorphic copy of $\cB_\pi$ in $B(\cH_\pi)$. Fix such a copy, and let $\tilde \cB_\pi$ be the $C^*$-algebra generated by $\pi(\Gamma)$ and $\cB_\pi$ in $B(\cH_\pi)$. One natural question in light of the above discussion is: in which cases does exactness of $C^*_\pi(\Gamma)$ imply nuclearity of $\tilde\cB_\pi$?

Recall that the nuclearity of the reduced crossed product follows from amenability of the action.

The definition of an amenable group action is due to Anantharaman-Delaroche \cite{Del02}*{Definition 2.1}.
For the general theory of exactness and amenable actions, we refer the reader to the book of Brown and Ozawa \cite{BroOza08}.

In the general case of a $C^*$-embeddable FH-boundary, we should not seek for amenability of the action, as the next theorem shows.

\begin{thm}
Let $\pi$ be a unitary representation of $\Gamma$ such that there is $\cB_\pi$ is $C^*$-embeddable. Then the action $\Gamma\act \cB_\pi$ is topologically amenable if and only if $\Gamma$ is an exact group and $\pi$ is weakly contained in the left regular representation.
\end{thm}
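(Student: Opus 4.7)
I plan to handle the two directions separately.

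For the backward direction, assume $\pi\prec\lambda_\Gamma$ and $\Gamma$ is exact. By Corollary~\ref{cor:bndy in inj env} there is a $\Gamma$-isomorphism $\cB_\pi\cong\cB_{\lambda_\Gamma}=C(\partial_F\Gamma)$. Since $\Gamma$ is exact, \cite{KK}*{Theorem 4.5} gives that $\Gamma\act\partial_F\Gamma$ is topologically amenable; as topological amenability is a property of the abstract $\Gamma$-$C^*$-algebra, it transfers to $\Gamma\act\cB_\pi$.

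For the forward direction, fix a $C^*$-embedding of $\cB_\pi$ inside $B(\cH_\pi)$ and assume $\Gamma\act\cB_\pi$ is topologically amenable. I would first extract $\pi\prec\lambda_\Gamma$ via a covariant-pair argument: the inclusion $\iota\colon\cB_\pi\hookrightarrow B(\cH_\pi)$ together with $\pi\colon\Gamma\to\cU(\cH_\pi)$ is covariant for the conjugation action $\Ad_{\pi(g)}$, so the universal property of the full crossed product yields a $*$-homomorphism
\[
\rho\colon\cB_\pi\rtimes\Gamma\to\tilde\cB_\pi\subseteq B(\cH_\pi).
\]
By Anantharaman--Delaroche (\cite{BroOza08}), amenability of the action forces the canonical surjection $\cB_\pi\rtimes\Gamma\to\cB_\pi\rtimes_r\Gamma$ to be an isomorphism, so $\rho$ descends to a $*$-homomorphism $\bar\rho\colon\cB_\pi\rtimes_r\Gamma\to B(\cH_\pi)$. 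Restricting $\bar\rho$ to the canonical copy of $C^*_\lambda(\Gamma)$ sitting inside $\cB_\pi\rtimes_r\Gamma$ produces a $*$-homomorphism $C^*_\lambda(\Gamma)\to C^*_\pi(\Gamma)$ with $\lambda_\Gamma(g)\mapsto\pi(g)$, i.e., $\pi\prec\lambda_\Gamma$.

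Next, I would derive exactness of $\Gamma$ from injectivity of $\cB_\pi$. Being the image of a u.c.p.\ idempotent on the injective operator system $B(\cH_\pi)$, the algebra $\cB_\pi$ is injective, and injective $C^*$-algebras are nuclear (their biduals are injective von Neumann algebras, hence semidiscrete by Connes' theorem). Combined with amenability of $\Gamma\act\cB_\pi$, standard results on amenable actions yield nuclearity of $\cB_\pi\rtimes_r\Gamma$. Since $C^*_\lambda(\Gamma)$ embeds into this nuclear $C^*$-algebra and subalgebras of nuclear $C^*$-algebras are exact, $C^*_\lambda(\Gamma)$ is exact; equivalently, $\Gamma$ is exact.

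I expect the main technical hurdle to be ensuring the covariant-pair step is set up cleanly: the $C^*$-embeddability hypothesis must be genuinely used so that $\rho$ is a $*$-homomorphism (rather than merely u.c.p.), and the canonical copy of $C^*_\lambda(\Gamma)$ inside the reduced crossed product must be correctly identified to read off the weak containment. Once these pieces are in place, the remaining steps (invoking amenability $\Rightarrow$ full crossed product $=$ reduced crossed product, and the transfer of nuclearity through crossed products by amenable actions) are standard.
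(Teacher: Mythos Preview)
Your forward-direction argument for $\pi\prec\lambda_\Gamma$ via the covariant pair and the coincidence of full and reduced crossed products is exactly the paper's approach. The backward direction is also essentially the paper's, though note that Corollary~\ref{cor:bndy in inj env} only concerns weak \emph{equivalence}; the statement you actually need (that $\pi\prec\lambda_\Gamma$ alone forces $\cB_\pi\cong C(\partial_F\Gamma)$) is the first assertion of Theorem~\ref{thm:exact--w-cntmnt-reg<-->bnd-Furs}.

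There is, however, a genuine gap in your derivation of exactness. The claim ``injective $C^*$-algebras are nuclear (their biduals are injective von Neumann algebras)'' is false: $B(\cH)$ for infinite-dimensional $\cH$ is injective but not nuclear, and its bidual is \emph{not} an injective von Neumann algebra. The Choi--Effros/Connes characterization runs the other way around---$A$ is nuclear iff $A^{**}$ is injective---but injectivity of $A$ does not pass to $A^{**}$. Consequently you cannot conclude that $\cB_\pi\rtimes_r\Gamma$ is nuclear from injectivity of $\cB_\pi$ alone, and the chain leading to exactness of $C^*_\lambda(\Gamma)$ breaks.

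The paper sidesteps this by restricting the amenable action to the (commutative) centre of $\cB_\pi$ and invoking \cite[Theorem 7.2]{Del02}, which gives exactness of $\Gamma$ directly from the existence of an amenable action on a unital commutative $C^*$-algebra. Alternatively, your own line can be repaired: having already established $\pi\prec\lambda_\Gamma$, Theorem~\ref{thm:exact--w-cntmnt-reg<-->bnd-Furs} yields $\cB_\pi\cong C(\partial_F\Gamma)$, which is commutative and hence genuinely nuclear; then your crossed-product argument for exactness goes through as written.
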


\begin{proof}
Suppose $\Gamma\act \cB_\pi$ is topologically amenable. 
Then the restriction of the action to the center of $\cB_\pi$ is also amenable, hence $\Gamma$ is an exact group by \cite[Theorem 7.2]{Del02}.
Moreover, we have $\Gamma\ltimes\cB_\pi = \Gamma\ltimes_r\cB_\pi$ by \cite[Theorem 5.3]{Del02}. Thus, since $\cB_\pi$ is $C^*$-embeddable, there is a $*$-homomorphism $\Gamma\ltimes_r\cB_\pi \to \tilde\cB_\pi$ that maps $\csr$ onto $C^*_\pi(\Gamma)$ canonically. This implies $\pi \prec \lambda_\Gamma$.

Conversely, suppose $\Gamma$ is exact and $\pi \prec \lambda_\Gamma$. 
By \ref{thm:exact--w-cntmnt-reg<-->bnd-Furs} we have $\cB_\pi\cong C(\partial_F\Gamma)$. Hence, the action $\Gamma\act \cB_\pi$ is topologically amenable by \cite[Theorem 4.5]{KK}.
\end{proof}


\section{Amenability and co-amenability of normal subgroups with respect to unitary representations}

Throughout this section $\Lambda$ is a normal subgroup of $\G$. 
Note that in this case, for any unitary representation $\pi$ of $\G$, 
the commutant $\pi(\Lambda)'$ in $B(\Hil_\pi)$ is $\G$-invariant.

\subsection{$\pi$-amenability}

\begin{defn}
Say that $\Lambda$ is \emph{$\pi$-amenable} if 
there is a $\Gamma$-map $\varphi: B(\Hil_\pi) \to \pi(\Lambda)'$. 
\end{defn}

\begin{prop}\label{pi-amen-equiv-conds}
Let $\pi$ be a unitary representation of $\G$. Then 
the following are equivalent:
\begin{enumerate}
\item 
$\pi$ is an amenable representation;
\item
any normal subgroup $\Lambda$ is $\pi$-amenable;
\item
$\G$ is $\pi$-amenable.
\end{enumerate}
\end{prop}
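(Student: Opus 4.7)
The plan is to prove the cycle of implications (1)$\Rightarrow$(2)$\Rightarrow$(3)$\Rightarrow$(1). Two of these implications are essentially formal, and the interesting content sits in the final step, which rests on a single structural observation about the commutant.

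For (1)$\Rightarrow$(2), I would start with an invariant mean $\phi: B(\mathcal{H}_\pi)\to\bC$ provided by amenability of $\pi$, and then simply compose with the unital inclusion $\bC\hookrightarrow \pi(\Lambda)'$ that sends $1$ to the identity operator. This composition is u.c.p.\ by construction; it is $\G$-equivariant because the image lies in the fixed-point algebra $\bC\cdot I$, which $\G$ fixes pointwise under $\Ad_{\pi(g)}$, and $\phi$ itself is $\G$-invariant. No normality of $\Lambda$ is needed for this direction. The implication (2)$\Rightarrow$(3) is immediate by specializing to the normal subgroup $\Lambda = \G$.

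The main step is (3)$\Rightarrow$(1). The key observation is that the $\G$-action on $\pi(\G)'$ via $\Ad_{\pi(g)}$ is trivial: indeed, any $a\in\pi(\G)'$ commutes with each $\pi(g)$, so $\Ad_{\pi(g)}(a)=a$. Consequently, every state $\rho$ on the unital $C^*$-algebra $\pi(\G)'$ is automatically $\G$-invariant. Now take $\varphi: B(\Hil_\pi)\to\pi(\G)'$ to be the $\G$-map provided by hypothesis~(3), fix any state $\rho$ on $\pi(\G)'$ (it exists because $\pi(\G)'$ is a unital $C^*$-algebra), and consider the composition $\rho\circ\varphi: B(\Hil_\pi)\to \bC$. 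This is a state, and for every $g\in\G$ and $a\in B(\Hil_\pi)$ one has
\[
\rho\bigl(\varphi(\Ad_{\pi(g)}a)\bigr) \;=\; \rho\bigl(\Ad_{\pi(g)}\varphi(a)\bigr) \;=\; \rho\bigl(\varphi(a)\bigr),
\]
where the first equality uses $\G$-equivariance of $\varphi$ and the second uses the triviality of the $\G$-action on $\pi(\G)'$ just noted. Hence $\rho\circ\varphi$ is a $\G$-invariant state on $B(\Hil_\pi)$, witnessing that $\pi$ is an amenable representation.

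I do not anticipate a serious obstacle: the argument is entirely formal once one notes that the commutant $\pi(\G)'$ carries the trivial $\G$-action. The only thing to double-check is that the definition of $\Gamma$-map used here (u.c.p.\ plus equivariance) is consistent with viewing $\pi(\Lambda)'$ as a $\Gamma$-operator system with the inherited $\Ad$-action, which is immediate from the fact that $\pi(\Lambda)'$ is $\G$-invariant whenever $\Lambda$ is normal (as stated at the beginning of the section).
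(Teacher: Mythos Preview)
Your proof is correct and follows essentially the same approach as the paper: both argue the cycle (1)$\Rightarrow$(2)$\Rightarrow$(3)$\Rightarrow$(1), with the substantive step being (3)$\Rightarrow$(1) via the observation that $\Gamma$ acts trivially on $\pi(\Gamma)'$, so that composing the given $\Gamma$-map with any state on $\pi(\Gamma)'$ yields an invariant mean on $B(\cH_\pi)$.
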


\begin{proof}
$(1)\Rightarrow (2)$: 
Suppose $\pi$ is an amenable representation, i.e., there is a $\Gamma$-invariant 
mean $\phi$ on $B(\Hil_\pi)$. We may consider $\phi$ as a 
$\Gamma$-map $\varphi: B(\Hil_\pi) \to \pi(\Lambda)'$.
$(2)\Rightarrow (3)$ is trivial. 
$(3)\Rightarrow (1)$: 
suppose $\G$ is $\pi$-amenable, and let 
$\varphi: B(\Hil_\pi) \to \pi(\Gamma)'$ be a $\Gamma$-map. 
Note that $\Gamma$ acts trivially on $\pi(\Gamma)'$.  
Thus, for any state $\rho$ on $\pi(\Gamma)'$, the composition 
$\rho\circ \varphi$ is a $\Gamma$-invariant 
mean on $B(\Hil_\pi)$.
\end{proof}

\begin{prop}
Let $\lambda_\G$ be the left regular representation of $\G$. 
Then a normal subgroup $\Lambda \trianglelefteq \G$ is $\lambda_\G$-amenable 
if and only if it is amenable.
\end{prop}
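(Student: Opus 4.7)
The plan is to build the required $\G$-map in each direction by exploiting two facts special to the regular representation: the canonical $\G$-equivariant expectation $\bE : B(\ell^2(\G)) \to \ell^\infty(\G)$ recalled in Section~2.4, and the inclusion $\ell^\infty(\G/\Lambda) \subseteq \lambda_\G(\Lambda)'$, which holds because $\Lambda$ is normal: left and right cosets coincide, so multiplication by a function constant on cosets of $\Lambda$ commutes with $\lambda_\G(h)$ for every $h \in \Lambda$.

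For the direction $(\Leftarrow)$, assuming $\Lambda$ is amenable, I would fix a left-invariant mean $m$ on $\ell^\infty(\Lambda)$ and set
\[
A(f)(x\Lambda) \; := \; m_h\bigl(f(xh)\bigr), \qquad f \in \ell^\infty(\G),\ x \in \G.
\]
Well-definedness on $\G/\Lambda$ is exactly left-invariance of $m$, and $\G$-equivariance of $A : \ell^\infty(\G) \to \ell^\infty(\G/\Lambda)$ is automatic because both $\G$-actions are by left translation while the averaging variable $h$ is inserted on the right. Composing with $\bE$ then produces the desired $\G$-map
\[
A \circ \bE : B(\ell^2(\G)) \to \ell^\infty(\G/\Lambda) \subseteq \lambda_\G(\Lambda)'.
\]
This is where I expect the main obstacle to lie: the most natural candidate, $x \mapsto \mathrm{mean}_{h \in \Lambda} \Ad\lambda_\G(h)(x)$, does land in $\lambda_\G(\Lambda)'$, but fails $\G$-equivariance unless $m$ is also invariant under the $\G$-conjugation action on $\Lambda$, which one has no reason to expect. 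Factoring through $\ell^\infty(\G)$ sidesteps the issue because on that algebra the $\G$-action and the averaging variable act on opposite sides.

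For the direction $(\Rightarrow)$, suppose $\varphi : B(\ell^2(\G)) \to \lambda_\G(\Lambda)'$ is a $\G$-map. Since $\lambda_\G(\Lambda)$ commutes with $\lambda_\G(\Lambda)'$, the $\G$-action on the range of $\varphi$ restricts to the \emph{trivial} action of $\Lambda$; together with $\G$-equivariance this forces $\varphi$ to be $\Lambda$-invariant, so for any state $\rho$ on $\lambda_\G(\Lambda)'$, the composition $\rho \circ \varphi$ is a $\Lambda$-invariant state on $B(\ell^2(\G))$. Choosing a transversal $T$ for $\Lambda \backslash \G$ I would then identify $\ell^2(\G) \cong \ell^2(\Lambda) \otimes \ell^2(T)$, under which $\lambda_\G(h) = \lambda_\Lambda(h) \otimes 1$ for $h \in \Lambda$, so that $x \mapsto x \otimes 1$ is a unital $\Lambda$-equivariant embedding $B(\ell^2(\Lambda)) \hookrightarrow B(\ell^2(\G))$. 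Restricting $\rho \circ \varphi$ along this embedding produces a $\Lambda$-invariant state on $B(\ell^2(\Lambda))$, i.e.\ $\lambda_\Lambda$ is an amenable representation of $\Lambda$, which is equivalent to amenability of $\Lambda$ by the discussion in Section~2.4.
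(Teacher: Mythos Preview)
Your proof is correct and follows essentially the same approach as the paper. For $(\Leftarrow)$ you make explicit exactly the $\Gamma$-map $\ell^\infty(\Gamma)\to\ell^\infty(\Gamma/\Lambda)$ the paper invokes, and compose with the diagonal expectation $\bE$ in the same way. For $(\Rightarrow)$ both arguments observe that any $\Gamma$-map $\varphi:B(\ell^2(\Gamma))\to\lambda_\Gamma(\Lambda)'$ is automatically $\Lambda$-invariant and then post-compose with an arbitrary state; the only difference is that the paper restricts the resulting $\Lambda$-invariant state to $\ell^\infty(\Gamma)$ to obtain a $\Lambda$-invariant mean directly, whereas you restrict along the embedding $B(\ell^2(\Lambda))\hookrightarrow B(\ell^2(\Gamma))$ coming from a transversal---a slightly longer but equally valid route to the same conclusion.
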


\proof
Suppose $\Lambda$ is amenable. Then there is a $\Gamma$-map from $\ell^\infty(\Gamma)$ to $\ell^\infty(\Gamma/\Lambda)$. Composing this map with the $\Gamma$-projection $B(\ell^2 (\G))\to \ll(\G)$ gives a $\Gamma$-map $B(\ell^2 (\G))\to \ell^\infty(\Gamma/\Lambda)$. Since $\ell^\infty(\Gamma/\Lambda)\subseteq \pi(\Lambda)'$, it follows $\Lambda$ is $\lambda_\Gamma$-amenable. 

Conversely, suppose $\Lambda$ is $\lambda_\Gamma$-amenable, and suppose $\varphi: B(\Hil_\pi) \to \lambda_\Gamma(\Lambda)'$ is a $\Gamma$-map. Then for any state $\rho$ on $\lambda_\Gamma(\Gamma)'$, the composition $\rho\circ \varphi|_{\ll(\G)}$ is a $\Lambda$-invariant mean on $\ll(\G)$. Hence $\Lambda$ is amenable.
\endproof

\begin{thm}\label{pi-amen<->triv-acting}
Let $\pi$ be a unitary representation of $\Gamma$. 
A normal subgroup $\Lambda\trianglelefteq \G$ is $\pi$-amenable 
if and only if it acts trivially on $\cB_\pi$.
\end{thm}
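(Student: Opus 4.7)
The plan is to use $\pi$-rigidity of $\cB_\pi$ (Proposition \ref{min-image-properties}(4)) together with the observation that $\Lambda$ acting trivially on $\cB_\pi$ is equivalent to the containment $\cB_\pi \subseteq \pi(\Lambda)'$ as subspaces of $B(\cH_\pi)$. Recall that $\pi(\Lambda)'$ is itself a $\Gamma$-invariant operator system, since $\Lambda$ is normal in $\Gamma$.

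The converse direction is immediate: if $\cB_\pi \subseteq \pi(\Lambda)'$, then any $\Gamma$-projection $\psi_0 : B(\cH_\pi) \to \cB_\pi$ is automatically a $\Gamma$-map with image in $\pi(\Lambda)'$, witnessing $\pi$-amenability of $\Lambda$.

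For the forward direction, assume $\varphi : B(\cH_\pi) \to \pi(\Lambda)'$ is a $\Gamma$-map, and fix a $\Gamma$-projection $\psi_0 : B(\cH_\pi) \to \cB_\pi$. The composition $\psi_0 \circ \varphi|_{\cB_\pi} : \cB_\pi \to \cB_\pi$ is a $\Gamma$-map (it is u.c.p.\ and $\Gamma$-equivariant as a composition of such maps, and lands in $\cB_\pi$ by construction of $\psi_0$); hence it equals $\id_{\cB_\pi}$ by $\pi$-rigidity. Now for $h \in \Lambda$ and $x \in \cB_\pi$, the element $\varphi(x)$ lies in $\pi(\Lambda)'$ and so commutes with $\pi(h)$. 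Combined with the $\Gamma$-equivariance of $\varphi$ this gives
\[
\varphi(\Ad_{\pi(h)} x) = \Ad_{\pi(h)} \varphi(x) = \varphi(x).
\]
Applying $\psi_0$ and invoking the rigidity identity $\psi_0 \circ \varphi|_{\cB_\pi} = \id$, along with the fact that $\cB_\pi$ is $\Gamma$-invariant so $\Ad_{\pi(h)} x \in \cB_\pi$, we conclude
\[
\Ad_{\pi(h)} x = \psi_0(\varphi(\Ad_{\pi(h)} x)) = \psi_0(\varphi(x)) = x,
\]
so $\Lambda$ acts trivially on $\cB_\pi$. No serious obstacle is anticipated; the only subtle point is recognizing that even though $\varphi$ itself need not fix $\cB_\pi$ pointwise, $\pi$-rigidity forces the composite $\psi_0 \circ \varphi$ to collapse to the identity on $\cB_\pi$, and this is precisely what converts ``$\varphi(x) \in \pi(\Lambda)'$'' into ``$x$ is fixed by $\Ad_{\pi(h)}$''.
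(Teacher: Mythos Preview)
Your proof is correct and follows essentially the same approach as the paper. The only minor difference is that for the forward direction the paper invokes $\pi$-essentiality (Proposition~\ref{min-image-properties}(2)) directly to conclude that $\varphi|_{\cB_\pi}$ is injective, whereas you invoke $\pi$-rigidity (Proposition~\ref{min-image-properties}(4)) to obtain a left inverse $\psi_0$ for $\varphi|_{\cB_\pi}$; either route yields the cancellation step $\varphi(\Ad_{\pi(h)}x)=\varphi(x)\Rightarrow \Ad_{\pi(h)}x=x$.
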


\proof
Suppose $\lambda\trianglelefteq \G$ is $\pi$-amenable, and let 
$\varphi: B(\Hil_\pi) \to \pi(\Lambda)'$ be a $\Gamma$-map. 
By Proposition \ref{min-image-properties}, $\varphi|_{\cB_\pi}$ is an embedding. 
Since $\Lambda$ acts trivially on $\pi(\Lambda)'$, 
it follows that $\Lambda$ also acts trivially on $\cB_\pi$.

Conversely, suppose $\Lambda$ acts trivially on $\cB_\pi$. 
It follows that $\cB_\pi\subseteq \pi(\Lambda)'$. 
Hence there is a $\Gamma$-map $B(\Hil_\pi) \to \pi(\Lambda)'$, namely 
any $\Gamma$-projection $B(\Hil_\pi) \to \cB_\pi$.
\endproof

\begin{cor}\label{pi-amen-rad}
The kernel of the action $\G\act\cB_\pi$ is the unique 
maximal $\pi$-amenable normal subgroup of $\Gamma$, 
and it contains all $\pi$-amenable normal subgroups of $\Gamma$. 
\end{cor}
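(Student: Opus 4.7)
The plan is to apply Theorem \ref{pi-amen<->triv-acting} directly; this corollary is essentially a bookkeeping consequence of the equivalence ``$\pi$-amenable $\Leftrightarrow$ acts trivially on $\cB_\pi$'' for normal subgroups. So there is no real obstacle, just a clean two-step argument.

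Let $K := \ker(\G\act\cB_\pi)$. Since $K$ is the kernel of the homomorphism $\G\to\mathrm{Aut}(\cB_\pi)$ induced by the action, it is a normal subgroup of $\Gamma$. By construction $K$ acts trivially on $\cB_\pi$, and hence Theorem \ref{pi-amen<->triv-acting} (applied to $K \trianglelefteq \Gamma$) gives that $K$ is $\pi$-amenable.

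Conversely, suppose $\Lambda \trianglelefteq \Gamma$ is any $\pi$-amenable normal subgroup. Then, by the ``only if'' direction of Theorem \ref{pi-amen<->triv-acting}, $\Lambda$ acts trivially on $\cB_\pi$, so $\Lambda \subseteq K$. This shows $K$ contains every $\pi$-amenable normal subgroup of $\Gamma$, and in particular $K$ is the unique maximal such subgroup.
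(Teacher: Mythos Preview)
Your proof is correct and matches the paper's intended approach: the paper states this as an immediate corollary of Theorem \ref{pi-amen<->triv-acting} without further proof, and your two-step argument is precisely the expected unpacking of that implication.
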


\begin{defn}\label{defn-rad_pi}
We call the kernel of the action of $\G\act\cB_\pi$ 
the \emph{$\pi$-amenable radical} of $\Gamma$, 
and denote it by $\Rad_\pi(\Gamma)$.
\end{defn}

\subsection{$\pi$-co-amenability}

\begin{defn}
If $\Lambda \trianglelefteq \G$ is a normal subgroup, say that $\Lambda$ is \emph{$\pi$-co-amenable} if there exists a $\G$-invariant state $\psi: \pi(\Lambda)' \to \C$.
\end{defn}

\begin{prop} \label{coamen implies pi-coamen}
Let $\pi:\G \to B(\Hil_\pi)$ be a unitary representation. Every co-amenable normal subgroup of $\G$ is $\pi$-co-amenable.
\end{prop}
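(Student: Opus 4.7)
The plan is a straightforward averaging argument that reduces matters to invariant means on $\ell^\infty(\G/\Lambda)$, exactly what co-amenability provides. The main observation is that since $\Lambda$ is normal, the commutant $\pi(\Lambda)'$ is $\G$-invariant under $\mathrm{Ad}_{\pi(g)}$, and $\Lambda$ itself acts trivially on $\pi(\Lambda)'$ (because its elements commute with $\pi(\Lambda)$). Hence the action $\G\act\pi(\Lambda)'$ descends to an action of $\G/\Lambda$, and the goal becomes producing a $\G/\Lambda$-invariant state on $\pi(\Lambda)'$.

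To do this I would pick any state $\rho_0$ on $\pi(\Lambda)'$ and form the associated Poisson-type map $\cP:\pi(\Lambda)'\to\ell^\infty(\G/\Lambda)$ defined by
\[
\cP(a)(g\Lambda) \;=\; \rho_0\bigl(\pi(g)^{-1}\,a\,\pi(g)\bigr).
\]
The first thing to check is that this value is independent of the coset representative: if $\lambda\in\Lambda$, then by normality $\pi(g)^{-1}a\pi(g)\in\pi(g^{-1}\Lambda g)'=\pi(\Lambda)'$, so it commutes with $\pi(\lambda)$ and the expression is unchanged when $g$ is replaced by $g\lambda$. This makes $\cP$ a well-defined unital positive map, and one checks from the definition that it intertwines the conjugation action of $\G$ on $\pi(\Lambda)'$ with the left translation action of $\G$ on $\ell^\infty(\G/\Lambda)$.

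Now co-amenability of $\Lambda$ in $\G$ furnishes a $\G$-invariant state (an invariant mean) $m$ on $\ell^\infty(\G/\Lambda)$. Setting $\psi := m\circ \cP : \pi(\Lambda)'\to\bC$ yields a state, and for any $h\in\G$ and $a\in\pi(\Lambda)'$ the equivariance of $\cP$ gives $\cP(\mathrm{Ad}_{\pi(h)}a)=h\cdot\cP(a)$, whence $\psi(\mathrm{Ad}_{\pi(h)}a)=m(h\cdot\cP(a))=m(\cP(a))=\psi(a)$. This is the desired $\G$-invariant state.

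There is no real obstacle here; the only point that requires a moment's attention is the well-definedness of $\cP$ on cosets, which uses normality of $\Lambda$ in an essential way (without normality one only knows $\pi(g)^{-1}a\pi(g)\in\pi(g^{-1}\Lambda g)'$, which need not contain $\pi(\Lambda)$). Everything else is formal.
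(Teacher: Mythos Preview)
Your proof is correct and follows essentially the same route as the paper's: both construct a $\Gamma$-equivariant unital positive map $\pi(\Lambda)'\to\ell^\infty(\Gamma/\Lambda)$ and then compose with the invariant mean coming from co-amenability. The only difference is cosmetic---the paper obtains this map abstractly by invoking $\Gamma/\Lambda$-injectivity of $\ell^\infty(\Gamma/\Lambda)$, whereas you write it down explicitly as the Poisson map associated to an arbitrary state $\rho_0$; your construction is precisely the standard concrete realization of the map the paper obtains by injectivity.
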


\proof
Suppose $\Lambda$ is a co-amenable normal subgroup of $\G$. By normality, there is a canonical $\G/\Lambda$-action on $\pi(\Lambda)'$, and by co-amenability, there is a $\G$-map $\psi: \ell^\infty(\G/\Lambda) \to \C$. Since $\ell^\infty(\G/\Lambda)$ is $\G/\Lambda$-injective, there is a $\G/\Lambda$-map $\varphi: \pi(\Lambda)' \to \ell^\infty(\G/\Lambda)$, which is evidently also a $\G$-map. So $\psi \circ \varphi: \pi(\Lambda)' \to \C$ is a $\G$-map.
\endproof

\begin{prop}
Let $\pi:\G \to B(\Hil_\pi)$ be a unitary representation such that $\cB_\pi \cong C(\partial_F \G)$. Then a normal subgroup is co-amenable iff it is $\pi$-co-amenable.
\end{prop}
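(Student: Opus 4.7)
The ``only if'' direction is Proposition \ref{coamen implies pi-coamen}, so what remains is the converse under the hypothesis $\cB_\pi \cong C(\partial_F \G)$. My plan is, starting from a $\G$-invariant state $\psi : \pi(\Lambda)' \to \bC$ witnessing $\pi$-co-amenability, to manufacture a $\G/\Lambda$-invariant probability measure on $\partial_F(\G/\Lambda)$. Once such a measure is in hand, since $\G/\Lambda \act \partial_F(\G/\Lambda)$ is minimal and strongly proximal, a standard argument forces $\partial_F(\G/\Lambda)$ to be a single point: the $\G/\Lambda$-orbit of the measure is a singleton $\{\mu\}$, strong proximality puts a point mass $\delta_x$ in its closure, hence $\mu = \delta_x$ with $x$ a $\G/\Lambda$-fixed point, and minimality then collapses the space to $\{x\}$. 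Thus $\G/\Lambda$ is amenable, which for a normal subgroup is equivalent to co-amenability of $\Lambda$ in $\G$.

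To produce the measure I would bridge through the $\Lambda$-fixed part of $\cB_\pi$. First, view $\partial_F(\G/\Lambda)$ as a $\G$-space via the quotient $\G \twoheadrightarrow \G/\Lambda$; it remains minimal and strongly proximal (both properties are preserved by pullback), so it is a $\G$-boundary. The universal property of $\partial_F \G$ then yields a $\G$-equivariant continuous surjection $\partial_F \G \twoheadrightarrow \partial_F(\G/\Lambda)$, and dualizing produces a $\G$-equivariant embedding
\[
C(\partial_F(\G/\Lambda)) \hookrightarrow C(\partial_F \G) \cong \cB_\pi
\]
whose image lies in the $\Lambda$-fixed subalgebra $\cB_\pi^\Lambda$, because $\Lambda$ acts trivially on $\partial_F(\G/\Lambda)$. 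Using the concrete inclusion $\cB_\pi \subseteq B(\Hil_\pi)$ coming from the definition of the FH-boundary, an element $a \in \cB_\pi$ satisfies $\Ad_{\pi(h)}(a) = a$ for every $h \in \Lambda$ exactly when $a$ commutes with $\pi(\Lambda)$, so $\cB_\pi^\Lambda \subseteq \pi(\Lambda)'$. Composing, I obtain a $\G$-equivariant embedding $C(\partial_F(\G/\Lambda)) \hookrightarrow \pi(\Lambda)'$.

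Finally, restricting $\psi$ along this embedding gives a $\G$-invariant state on $C(\partial_F(\G/\Lambda))$, and since $\Lambda$ acts trivially this is the same data as a $\G/\Lambda$-invariant Borel probability measure on $\partial_F(\G/\Lambda)$, at which point the boundary-rigidity argument of the first paragraph concludes. The step I expect to be the most delicate is verifying that $\partial_F(\G/\Lambda)$, viewed as a $\G$-space, genuinely qualifies as a $\G$-boundary (so that the universal property of $\partial_F \G$ applies) and that the resulting embedding really factors through $\cB_\pi^\Lambda$; once those two points are checked, the rest is a direct assembly of standard facts about the Furstenberg boundary, the definition of $\pi(\Lambda)'$, and the triviality of Furstenberg boundaries of amenable groups.
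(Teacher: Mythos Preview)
Your argument is correct, but it takes a different route from the paper's. The paper avoids $\partial_F(\G/\Lambda)$ entirely: from $\cB_\pi \cong C(\partial_F\G) = \cB_{\lambda_\G}$ and Proposition~\ref{prop:weak-contmnt-->map-bnds} it obtains a $\G$-map $\varphi : B(\ell^2(\G)) \to B(\cH_\pi)$, then observes that the canonical copy of $\ell^\infty(\G/\Lambda)$ inside $\ell^\infty(\G) \subseteq B(\ell^2(\G))$ is $\Lambda$-fixed, hence $\varphi$ sends it into $\pi(\Lambda)'$; composing with $\psi$ produces a $\G$-invariant state on $\ell^\infty(\G/\Lambda)$, which is precisely the definition of co-amenability. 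Your approach instead embeds $C(\partial_F(\G/\Lambda))$ into $\cB_\pi$ via the universal property of $\partial_F\G$, lands in $\pi(\Lambda)'$ for the same reason ($\Lambda$-fixedness), and then uses the invariant-measure-on-a-boundary argument to deduce amenability of $\G/\Lambda$. Both proofs share the core mechanism---push a $\Lambda$-trivial object into $\pi(\Lambda)'$ and restrict $\psi$---but the paper's version is shorter and reaches the definition of co-amenability directly, while yours trades that directness for a more ``dynamical'' conclusion via triviality of $\partial_F(\G/\Lambda)$. Your concern about $\partial_F(\G/\Lambda)$ being a $\G$-boundary is unwarranted: minimality and strong proximality only see orbits and orbit closures in $\pr(X)$, which are unchanged under pullback along the surjection $\G \twoheadrightarrow \G/\Lambda$.
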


\proof
The forward direction follows from Proposition \ref{coamen implies pi-coamen}. For the converse, suppose $\Lambda$ is a normal $\pi$-co-amenable subgroup of $\G$, so that there exists a $\G$-map $\psi: \pi(\Lambda)' \to \C$. Since $\cB_\pi = \partial_F \G$, by Proposition \ref{prop:weak-contmnt-->map-bnds} there is a $\Gamma$-map $\varphi: B(\ell^2(\G)) \to  B(\Hil_\pi)$. The canonical embedding $\ell^\infty(\G/\Lambda) \hookrightarrow \ell^\infty(\G)$ identifies $\ell^\infty(\G/\Lambda)$ with a $\Gamma$-invariant $C^*$-subalgebra of $B(\ell^2(\G))$. Under this identification, for $f \in \ell^\infty(\G/\Lambda)$ and $s \in \Lambda$, using the fact that $\Lambda$ acts trivially on $\ell^\infty(\G/\Lambda)$, we have 
\[
\pi(s)\varphi(f) \pi(s^{-1}) = \varphi(\lambda_\Gamma(s)f\lambda_\Gamma(s^{-1})) = \varphi(f),
\]
which implies $\varphi$ maps $\ell^\infty(\G/\Lambda)$ into $\pi(\Lambda)' $. Hence, composing $\varphi$ with $\psi$ yields a $\G$-invariant state on $\ell^\infty(\G/\Lambda)$.
\endproof

\begin{cor}
A normal subgroup of $\G$ is co-amenable iff it is $\lambda_\G$-co-amenable.
\end{cor}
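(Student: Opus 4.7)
The plan is to observe that this corollary is an immediate application of the previous proposition to the specific case $\pi = \lambda_\G$. So the only thing I need to verify is the hypothesis of the proposition, namely that $\cB_{\lambda_\G} \cong C(\partial_F \G)$.

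This identification is precisely the content of the first example appearing after the definition of the FH-boundary, where it was stated that $\cB_{\lambda_\G} = C(\partial_F\G)$. Alternatively, it follows from Theorem \ref{thm:exact--w-cntmnt-reg<-->bnd-Furs}: the left regular representation is trivially weakly contained in itself, so the first assertion of that theorem gives $\cB_{\lambda_\G} \cong C(\partial_F \G)$.

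With the hypothesis verified, applying the preceding proposition with $\pi = \lambda_\G$ yields that a normal subgroup $\Lambda \trianglelefteq \G$ is co-amenable if and only if it is $\lambda_\G$-co-amenable. There is essentially no obstacle here; the work is all contained in the preceding proposition, where both directions were established (the forward direction via Proposition \ref{coamen implies pi-coamen}, and the converse by using a $\Gamma$-map $B(\ell^2(\G)) \to B(\Hil_\pi)$ provided by weak containment together with the fact that $\Lambda$ acts trivially on $\ell^\infty(\G/\Lambda)$).
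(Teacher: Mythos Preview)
Your proposal is correct and matches the paper's approach exactly: the corollary is stated without proof because it is the immediate specialization of the preceding proposition to $\pi = \lambda_\G$, using the already-established fact $\cB_{\lambda_\G} = C(\partial_F \G)$.
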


\begin{prop}\label{irred->coamen}
Let $\pi$ be a unitary representation of $\Gamma$, and 
let $\Lambda \trianglelefteq \G$ be a normal subgroup. 
If $\pi|_\Lambda$ is irreducible, then $\Lambda$ is $\pi$-co-amenable.
\end{prop}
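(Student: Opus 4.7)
The plan is to invoke Schur's lemma and observe that irreducibility collapses the commutant to scalars, making $\pi$-co-amenability automatic.

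First, I would note that since $\Lambda$ is normal in $\G$, for any $g \in \G$ we have $\pi(g)\pi(\Lambda)\pi(g)^{-1} = \pi(g\Lambda g^{-1}) = \pi(\Lambda)$, so conjugation by $\pi(g)$ preserves the commutant $\pi(\Lambda)'$. This is what is implicit in the definition of $\pi$-co-amenability and must be checked to ensure the $\G$-action on $\pi(\Lambda)'$ is well-defined.

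Next, since $\pi|_\Lambda$ is an irreducible unitary representation on $\cH_\pi$, Schur's lemma gives $\pi(\Lambda)' = \C \cdot I$. The only state on $\C \cdot I$ is $\psi(cI) = c$, and since the $\G$-action on $\pi(\Lambda)'$ is by inner automorphisms $\Ad_{\pi(g)}$, which fix $I$ and therefore every scalar multiple of $I$, the state $\psi$ is automatically $\G$-invariant. Hence $\Lambda$ is $\pi$-co-amenable.

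There is no real obstacle here; this is essentially a direct consequence of Schur's lemma together with the fact that inner automorphisms act trivially on scalars. The proof should be only a few lines.
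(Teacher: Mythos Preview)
Your proposal is correct and follows exactly the same approach as the paper: observe that irreducibility of $\pi|_\Lambda$ forces $\pi(\Lambda)' = \bC 1$ by Schur's lemma, and then the unique state on $\bC 1$ is trivially $\Gamma$-invariant. The paper's proof is a one-liner stating precisely this; your version just spells out a bit more detail.
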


\begin{proof}
If $\pi|_\Lambda$ is irreducible, then $\pi(\Lambda)' = \bC1$, and obviously 
there is a $\G$-invariant state on $\pi(\Lambda)'$.
\end{proof}

\begin{prop}\label{amen+coamen}
For a normal subgroup $\Lambda \trianglelefteq \G$, 
we have that $\pi$ is amenable if and only if $\Lambda$ is $\pi$-amenable and $\pi$-co-amenable.
\end{prop}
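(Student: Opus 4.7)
The plan is to handle both directions by composing or restricting $\Gamma$-equivariant maps; no subtle argument is needed, since both conditions are built from the same pieces.

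For the forward direction, assume $\pi$ is amenable, so there exists a $\Gamma$-invariant state $\phi : B(\cH_\pi) \to \bC$. By Proposition~\ref{pi-amen-equiv-conds}, every normal subgroup, in particular $\Lambda$, is $\pi$-amenable. For $\pi$-co-amenability, I would simply restrict $\phi$ to the $\Gamma$-invariant subalgebra $\pi(\Lambda)' \subseteq B(\cH_\pi)$ (which is $\Gamma$-invariant precisely because $\Lambda$ is normal). The restriction is a $\Gamma$-invariant state on $\pi(\Lambda)'$, giving the required $\pi$-co-amenability.

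For the converse, suppose $\Lambda$ is both $\pi$-amenable and $\pi$-co-amenable. By $\pi$-amenability, fix a $\Gamma$-map $\varphi : B(\cH_\pi) \to \pi(\Lambda)'$, and by $\pi$-co-amenability, fix a $\Gamma$-invariant state $\psi : \pi(\Lambda)' \to \bC$. Then the composition $\psi \circ \varphi : B(\cH_\pi) \to \bC$ is a $\Gamma$-invariant state on $B(\cH_\pi)$, which is precisely the definition of $\pi$ being amenable.

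There is no real obstacle here; the only point to double-check is that $\pi(\Lambda)'$ is $\Gamma$-invariant under the inner $\mathrm{Ad}_{\pi(\cdot)}$-action, which follows from normality of $\Lambda$ in $\Gamma$ (for $g \in \Gamma$ and $x \in \pi(\Lambda)'$, one has $\pi(g)x\pi(g^{-1}) \in \pi(g\Lambda g^{-1})' = \pi(\Lambda)'$), so that the restriction and composition above are genuinely $\Gamma$-equivariant.
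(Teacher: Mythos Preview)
Your forward direction is identical to the paper's. For the converse, however, you take a more direct route: you compose the $\Gamma$-map $\varphi : B(\cH_\pi) \to \pi(\Lambda)'$ coming from $\pi$-amenability directly with the $\Gamma$-invariant state $\psi : \pi(\Lambda)' \to \bC$ coming from $\pi$-co-amenability. The paper instead invokes Theorem~\ref{pi-amen<->triv-acting} to conclude that $\Lambda$ acts trivially on $\cB_\pi$, hence $\cB_\pi \subseteq \pi(\Lambda)'$, and then restricts $\psi$ to $\cB_\pi$ before composing with a $\Gamma$-projection $B(\cH_\pi) \to \cB_\pi$. Your argument is genuinely shorter and more elementary, avoiding the boundary machinery altogether; the paper's version has the (mild) expository advantage of keeping the FH-boundary in the picture, but logically your direct composition is cleaner and equally valid.
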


\proof
$(\Rightarrow)$ This follows from Proposition \ref{pi-amen-equiv-conds} and the obvious fact that a restriction of a $\G$-invariant state on $B(\Hil_\pi)$ to $\pi(\Lambda)'$ is a $\G$-invariant state on $\pi(\Lambda)'$.

$(\Leftarrow)$ Suppose $\Lambda$ is $\pi$-amenable, 
then by Theorem \ref{pi-amen<->triv-acting}, 
$\Lambda$ acts trivially on $\cB_\pi$, 
so that $\cB_\pi \subseteq \pi(\Lambda)'$. 
Thus any $\G$-invariant state on $\pi(\Lambda)'$ restricts to a 
$\G$-invariant state on $\cB_\pi$. 
Composing with a $\G$-idempotent 
$B(\Hil_\pi) \to \cB_\pi$ 
gives a $\G$-invariant state on $B(\Hil_\pi)$.
\endproof

\begin{cor} \label{norm subgp not pi-amen}
Let $\Lambda \trianglelefteq \G$ be a normal subgroup, and 
suppose $\pi$ is a non-amenable unitary representation of $\Gamma$ such that the 
restriction $\pi|_\Lambda$ is irreducible.
Then $\Lambda$ is not $\pi$-amenable.
\end{cor}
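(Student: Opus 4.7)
The plan is to derive the corollary immediately from the two preceding propositions (\ref{irred->coamen} and \ref{amen+coamen}) by contraposition. Specifically, I would argue that if $\Lambda$ were $\pi$-amenable, then combined with $\pi$-co-amenability (which is forced by the irreducibility hypothesis), this would make $\pi$ amenable, contradicting the assumption.

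In detail, first I would invoke Proposition \ref{irred->coamen}: since $\pi|_\Lambda$ is irreducible, the commutant $\pi(\Lambda)'$ reduces to $\bC 1$, so any state is tautologically $\G$-invariant, and hence $\Lambda$ is $\pi$-co-amenable. Next, assume for contradiction that $\Lambda$ is also $\pi$-amenable. Then Proposition \ref{amen+coamen} directly yields that $\pi$ is an amenable representation of $\Gamma$, which contradicts the standing hypothesis that $\pi$ is non-amenable. Therefore $\Lambda$ cannot be $\pi$-amenable.

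There is no real obstacle here, since the work has been done in the two prior propositions. The only thing worth being careful about is making sure the hypotheses line up: $\Lambda$ must be normal for $\pi(\Lambda)'$ to carry the induced $\G/\Lambda$-action used implicitly in Proposition \ref{amen+coamen}, and this is part of the assumption. Thus the argument is a short contrapositive application of \ref{amen+coamen} once \ref{irred->coamen} supplies $\pi$-co-amenability.
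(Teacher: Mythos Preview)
Your proposal is correct and follows essentially the same approach as the paper: invoke Proposition~\ref{irred->coamen} to get $\pi$-co-amenability from irreducibility of $\pi|_\Lambda$, then argue by contradiction using Proposition~\ref{amen+coamen} that $\pi$-amenability of $\Lambda$ would force $\pi$ to be amenable.
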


\begin{proof}
Since $\pi|_\Lambda$ is irreducible, $\Lambda$ is $\pi$-co-amenable, by 
Proposition \ref{irred->coamen}. 
Now, if $\Lambda$ is moreover $\pi$-amenable, then Proposition \ref{amen+coamen} 
implies amenability of $\pi$, which contradicts our assumption.
\end{proof}


\section{Extending boundary actions}
One of the main advantages of topological boundaries compared to their measurable counterparts is their extension properties. If $\Lambda$ is a normal subgroup of $\Gamma$, then the action $\Lambda\act \partial_F\Lambda$ extends to an action $\Gamma\act \partial_F\Lambda$. We prove a similar result in this section. Again, hidden in this result is a distinguished property of the regular representation, namely that it is the GNS representation of a pdf that is invariant under any automorphism.

\begin{defn}
Let $\pi$ be a unitary representation of $\Gamma$. An automorphism $\alpha$ of $\Gamma$ is called a \emph{$\pi$-automorphism} if $\alpha$ extends to a $*$-automorphism of the $C^*$-algebra $C^*_\pi(\Gamma)$. We denote by $\aut_\pi(\Gamma)$ the set of all $\pi$-automorphisms of $\Gamma$.
\end{defn}

\begin{prop}
We have $\aut_{\lambda_\Gamma}(\Gamma) = \aut(\Gamma)$.
\end{prop}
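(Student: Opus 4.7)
The inclusion $\aut_{\lambda_\Gamma}(\Gamma) \subseteq \aut(\Gamma)$ is immediate from the definition, so the content of the proposition is the reverse inclusion: every group automorphism of $\Gamma$ extends to a $*$-automorphism of $C^*_\lambda(\Gamma)$. The natural strategy is to implement the extension spatially, using the fact that an automorphism of $\Gamma$ simply permutes the canonical orthonormal basis $\{\delta_g : g\in\Gamma\}$ of $\ell^2(\Gamma)$.

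Concretely, given $\alpha\in\aut(\Gamma)$, I would define an operator $U_\alpha : \ell^2(\Gamma)\to\ell^2(\Gamma)$ by setting $U_\alpha \delta_g := \delta_{\alpha(g)}$ and extending linearly; since $\alpha$ is a bijection of $\Gamma$, $U_\alpha$ is a (bijective) unitary. Then $\Ad_{U_\alpha}$ is a $*$-automorphism of $B(\ell^2(\Gamma))$, and the only nontrivial point is to check that its restriction to $C^*_\lambda(\Gamma)$ maps this subalgebra into itself. For this, it suffices to evaluate on generators: for any $g,h\in\Gamma$,
\[
U_\alpha \lambda_\Gamma(g) U_\alpha^* \delta_h \;=\; U_\alpha \lambda_\Gamma(g) \delta_{\alpha^{-1}(h)} \;=\; U_\alpha \delta_{g\alpha^{-1}(h)} \;=\; \delta_{\alpha(g)h} \;=\; \lambda_\Gamma(\alpha(g))\delta_h,
\]
so $\Ad_{U_\alpha}(\lambda_\Gamma(g)) = \lambda_\Gamma(\alpha(g))$. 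By linearity and norm-continuity, $\Ad_{U_\alpha}$ preserves $C^*_\lambda(\Gamma)$ and its inverse $\Ad_{U_{\alpha^{-1}}}$ is the inverse $*$-automorphism. This $*$-automorphism of $C^*_\lambda(\Gamma)$ sends $\lambda_\Gamma(g)\mapsto \lambda_\Gamma(\alpha(g))$, i.e. it extends $\alpha$ in the sense of the definition, so $\alpha \in \aut_{\lambda_\Gamma}(\Gamma)$.

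There is no real obstacle in this argument; the only delicate point is to make sure the sign/side conventions match the paper's (the left regular representation is $\lambda_\Gamma(g)\delta_h = \delta_{gh}$), and the computation above confirms that the ``obvious'' unitary implementation works. One could alternatively phrase this more abstractly by observing that $\alpha$ preserves the canonical trace $\tau_\lambda$ (since $\alpha(e)=e$) and then invoking GNS uniqueness, but the direct spatial argument is shorter and self-contained.
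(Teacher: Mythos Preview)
Your argument is correct. The paper takes the alternative route you mention at the end: it observes that the canonical trace $\tau(x)=\langle x\delta_e,\delta_e\rangle$ is preserved by $\alpha$, so GNS uniqueness yields a $*$-isomorphism $C^*_{\lambda\circ\alpha}(\Gamma)\cong C^*_\lambda(\Gamma)$ sending $\lambda(\alpha(s))\mapsto\lambda(s)$. Your spatial implementation via $U_\alpha\delta_g=\delta_{\alpha(g)}$ is really the same proof unwound, since the GNS representation of $\tau$ is the left regular representation and the intertwining unitary produced by GNS uniqueness is precisely your $U_\alpha$; your version has the advantage of being fully explicit and avoiding any appeal to GNS machinery, while the paper's phrasing makes clearer why this is special to $\lambda_\Gamma$ (the trace is automorphism-invariant).
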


\begin{proof}
If $\alpha$ is an automorphism of $\Gamma$, then the GNS representation of the state $\tau(x) = \langle x \delta_e, \delta_e \rangle$ on $C_{\lambda \circ \alpha}^*(\Gamma)$ implements a $*$-isomorphism between $C_{\lambda \circ \alpha}^*(\Gamma)$ and $C^*_\lambda(\Gamma)$ such that $\lambda(\alpha(s)) \mapsto \lambda(s)$ for all $s \in \Gamma$.
\end{proof}

Note that $\alpha\in\aut_\pi(\Gamma)$ iff $\pi\circ\alpha$ is weakly equivalent to $\pi$, iff the automorphism on $C^*(\Gamma)$ induced by $\alpha$ leaves the ideal $I_\pi$ (i.e., the kernel of the canonical $*$-homomorphism $C^*(\Gamma) \to C^*_\pi(\Gamma)$) invariant. It is clear from the definition (or any of these equivalent descriptions) that $\aut_\pi(\Gamma)$ is a subgroup of $\aut(\Gamma)$, and that $\aut_\pi(\Gamma) = \aut_{\pi\circ\alpha}(\Gamma)$ for all $\alpha\in \aut_\pi(\Gamma)$.

Obviously any inner automorphism is a $\pi$-automorphism for any representation $\pi$. Thus, via inner automorphisms, $\Gamma/\cZ(\Gamma)$ is identified with a normal subgroup of $\aut_\pi(\Gamma)$. It follows from Proposition \ref{finconjclass-trivialaction} that $\cZ(\Gamma)$ acts trivially on the FH-boundary of any representation $\pi$, hence the action $\Gamma\act \cB_\pi$ factors through $\Gamma/\cZ(\Gamma)$.

\begin{thm}\label{thm:ext-bnd-act}
Let $\pi$ be a unitary representation of $\Gamma$. Then the action $\Gamma\act \cB_\pi$ extends to an action $\aut_\pi(\Gamma)\act \cB_\pi$. 
\end{thm}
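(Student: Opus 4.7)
The plan is to construct, for each $\alpha \in \aut_\pi(\Gamma)$, a canonical $*$-automorphism $T_\alpha$ of $\cB_\pi$ satisfying the intertwining identity
\[
T_\alpha \circ \Ad_{\pi(g)} = \Ad_{\pi(\alpha(g))} \circ T_\alpha \qquad (g \in \Gamma),
\]
and to verify that $\alpha \mapsto T_\alpha$ is a group homomorphism $\aut_\pi(\Gamma)\to\mathrm{Aut}(\cB_\pi)$ which restricts to the original $\Gamma$-action on the subgroup of inner automorphisms. The displayed intertwining identity is exactly what is needed for the $T_\alpha$, together with the existing $\Gamma$-action, to assemble into a genuine action of $\aut_\pi(\Gamma)$ on $\cB_\pi$.

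For the construction, fix $\alpha$ and let $\tilde\alpha$ be the $*$-automorphism of $C^*_\pi(\Gamma)$ with $\tilde\alpha(\pi(g)) = \pi(\alpha(g))$, which exists by definition of $\aut_\pi(\Gamma)$. By injectivity of $B(\cH_\pi)$, the map $\tilde\alpha$ extends to a (non-canonical) u.c.p.\ map $\Phi_\alpha:B(\cH_\pi)\to B(\cH_\pi)$. Since $\tilde\alpha$ is a $*$-homomorphism, $C^*_\pi(\Gamma)$ lies in the multiplicative domain of $\Phi_\alpha$, so
\[
\Phi_\alpha(\pi(g)x\pi(g^{-1})) = \pi(\alpha(g))\Phi_\alpha(x)\pi(\alpha(g^{-1})) \qquad (g\in\Gamma,\ x\in B(\cH_\pi)).
\]
Fixing a $\Gamma$-projection $\phi_0:B(\cH_\pi)\to\cB_\pi$, define $T_\alpha := \phi_0\circ \Phi_\alpha|_{\cB_\pi}:\cB_\pi\to\cB_\pi$. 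Combining the multiplicative-domain identity above with the $\Gamma$-equivariance of $\phi_0$ yields the desired intertwining identity for $T_\alpha$.

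The remaining properties all follow by invoking $\pi$-rigidity (Proposition \ref{min-image-properties}(4)). Carrying out the construction also for $\alpha^{-1}$, the composition $T_\alpha\circ T_{\alpha^{-1}}$ satisfies $T_\alpha T_{\alpha^{-1}}\circ\Ad_{\pi(g)} = \Ad_{\pi(g)}\circ T_\alpha T_{\alpha^{-1}}$; that is, it is a $\Gamma$-map $\cB_\pi\to\cB_\pi$, and hence by rigidity equals $\mathrm{id}_{\cB_\pi}$, and analogously for the reverse composition. Thus $T_\alpha$ is a bijective u.c.p.\ map with u.c.p.\ inverse, hence a completely isometric automorphism of $\cB_\pi$, and in particular a $*$-automorphism. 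The same trick, applied to $T_\alpha\circ T_\beta$ and $T_{\alpha\beta}$ (both of which satisfy the intertwining identity for $\alpha\beta$), gives $T_{\alpha\beta} = T_\alpha\circ T_\beta$, so $\alpha\mapsto T_\alpha$ is a group homomorphism into $\mathrm{Aut}(\cB_\pi)$. Finally, for an inner automorphism $\alpha = \mathrm{Ad}_h$ with $h\in\Gamma$, the map $\Ad_{\pi(h)}|_{\cB_\pi}$ manifestly satisfies the intertwining identity for $\alpha$, so the same uniqueness forces $T_{\mathrm{Ad}_h} = \Ad_{\pi(h)}|_{\cB_\pi}$, i.e., the action of $\aut_\pi(\Gamma)$ restricts to the original $\Gamma$-action.

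The main obstacle is verifying the intertwining property for $T_\alpha$, which relies crucially on $C^*_\pi(\Gamma)$ sitting inside the multiplicative domain of the non-canonical u.c.p.\ extension $\Phi_\alpha$. This is what turns an arbitrary u.c.p.\ extension of $\tilde\alpha$ into a canonical object at the boundary level, via the rigidity of $\cB_\pi$.
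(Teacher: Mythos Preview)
Your proof is correct and follows essentially the same route as the paper: both construct, for each $\alpha\in\aut_\pi(\Gamma)$, a u.c.p.\ map $\cB_\pi\to\cB_\pi$ satisfying the intertwining identity $T_\alpha\circ\Ad_{\pi(g)}=\Ad_{\pi(\alpha(g))}\circ T_\alpha$, and then use $\pi$-rigidity to deduce invertibility, the homomorphism property, and agreement with $\Ad_{\pi(g)}$ on inner automorphisms. The only cosmetic difference is that the paper packages your multiplicative-domain extension argument as an appeal to Corollary~\ref{cor:bndy in inj env} (the map $\cB_\pi\to\cB_{\pi_\alpha}$ coming from weak equivalence of $\pi$ and $\pi_\alpha=\pi\circ\alpha$), whereas you unfold that step explicitly.
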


\begin{proof}
Let $\pi: \Gamma\to\cU(\cH_\pi)$ be a unitary representation and let $\alpha\in \aut_\pi(\Gamma)$. Let ${\pi_\alpha}: \Gamma\to\cU(\cH_{\pi_\alpha})$ be the unitary representation where $\cH_{\pi_\alpha} = \cH_\pi$ and ${\pi_\alpha}(g) = \pi(\alpha(g))$ for $g\in \Gamma$. Then $\pi_\alpha$ is weakly equivalent to $\pi$, and we denote by $\alpha$ the $*$-automorphism on $C^*_\pi(\Gamma)= C^*_{\pi_\alpha}(\Gamma)$ which extends $\pi(g) \mapsto {\pi_\alpha}(g)$. Obviously a subspace of $B(\cH_\pi)$ is $\pi$-invariant iff it is ${\pi_\alpha}$-invariant, and any $\Gamma$-map on $B(\cH_\pi)$ is a $\Gamma$-map on $B(\cH_{\pi_\alpha})$, and vice versa. Hence, it follows that $\cB_\pi$ considered as a subspace of $B(\cH_{\pi_\alpha})$ with the action of $\Gamma$ via ${\pi_\alpha}$ is the FH-boundary of ${\pi_\alpha}$. Since $\pi$ and ${\pi_\alpha}$ are weakly equivalent, by Corollary \ref{cor:bndy in inj env} there is a $*$-automorphism $\Phi_\alpha:\cB_\pi \to \cB_{\pi_\alpha} = \cB_\pi$ which satisfies 
\begin{equation}\label{equiv-ext}
\Phi_\alpha(\pi(g) a \pi(g^{-1})) = \pi_\alpha(g) \Phi_\alpha(a) \pi_\alpha(g^{-1})= \pi(\alpha(g)) \Phi_\alpha(a) \pi(\alpha(g^{-1}))
\end{equation}
for all $g\in\Gamma$ and $a\in \cB_\pi$. Then 
\[\begin{split}
\Phi_{\alpha^{-1}}\circ\Phi_\alpha(\pi(g) a \pi(g^{-1})) 
&= 
\Phi_{\alpha^{-1}}({\pi_\alpha}(g) \Phi_\alpha(a) {\pi_\alpha}(g^{-1}))
\\&= 
\Phi_{\alpha^{-1}}(\pi(\alpha(g)) \Phi_\alpha(a) \pi(\alpha(g^{-1})))
\\&= 
\pi_{\alpha^{-1}}(\alpha(g)) \Phi_{\alpha^{-1}}(\Phi_\alpha(a)) \pi_{\alpha^{-1}}(\alpha(g^{-1}))
\\&= 
\pi(\alpha^{-1}(\alpha(g))) \Phi_{\alpha^{-1}}(\Phi_\alpha(a)) \pi(\alpha^{-1}(\alpha(g^{-1}))
\\&= 
\pi(g) \Phi_{\alpha^{-1}}\circ\Phi_\alpha(a) \pi(g^{-1}) ,
\end{split}\]
which shows $\Phi_{\alpha^{-1}}\circ\Phi_\alpha$ is a $\Gamma$-map on $\cB_\pi$, hence identity by $\pi$-rigidity. Thus, $\Phi_{\alpha^{-1}}= (\Phi_\alpha)^{-1}$. If $\alpha, \beta \in \aut_\pi(\Gamma)$, then straightforward calculations similar to the above shows $\Phi_{\beta^{-1}}\circ\Phi_{\alpha^{-1}}\circ\Phi_{\alpha\beta}$ is a $\Gamma$-map on $\cB_\pi$, which implies $\Phi_\beta\circ\Phi_\alpha = \Phi_{\alpha\beta}$. Hence $\alpha\mapsto \Phi_\alpha$ defines an action $\aut_\pi(\Gamma)\act \cB_\pi$. Now let $g\in \Gamma$, and let $\alpha_g = \Ad_g$ be the corresponding inner automorphism on $\Gamma$. Then for any $h\in \Gamma$ and $a\in \cB_\pi$ we have
\[\begin{split}
\Phi_{\alpha_{g}}\circ \Ad_{\pi(g^{-1})} (\pi(h) a \pi(h^{-1})) 
&= 
\Phi_{\alpha_{g}} (\pi(g^{-1}h) a \pi(h^{-1}g))
\\&= 
\pi(\alpha_{g}(g^{-1}h)) \Phi_{\alpha_{g}} (a) \pi(\alpha_{g}(h^{-1}g))
\\&= 
\pi(hg^{-1}) \Phi_{\alpha_{g}} (a) \pi(gh^{-1})
\\&= 
\pi(h) \pi(g^{-1}) \Phi_{\alpha_{g}} (a) \pi(g)\pi(h^{-1})
\\&= 
\pi(h) \pi(\alpha_{g}(g^{-1})) \Phi_{\alpha_{g}} (a) \pi(\alpha_{g}(g))\pi(h^{-1})
\\&= 
\pi(h)\Phi_{\alpha_{g}} (\pi(g^{-1}) a \pi(g))\pi(h^{-1})
\\&= 
\pi(h)\Phi_{\alpha_{g}}\circ \Ad_{\pi(g^{-1})} (a)\pi(h^{-1}) ,
\end{split}\]
which shows $\Phi_{\alpha_{g}}\circ \Ad_{\pi(g^{-1})}$ is a $\Gamma$-map on $\cB_\pi$, hence identity. Hence $\Phi_{\alpha_{g}} = \Ad_{\pi(g)}$.
\end{proof}

Note that if $\alpha\mapsto \Phi_\alpha, \Psi_\alpha$ are two actions of $\aut_\pi(\Gamma)$ on $\cB_\pi$, both extending the $\Gamma$-action and satisfy \eqref{equiv-ext}, then $\Phi_{\alpha^{-1}} \circ\Psi_\alpha$ is a $\Gamma$-map on $\cB_\pi$ for all $\alpha$, which implies $\Phi_{\alpha^{-1}} = \Psi_\alpha$. Thus, there is a unique action $\aut_\pi(\Gamma)\act \cB_\pi$ that extends the $\Gamma$-action and satisfies \eqref{equiv-ext}.

Next, we identify the kernel of this action. Recall from Definition \ref{defn-rad_pi} that $\Rad_\pi(\Gamma)$ is the kernel of the action $\Gamma \curvearrowright \cB_\pi$.

\begin{lem}
The $\pi$-amenable radical $\Rad_\pi(\Gamma)$ is invariant under any $\pi$-automorphism.
\end{lem}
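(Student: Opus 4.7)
The plan is to argue directly from the equivariance relation~\eqref{equiv-ext} and the fact that $\alpha \mapsto \Phi_\alpha$ is a genuine action (so each $\Phi_\alpha$ is a bijection on $\cB_\pi$, with inverse $\Phi_{\alpha^{-1}}$), both established in Theorem~\ref{thm:ext-bnd-act}.

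Let $\alpha \in \aut_\pi(\Gamma)$ and let $g \in \Rad_\pi(\Gamma)$; I want to show $\alpha(g) \in \Rad_\pi(\Gamma)$, i.e., that $\Ad_{\pi(\alpha(g))}$ acts as the identity on $\cB_\pi$. Given any $b \in \cB_\pi$, set $a = \Phi_{\alpha^{-1}}(b) = (\Phi_\alpha)^{-1}(b) \in \cB_\pi$. Applying \eqref{equiv-ext} to this $a$ yields
\[
\pi(\alpha(g))\,b\,\pi(\alpha(g^{-1})) \;=\; \pi(\alpha(g))\,\Phi_\alpha(a)\,\pi(\alpha(g^{-1})) \;=\; \Phi_\alpha\!\left(\pi(g)\,a\,\pi(g^{-1})\right).
\]
Since $g \in \Rad_\pi(\Gamma)$, we have $\pi(g)\,a\,\pi(g^{-1}) = a$, and therefore the right-hand side equals $\Phi_\alpha(a) = b$. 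As $b \in \cB_\pi$ was arbitrary, $\alpha(g)$ acts trivially on $\cB_\pi$, so $\alpha(g) \in \Rad_\pi(\Gamma)$.

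There is essentially no obstacle: the entire content of the lemma is already encoded in the intertwining identity~\eqref{equiv-ext}, which is exactly the statement that conjugation by $\pi(\alpha(g))$ on $\cB_\pi$ is the $\Phi_\alpha$-conjugate of conjugation by $\pi(g)$. Running this through for $g$ in the kernel immediately puts $\alpha(g)$ in the kernel as well, and the argument is symmetric in $\alpha$ and $\alpha^{-1}$, giving $\alpha(\Rad_\pi(\Gamma)) = \Rad_\pi(\Gamma)$.
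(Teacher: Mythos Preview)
Your proof is correct and follows essentially the same approach as the paper: both use the intertwining relation~\eqref{equiv-ext} together with the fact that $\Phi_\alpha$ is a bijection on $\cB_\pi$ to conclude that $\alpha(g)$ acts trivially. The only cosmetic difference is that the paper writes the computation as $\Phi_\alpha(a) = \Phi_\alpha(\pi(g)a\pi(g^{-1})) = \pi(\alpha(g))\Phi_\alpha(a)\pi(\alpha(g^{-1}))$ for all $a$ and then (implicitly) uses surjectivity of $\Phi_\alpha$, whereas you make the surjectivity explicit by starting from an arbitrary $b$ and choosing $a = \Phi_\alpha^{-1}(b)$.
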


\begin{proof}
Let $g\in \Rad_\pi(\Gamma)$ and $\alpha\in \aut_\pi(\Gamma)$. Then by \eqref{equiv-ext} we have
\[
\Phi_\alpha(a) = \Phi_\alpha(\pi(g) a \pi(g^{-1})) = \pi(\alpha(g)) \Phi_\alpha(a) \pi(\alpha(g^{-1}))
\]
for all $a\in \cB_\pi$, which implies $\alpha(g)\in \Rad_\pi(\Gamma)$.
\end{proof}

Hence, by the above we obtain a group homomorphism 
\[
\kappa : \aut_\pi(\Gamma) \to \aut(\Gamma/\Rad_\pi(\Gamma)) .
\]

\begin{prop}
Let $\pi$ be a unitary representation of $\Gamma$, and let $\aut_\pi(\Gamma)\act \cB_\pi$ be the unique extension of the $\Gamma$-action which satisfies \eqref{equiv-ext}. Then $$\ker(\aut_\pi(\Gamma)\act \cB_\pi) = \ker(\kappa).$$
\end{prop}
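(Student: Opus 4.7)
The plan is to show both inclusions directly, using the equivariance relation \eqref{equiv-ext} together with $\pi$-rigidity (Proposition \ref{min-image-properties}(4)) and the fact that $\Rad_\pi(\Gamma)$ is a normal subgroup of $\Gamma$ (being the kernel of an action).

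First I would show $\ker(\kappa) \subseteq \ker(\aut_\pi(\Gamma) \act \cB_\pi)$. Fix $\alpha \in \ker(\kappa)$, so that $g^{-1}\alpha(g) \in \Rad_\pi(\Gamma)$ for every $g \in \Gamma$. The strategy is to prove that $\Phi_\alpha$ is itself a $\Gamma$-map on $\cB_\pi$ (with respect to the original $\pi$-action), because then $\pi$-rigidity forces $\Phi_\alpha = \id$. Starting from \eqref{equiv-ext}, for $a \in \cB_\pi$,
\[
\Phi_\alpha(\pi(g) a \pi(g^{-1})) = \pi(\alpha(g)) \Phi_\alpha(a) \pi(\alpha(g^{-1})) = \pi(g) \bigl(\pi(g^{-1}\alpha(g)) \Phi_\alpha(a) \pi(\alpha(g^{-1})g)\bigr) \pi(g^{-1}).
\]
Since $g^{-1}\alpha(g) \in \Rad_\pi(\Gamma) = \ker(\Gamma \act \cB_\pi)$, the inner $\Ad_{\pi(g^{-1}\alpha(g))}$ acts as the identity on $\cB_\pi$, so the bracketed expression is just $\Phi_\alpha(a)$. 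This shows $\Phi_\alpha$ commutes with the $\pi$-action, hence $\Phi_\alpha = \id_{\cB_\pi}$.

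Conversely, for $\ker(\aut_\pi(\Gamma) \act \cB_\pi) \subseteq \ker(\kappa)$, take $\alpha$ with $\Phi_\alpha = \id$. Substituting into \eqref{equiv-ext} gives $\pi(g) a \pi(g^{-1}) = \pi(\alpha(g)) a \pi(\alpha(g^{-1}))$ for all $a \in \cB_\pi$, i.e.\ $\Ad_{\pi(g^{-1}\alpha(g))}|_{\cB_\pi} = \id$. This means $g^{-1}\alpha(g) \in \Rad_\pi(\Gamma)$ for every $g$, so $\alpha$ descends to the identity on $\Gamma/\Rad_\pi(\Gamma)$, i.e.\ $\alpha \in \ker(\kappa)$.

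I do not foresee a real obstacle here; the argument is essentially a bookkeeping exercise. The one small subtlety worth double-checking is that for $\alpha \in \ker(\kappa)$ we genuinely have $g^{-1}\alpha(g) \in \Rad_\pi(\Gamma)$ (and not merely $\alpha(g)g^{-1}$); but normality of $\Rad_\pi(\Gamma)$ in $\Gamma$ makes the two conditions equivalent, so both the forward computation and the use of triviality of $\Ad_{\pi(g^{-1}\alpha(g))}$ on $\cB_\pi$ go through cleanly.
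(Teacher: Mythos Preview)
Your proposal is correct and follows essentially the same approach as the paper: both directions are proved by combining the equivariance relation \eqref{equiv-ext} with $\pi$-rigidity, showing first that $\alpha\in\ker(\kappa)$ forces $\Phi_\alpha$ to be a $\Gamma$-map (hence the identity), and conversely that $\Phi_\alpha=\id$ forces $g^{-1}\alpha(g)\in\Rad_\pi(\Gamma)$. The only difference is cosmetic: you insert the intermediate factorization $\pi(g)\bigl(\Ad_{\pi(g^{-1}\alpha(g))}\Phi_\alpha(a)\bigr)\pi(g^{-1})$ explicitly, while the paper jumps straight to the conclusion.
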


\begin{proof}
Let $\alpha\in \ker(\kappa)$, then $g^{-1}\alpha(g) \in \Rad_\pi(\Gamma)$ for all $g\in \Gamma$, and so by \eqref{equiv-ext} we have
\[
\Phi_\alpha(\pi(g) a \pi(g^{-1})) = \pi(\alpha(g)) \Phi_\alpha(a) \pi(\alpha(g^{-1})) =  \pi(g) \Phi_\alpha(a) \pi(g^{-1}),
\]
which implies $\Phi_\alpha$ is $\Gamma$-equivariant, hence the identity, and so in the kernel of the action $\aut_\pi(\Gamma)\act \cB_\pi$.

Conversely, suppose $\alpha\in\aut_\pi(\Gamma)$ acts trivially on $\cB_\pi$. Then again using \eqref{equiv-ext} we have
\[\begin{split}
\pi(g) \,a\, \pi(g^{-1}) &= \Phi_\alpha(\pi(g) a \pi(g^{-1})) \\&= \pi(\alpha(g)) \Phi_\alpha(a) \pi(\alpha(g^{-1})) \\&= \pi(\alpha(g)) \,a\, \pi(\alpha(g^{-1}))
\end{split}\]
for all $g\in\Gamma$ and $a\in \cB_\pi$, which implies $g^{-1}\alpha(g) \in \Rad_\pi(\Gamma)$, hence $\alpha\in \ker(\kappa)$.
\end{proof}

\begin{cor}\label{cor:faithful-act-->faithful-ext}
If the action $\Gamma\act \cB_\pi$ is faithful then so is the extended action $\aut_\pi(\Gamma)\act \cB_\pi$. 
\end{cor}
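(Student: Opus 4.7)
The plan is to derive the corollary directly from the immediately preceding proposition, which identifies $\ker(\aut_\pi(\Gamma)\act \cB_\pi)=\ker(\kappa)$, where
\[
\kappa:\aut_\pi(\Gamma)\to \aut(\Gamma/\Rad_\pi(\Gamma))
\]
is the group homomorphism induced by the invariance of $\Rad_\pi(\Gamma)$ under $\pi$-automorphisms. So the only real task is to trace through what the hypothesis says about $\Rad_\pi(\Gamma)$ and then unpack $\kappa$.

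First I would use the assumption that $\Gamma\act \cB_\pi$ is faithful. By the very definition of the $\pi$-amenable radical (Definition \ref{defn-rad_pi}), $\Rad_\pi(\Gamma)$ is the kernel of this action, and faithfulness therefore forces $\Rad_\pi(\Gamma)=\{e\}$. In particular $\Gamma/\Rad_\pi(\Gamma)$ canonically identifies with $\Gamma$.

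Next I would observe that under this identification the map $\kappa$ becomes the inclusion $\aut_\pi(\Gamma)\hookrightarrow \aut(\Gamma)$: by construction $\kappa(\alpha)$ is the automorphism that $\alpha$ induces on the quotient $\Gamma/\Rad_\pi(\Gamma)$, and when the quotient is $\Gamma$ itself this is just $\alpha$. Consequently $\ker(\kappa)$ consists of the identity alone.

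Finally, combining the previous two steps with the preceding proposition gives $\ker(\aut_\pi(\Gamma)\act\cB_\pi)=\ker(\kappa)=\{\id\}$, which is exactly faithfulness of the extended action. There is no real obstacle here; the corollary is essentially a one-line consequence of the identification of the kernel of the extended action, once one notices that triviality of $\Rad_\pi(\Gamma)$ collapses $\kappa$ to an injection.
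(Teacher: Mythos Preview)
Your argument is correct and is precisely the intended derivation: the paper states the corollary without proof immediately after the proposition identifying $\ker(\aut_\pi(\Gamma)\act\cB_\pi)=\ker(\kappa)$, and your observation that faithfulness forces $\Rad_\pi(\Gamma)=\{e\}$, hence $\kappa$ is the inclusion $\aut_\pi(\Gamma)\hookrightarrow\aut(\Gamma)$ with trivial kernel, is exactly the implicit reasoning.
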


\begin{thm}
Let $\pi$ be a representation of $\Gamma$ and $\Lambda\leq \Gamma$ be a normal subgroup. Then the action $\Lambda\act \cB_{\pi|_\Lambda}$ extends to an action $\Gamma\act \cB_{\pi|_\Lambda}$. If the $\Lambda$-action is faithful, then $$\ker(\Gamma\act \cB_{\pi|_\Lambda}) = C_{\pi(\Gamma)}(\pi(\Lambda)) ,$$ where $C_{\pi(\Gamma)}(\pi(\Lambda))$ is the centralizer of $\pi(\Lambda)$ in $\pi(\Gamma)$.
\end{thm}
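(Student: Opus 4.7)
The plan is to invoke Theorem \ref{thm:ext-bnd-act} on the pair $(\Lambda, \pi|_\Lambda)$, and then push the $\Gamma$-conjugation action on $\Lambda$ through the resulting extension. For each $g \in \Gamma$, let $\alpha_g \in \aut(\Lambda)$ be the automorphism $\alpha_g(h) = ghg^{-1}$. The identity $\pi(g)\pi(h)\pi(g)^{-1} = \pi(ghg^{-1})$ for $h \in \Lambda$ shows that the unitary $\pi(g)$ realizes a unitary (hence weak) equivalence between $\pi|_\Lambda$ and $\pi|_\Lambda \circ \alpha_g$, so $\alpha_g \in \aut_{\pi|_\Lambda}(\Lambda)$. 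Composing the group homomorphism $\Gamma \to \aut_{\pi|_\Lambda}(\Lambda)$, $g \mapsto \alpha_g$, with the extended action $\aut_{\pi|_\Lambda}(\Lambda) \act \cB_{\pi|_\Lambda}$ produced by Theorem \ref{thm:ext-bnd-act} gives the desired action $\Gamma \act \cB_{\pi|_\Lambda}$ via $g \cdot a := \Phi_{\alpha_g}(a)$. For $h \in \Lambda$, the automorphism $\alpha_h$ is inner inside $\Lambda$, and the final calculation in the proof of Theorem \ref{thm:ext-bnd-act} yields $\Phi_{\alpha_h} = \Ad_{\pi(h)}|_{\cB_{\pi|_\Lambda}}$, which is the original $\Lambda$-action; hence the $\Gamma$-action genuinely extends it.

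For the kernel, specialize the defining property \eqref{equiv-ext} of $\Phi_{\alpha_g}$ to
\[
\Phi_{\alpha_g}\bigl(\pi(h)\,a\,\pi(h^{-1})\bigr) = \pi(ghg^{-1})\,\Phi_{\alpha_g}(a)\,\pi(gh^{-1}g^{-1}), \qquad h \in \Lambda,\ a \in \cB_{\pi|_\Lambda}.
\]
If $\pi(g)$ commutes with $\pi(\Lambda)$, then $\pi(ghg^{-1}) = \pi(h)$, so the display reduces to $\Phi_{\alpha_g}(\pi(h) a \pi(h^{-1})) = \pi(h) \Phi_{\alpha_g}(a) \pi(h^{-1})$; thus $\Phi_{\alpha_g}$ is a $\Lambda$-map, and $\Lambda$-rigidity (Proposition \ref{min-image-properties}(4)) forces $\Phi_{\alpha_g} = \id$, i.e.\ $g$ lies in the kernel. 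Conversely, if $\Phi_{\alpha_g} = \id$, set $u_h := gh^{-1}g^{-1}h \in \Lambda$ (an element of $\Lambda$ by normality); multiplying the displayed equality on the left by $\pi(ghg^{-1})^{-1}$ and on the right by $\pi(ghg^{-1})$ rearranges it to $\pi(u_h)\,a\,\pi(u_h)^{-1} = a$ for every $a \in \cB_{\pi|_\Lambda}$. Hence $\Ad_{\pi(u_h)}$ acts trivially on $\cB_{\pi|_\Lambda}$, and the assumed faithfulness of the $\Lambda$-action forces $u_h = e$, i.e.\ $gh = hg$, for every $h \in \Lambda$. Therefore $g \in C_\Gamma(\Lambda)$, and in particular $\pi(g) \in C_{\pi(\Gamma)}(\pi(\Lambda))$.

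The only genuine subtlety lies in setting up the $\Gamma$-action: for $g \in \Gamma \setminus \Lambda$ the operator-theoretic conjugation $\Ad_{\pi(g)}$ need not preserve $\cB_{\pi|_\Lambda}$ inside $B(\cH_\pi)$, because the uniqueness in Proposition \ref{min-image-properties}(3) is only up to isomorphism and not up to literal equality of subspaces. This is why we are forced to pass through the abstract extension $\Phi_{\alpha_g}$ of Theorem \ref{thm:ext-bnd-act} rather than using a naive restriction of $\Ad_{\pi(g)}$. Once that is in place, the kernel computation is a direct application of \eqref{equiv-ext}, $\Lambda$-rigidity, and the faithfulness hypothesis.
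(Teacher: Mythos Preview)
Your proof is correct and follows essentially the same route as the paper: both build the $\Gamma$-action by sending $g$ to the conjugation automorphism $\alpha_g \in \aut_{\pi|_\Lambda}(\Lambda)$ and then invoking Theorem~\ref{thm:ext-bnd-act}. The only difference is cosmetic: for the kernel the paper cites Corollary~\ref{cor:faithful-act-->faithful-ext} (faithfulness of $\Lambda\act\cB_{\pi|_\Lambda}$ implies faithfulness of $\aut_{\pi|_\Lambda}(\Lambda)\act\cB_{\pi|_\Lambda}$, so $\ker(\Gamma\act\cB_{\pi|_\Lambda})$ equals the kernel of $\Gamma\to\aut_{\pi|_\Lambda}(\Lambda)$), whereas you unpack that same argument by hand using \eqref{equiv-ext} and $\Lambda$-rigidity. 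Your chain of inclusions $\pi^{-1}(C_{\pi(\Gamma)}(\pi(\Lambda)))\subseteq\ker\subseteq C_\Gamma(\Lambda)\subseteq\pi^{-1}(C_{\pi(\Gamma)}(\pi(\Lambda)))$ closes up correctly, and your closing remark on why one cannot simply restrict $\Ad_{\pi(g)}$ is apt.
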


\begin{proof}
Since $\Lambda$ is normal in $\Gamma$ we have $\Ad_{\pi(g)}(\pi(\Lambda))= \pi(\Lambda)$ for every $g\in \Gamma$, which implies the $C^*$-algebra $C^*_{\pi|_\Lambda}(\Lambda)$ is $\Gamma$-invariant and $\Ad_{\pi(g)}\in \aut_{\pi|_\Lambda}(\Lambda)$ for all $g\in \Gamma$. Thus, this yields a group homomorphism from $\Gamma$ to $\aut_{\pi|_\Lambda}(\Lambda)$. The kernel of this homomorphism is $C_{\pi(\Gamma)}(\pi(\Lambda))$. By Corollary \ref{cor:faithful-act-->faithful-ext}, since the action $\Lambda \act \cB_{\pi|_\Lambda}$ is faithful, the action of the quotient group ${\Gamma}/{C_{\pi(\Gamma)}(\pi(\Lambda))}$ on $\cB_{\pi|_\Lambda}$ is faithful. Hence it follows $\ker(\Gamma\act \cB_{\pi|_\Lambda}) = C_{\pi(\Gamma)}(\pi(\Lambda))$.
\end{proof}


\section{Faithfulness and uniqueness of trace}

One of the most important applications of boundary actions was in settling the problem of characterizing groups with the unique trace property. In fact, it was proved in \cite[Theorem 4.1]{BKKO} that every trace on $C^*_\lambda(\Gamma)$ is supported on $\Rad(\Gamma)$, the amenable radical of $\Gamma$, which as mentioned before, is the kernel of the action $\Gamma\act \partial_F \Gamma$. In this section we prove generalizations of this result in the case of representations with commutative FH-boundaries (e.g., quasi-regular representations).

But note that the regular representation admits several special and universal properties that are unique to that case. So, in order to avoid any restrictive assumptions, first, we need to rephrase the above result in the form of an equivalent statement which does not hide any of those properties.

Observe that a trace $\tau$ on $C^*_\lambda(\Gamma)$ is nothing but a $\Gamma$-map $\tau: C^*_\lambda(\Gamma)\to \bC$, where $\Gamma\act\bC$ is the trivial action. By $\Gamma$-injectivity, any such map extends to a $\Gamma$-map $\tilde\tau: B(\ell^2(\Gamma)) \to \cI_\Gamma(\bC) = C(\partial_F \Gamma) = \cB_\lambda$. Hence, the property that every trace on $C^*_\lambda(\Gamma)$ is supported on $\Rad(\Gamma) = \ker(\Gamma\act \cB_\lambda)$ is equivalent to the property that every $\Gamma$-extension $\tilde\tau: B(\ell^2(\Gamma)) \to \cI_\Gamma(\bC) = C(\partial_F \Gamma) = \cB_\lambda$ of any trace $\tau$ on $C^*_\lambda(\Gamma)$ vanishes on all $g\notin \ker(\Gamma\act \cB_\lambda)$. We see below that the fact that any trace on $C^*_\lambda(\Gamma)$ can be extended to a $\Gamma$-map on $B(\ell^2(\Gamma))$ is the key in the above result.

\begin{defn}
Let $\pi$ be a unitary representation of $\Gamma$. A trace $\tau$ on the $C^*$-algebra $C^*_\pi(\Gamma)$ is called an \emph{extendable trace} if it extends to a $\Gamma$-map $B(\cH_\pi) \to \cB_\pi$.
\end{defn}

One can see that if $\pi$ is weakly contained in the regular representation, then by Theorem \ref{thm:exact--w-cntmnt-reg<-->bnd-Furs} any trace $\tau$ on $C^*_\pi(\Gamma)$ is an extendable trace. In fact, if $\pi$ is a representation such that $\cB_\pi=C(\partial_F\Gamma)$, then by $\Gamma$-injectivity and the fact $C(\partial_F\Gamma)=\cI_\Gamma(\bC)$ it follows every trace on $C^*_\pi(\Gamma)$ is extendable.

\begin{lem}\label{lem:unq-tr}
Let $\pi$ be a unitary representation of $\Gamma$ such that $\cB_\pi$ is commutative. If $\psi: B(\cH_\pi) \to \cB_\pi$ is a $\Gamma$-map and $g\notin \Rad_\pi(\Gamma)$, then $\psi(\pi(g))$ vanishes on some state on $\cB_\pi$.
\end{lem}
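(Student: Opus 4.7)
The plan is to prove the contrapositive: assuming $f := \psi(\pi(g))$ is invertible in $\cB_\pi$, I will show that $g$ acts trivially on $\cB_\pi$, i.e.\ $g \in \Rad_\pi(\Gamma)$. Writing $\cB_\pi = C(X)$, invertibility of $f$ is equivalent to $f$ having no zero on $X$, so the contrapositive produces a point $x_0 \in X$ at which $f(x_0)=0$, and evaluation at $x_0$ is the desired state on $\cB_\pi$ (this in fact gives the slightly stronger conclusion that $f$ vanishes on a \emph{pure} state).

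The first key observation is that $\psi|_{\cB_\pi}$ is a $\Gamma$-map from $\cB_\pi$ to itself, hence equals the identity by $\pi$-rigidity (Proposition \ref{min-image-properties}(4)). Consequently $\psi(a^*a) = a^*a = \psi(a)^*\psi(a)$ for every $a\in\cB_\pi$, placing $\cB_\pi$ inside the multiplicative domain of $\psi$; by Choi's theorem one then has $\psi(ax) = a\,\psi(x)$ and $\psi(xa) = \psi(x)\,a$ for every $a\in\cB_\pi$ and every $x \in B(\cH_\pi)$.

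The heart of the argument is a double computation of $\psi(\pi(g)\,a)$ for an arbitrary $a \in \cB_\pi$. Directly, the multiplicative-domain identity applied to $a$ gives $\psi(\pi(g)a) = f \cdot a$. On the other hand, using $\pi(g) a = (\pi(g) a \pi(g^{-1}))\pi(g) = (g\cdot a)\,\pi(g)$ with $g\cdot a \in \cB_\pi$, and applying the multiplicative-domain identity to the factor $g\cdot a$, one also obtains $\psi(\pi(g)a) = (g\cdot a)\cdot f$. Equating the two expressions and using commutativity of $\cB_\pi$ produces the identity $(g\cdot a - a)\,f = 0$. If $f$ is invertible this forces $g\cdot a = a$ for every $a\in\cB_\pi$, so $g \in \Rad_\pi(\Gamma)$, completing the contrapositive. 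I do not foresee a real obstacle: the proof is a short multiplicative-domain manipulation, and the only subtle point is the identification $\psi|_{\cB_\pi} = \mathrm{id}$, which is precisely $\pi$-rigidity.
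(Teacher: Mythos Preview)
Your argument is correct and takes a genuinely different route from the paper. The paper argues directly: writing $\cB_\pi=C(Z)$, it picks $z\in Z$ with $gz\neq z$, chooses a positive $h\in C(Z)$ with $h(z)=1$ and $h(g^{-1}z)=0$, and then invokes \cite[Lemma~2.2]{HartKal} to conclude $\delta_z(\psi(\pi(g)))=0$. Your proof is self-contained: from $\pi$-rigidity you get $\psi|_{\cB_\pi}=\id$, hence $\cB_\pi$ lies in the multiplicative domain of $\psi$, and the two computations of $\psi(\pi(g)a)$ yield the clean identity $(g\!\cdot\!a-a)\,f=0$ for all $a\in\cB_\pi$; invertibility of $f$ then forces $g\in\Rad_\pi(\Gamma)$. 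In effect, the multiplicative-domain manipulation you carry out is precisely what underlies the cited lemma, so your version unpacks that black box and even yields the slight strengthening that $f$ vanishes on a pure state.

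One point of presentation deserves care. The displayed equation ``$\psi(a^*a)=a^*a=\psi(a)^*\psi(a)$'' is not literally correct if $a^*a$ denotes the product in $B(\cH_\pi)$, since $\cB_\pi$ is generally not a $C^*$-subalgebra of $B(\cH_\pi)$. What you need (and what makes the multiplicative-domain step work) is $\psi(a^*a)=a^*\!\cdot a$ with the Choi--Effros product on the right; this is immediate once one uses the Choi--Effros product induced by $\psi$ itself, and the paper's remark that this product is independent of the choice of projection guarantees it coincides with the given commutative structure on $\cB_\pi$. With that clarification the argument is complete.
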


\begin{proof}
Since $\cB_\pi$ is commutative, it is of the form $C(Z)$ for some compact $\Gamma$-space $Z$. Now let $\psi: B(\cH_\pi) \to C(Z)$ be a $\Gamma$-map and let $g\notin \ker(\Gamma\act Z)$. So there exist $z\in Z$ and positive $f\in C(Z)$ such that $f(z) = 1$ and $f(g^{-1}z) = 0$. Hence, it follows from \cite[Lemma 2.2]{HartKal} that $\delta_z\circ\psi(\pi(g)) = 0$.
\end{proof}

\begin{thm}\label{thm:unq-ext-trc}
Let $\pi$ be a unitary representation of $\Gamma$ such that $\cB_\pi$ is commutative. Then any extendable trace on $C^*_\pi(\Gamma)$ is supported on $\Rad_\pi(\Gamma)$.
\end{thm}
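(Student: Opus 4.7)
The plan is to reduce the statement to a direct application of Lemma \ref{lem:unq-tr}. Let $\tau$ be an extendable trace on $C^*_\pi(\Gamma)$, so by definition there exists a $\Gamma$-map $\tilde\tau : B(\cH_\pi) \to \cB_\pi$ whose restriction to $C^*_\pi(\Gamma)$ recovers $\tau$ (viewed as a $\bC\cdot 1$-valued map through the natural inclusion $\bC\cdot 1\subseteq \cB_\pi$). The goal is to show $\tau(\pi(g)) = 0$ for every $g \notin \Rad_\pi(\Gamma)$.

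First I would observe that for any $g\in\Gamma$, since $\pi(g)\in C^*_\pi(\Gamma)$ and $\tilde\tau$ extends $\tau$, we have
\[
\tilde\tau(\pi(g)) \;=\; \tau(\pi(g))\cdot 1_{\cB_\pi}\;\in\;\bC\cdot 1_{\cB_\pi}\subseteq \cB_\pi .
\]
In particular, $\tilde\tau(\pi(g))$ is a scalar multiple of the unit of $\cB_\pi$, so every state on $\cB_\pi$ takes the same value $\tau(\pi(g))$ on $\tilde\tau(\pi(g))$.

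Next, since $\cB_\pi$ is assumed commutative and $g \notin \Rad_\pi(\Gamma)$, Lemma \ref{lem:unq-tr} applied to the $\Gamma$-map $\tilde\tau$ produces at least one state $\rho$ on $\cB_\pi$ with
\[
\rho(\tilde\tau(\pi(g))) \;=\; 0 .
\]
Combining this with the previous display gives $\tau(\pi(g)) = \rho(\tau(\pi(g))\cdot 1_{\cB_\pi}) = 0$, which is exactly the assertion that $\tau$ is supported on $\Rad_\pi(\Gamma)$.

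The argument is essentially a one-step deduction from Lemma \ref{lem:unq-tr}, so there is no substantial obstacle once the definition of extendable trace is unpacked: the only real ingredient beyond the lemma is the observation that an extension of a scalar-valued $\Gamma$-map $\tau : C^*_\pi(\Gamma) \to \bC$ necessarily takes scalar values on $C^*_\pi(\Gamma)$, which is immediate from the definition of ``extension.'' The commutativity hypothesis on $\cB_\pi$ enters only through its role in Lemma \ref{lem:unq-tr}; this is where the analogy with the regular-representation case (where $\cB_{\lambda_\Gamma}=C(\partial_F\Gamma)$ is automatically commutative) is preserved.
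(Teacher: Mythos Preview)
Your proof is correct and follows essentially the same approach as the paper's: both unpack the definition of extendable trace, observe that $\tilde\tau(\pi(g))$ is the scalar $\tau(\pi(g))$ times the unit (equivalently, a constant function on the Gelfand spectrum of $\cB_\pi$), and then apply Lemma~\ref{lem:unq-tr} to conclude this scalar must vanish whenever $g\notin\Rad_\pi(\Gamma)$.
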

 
\begin{proof}
Let $\tau$ be an extendable trace on $C^*_\pi(\Gamma)$ and let $\tilde\tau: B(\cH_\pi) \to \cB_\pi$ be a $\Gamma$-map that extends $\tau$. Then for any $g\in \Gamma$, $\tilde\tau(\pi(g))$ is a constant function on the Gelfand spectrum $Z$ of $\cB_\pi$, taking the value $\tau(\pi(g))$. If $g\notin \ker(\Gamma\act Z)$, then by Lemma \ref{lem:unq-tr}, $\tilde\tau(\pi(g))$ vanishes on some probability on $Z$. Hence, it follows that $\tau(\pi(g))=0$.
\end{proof}

\begin{cor}
Let $\pi$ be a unitary representation of $\Gamma$ whose FH-boundary is commutative and faithful. Then either $C^*_\pi(\Gamma)$ admits no extendable trace, or otherwise $\pi$ weakly contains $\lambda_\Gamma$ and the canonical trace is the unique extendable trace on $C^*_\pi(\Gamma)$.
\end{cor}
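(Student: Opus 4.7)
The plan is to combine Theorem \ref{thm:unq-ext-trc} with a standard GNS argument. If $C^*_\pi(\Gamma)$ admits no extendable trace, there is nothing to prove, so I would assume one exists and call it $\tau$. Since $\cB_\pi$ is commutative and $\Gamma\act\cB_\pi$ is faithful, we have $\Rad_\pi(\Gamma) = \{e\}$, and Theorem \ref{thm:unq-ext-trc} forces
\[
\tau(\pi(g)) = \delta_{g,e}, \qquad g\in\Gamma.
\]
Because $\operatorname{span}\{\pi(g):g\in\Gamma\}$ is dense in $C^*_\pi(\Gamma)$ and $\tau$ is norm continuous, this already yields uniqueness of the extendable trace.

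To establish $\lambda_\Gamma \prec \pi$, I would perform the GNS construction on the pair $(C^*_\pi(\Gamma), \tau)$, producing a representation $\sigma$ on a Hilbert space $\cH_\tau$ with cyclic vector $\xi$ satisfying $\langle \sigma(a)\xi, \xi\rangle = \tau(a)$. Using the forced values of $\tau$, a one-line computation gives
\[
\langle \sigma(\pi(g))\xi, \sigma(\pi(h))\xi\rangle = \tau(\pi(h^{-1}g)) = \delta_{g,h},
\]
so $\{\sigma(\pi(g))\xi\}_{g\in\Gamma}$ is orthonormal, and cyclicity of $\xi$ makes it total in $\cH_\tau$. The unitary $\cH_\tau\to\ell^2(\Gamma)$ sending $\sigma(\pi(g))\xi\mapsto\delta_g$ intertwines $\sigma\circ\pi$ with $\lambda_\Gamma$, so $\sigma$ descends to a $*$-homomorphism $C^*_\pi(\Gamma)\to C^*_\lambda(\Gamma)$ with $\pi(g)\mapsto \lambda_\Gamma(g)$. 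By definition this is exactly $\lambda_\Gamma\prec\pi$, and $\tau = \tau_\lambda\circ\sigma$ is then by construction the canonical trace pulled back through $\sigma$.

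I do not expect any serious obstacle: once Theorem \ref{thm:unq-ext-trc} pins down the trace on the generators, the rest is the classical identification of the GNS representation of the $\delta_e$-functional on $C^*(\Gamma)$ with the left regular representation. The only point to be slightly careful about is that the \emph{extendability} hypothesis on $\tau$ is consumed entirely in invoking Theorem \ref{thm:unq-ext-trc}; once the matrix-coefficient identity $\tau(\pi(g))=\delta_{g,e}$ is in hand, no further information about the specific $\Gamma$-extension to $B(\cH_\pi)$ is needed.
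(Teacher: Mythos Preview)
Your proof is correct. The paper states this corollary without proof, leaving it as an immediate consequence of Theorem~\ref{thm:unq-ext-trc}; your argument supplies exactly the details one would expect: faithfulness gives $\Rad_\pi(\Gamma)=\{e\}$, the theorem forces $\tau(\pi(g))=\delta_{g,e}$, and the standard GNS identification then yields both uniqueness and $\lambda_\Gamma\prec\pi$.
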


\begin{cor}\label{cor:unq-tr}
Suppose $\G$ has trivial amenable radical. Let $\pi$ be a unitary representation of $\Gamma$ such that $\cB_\pi=C(\partial_F\G)$. Then either $C^*_\pi(\Gamma)$ admits no trace, or otherwise $\pi$ weakly contains $\lambda_\Gamma$ and the canonical trace is the unique trace on $C^*_\pi(\Gamma)$.
\end{cor}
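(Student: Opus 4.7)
The plan is to deduce the corollary directly from Theorem \ref{thm:unq-ext-trc} by arguing that under the standing hypotheses every trace on $C^*_\pi(\G)$ is automatically extendable, after which the theorem pins down the only possible trace, and a GNS computation promotes its existence to the weak containment $\lambda_\G\prec\pi$. The two ingredients to prove are thus: (i) automatic extendability of any trace, and (ii) an upgrade from ``canonical trace exists on $C^*_\pi(\G)$'' to $\lambda_\G\prec\pi$.

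For (i), I would use that the hypothesis $\cB_\pi = C(\partial_F\G)$ identifies $\cB_\pi$ with the $\G$-injective envelope $\cI_\G(\bC)$ of the trivial $\G$-$C^*$-algebra. Any trace $\tau$ on $C^*_\pi(\G)$ is invariant under the inner action $\Ad_{\pi(g)}$, hence a $\G$-map $C^*_\pi(\G)\to\bC$ (with trivial $\G$-action on $\bC$). Composing with the canonical $\G$-embedding $\bC\hookrightarrow\cI_\G(\bC) = \cB_\pi$ and extending along $C^*_\pi(\G)\subseteq B(\cH_\pi)$ by $\G$-injectivity of $\cB_\pi$ yields a $\G$-map $\tilde\tau : B(\cH_\pi)\to\cB_\pi$ extending $\tau$. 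Since $\cB_\pi$ is commutative, Theorem \ref{thm:unq-ext-trc} then forces $\tau(\pi(g)) = 0$ for every $g\notin\Rad_\pi(\G)$. Furman's theorem gives $\Rad(\G) = \ker(\G\act\partial_F\G)$, so the hypothesis $\Rad(\G) = \{e\}$ combined with $\cB_\pi = C(\partial_F\G)$ yields $\Rad_\pi(\G) = \ker(\G\act\cB_\pi) = \{e\}$. Thus $\tau(\pi(g)) = \delta_{g,e}$, which already proves uniqueness (and identifies the trace with the canonical one on the generators).

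For (ii), assume such a $\tau$ exists and form its GNS triple $(\pi_\tau, \cH_\tau, \xi_\tau)$ on $C^*_\pi(\G)$. Setting $\sigma := \pi_\tau\circ\pi$, the orthogonality computation $\langle\sigma(g)\xi_\tau, \sigma(h)\xi_\tau\rangle = \tau(\pi(g^{-1}h)) = \delta_{g,h}$ shows that $\delta_g\mapsto\sigma(g)\xi_\tau$ extends to an isometric $\G$-equivariant map $\ell^2(\G)\to\cH_\tau$. Cyclicity of $\xi_\tau$ for $\pi_\tau(C^*_\pi(\G))$, which coincides with the $C^*$-algebra generated by $\sigma(\G)$, makes this map surjective, so $\sigma\cong\lambda_\G$. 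Since $\sigma$ factors through the surjection $\pi_\tau : C^*_\pi(\G)\to C^*_\sigma(\G) = C^*_\lambda(\G)$ with $\pi(g)\mapsto\lambda_\G(g)$, this is exactly $\lambda_\G\prec\pi$. No step is a serious obstacle once Theorem \ref{thm:unq-ext-trc} is granted; the only genuinely new ingredient beyond the preceding corollary is the automatic extendability of every trace, which is immediate from the identification $\cB_\pi = \cI_\G(\bC)$.
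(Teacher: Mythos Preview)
Your proposal is correct and follows essentially the same route as the paper: extendability via $\cB_\pi=\cI_\Gamma(\bC)$, then Theorem~\ref{thm:unq-ext-trc} together with $\Rad_\pi(\Gamma)=\ker(\Gamma\act\partial_F\Gamma)=\{e\}$ to force $\tau(\pi(g))=\delta_{g,e}$. The only difference is that the paper absorbs the passage from ``the unique trace is canonical'' to ``$\lambda_\Gamma\prec\pi$'' into the preceding (unproved) corollary, whereas you spell out the GNS argument explicitly; your version is therefore more self-contained but not a different approach.
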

\begin{proof}
Since $\cB_\pi=C(\partial_F\G)$, by comments above Lemma \ref{lem:unq-tr}, every trace on $C^*_\pi(\Gamma)$ is extendable. Since $\G$ has trivial amenable radical, the action $\G\act\partial_F\G$ is faithful. Hence the claim follows from Theorem \ref{thm:unq-ext-trc}.
\end{proof}

\begin{example}
Suppose $\Gamma$ has trivial amenable radical and that the centralizer $C_\G(g)$ of every non-trivial element $g\in \G$ is amenable (e.g. the free group $\bF_n$, $n\geq2$, satisfies this). Let $\rho_\G:\G\to B(\ell^2(\G))$ be the right regular representation. Let $\pi:\G\to B(\ell^2(\G\setminus\{e\}))$ be the unitary representation $\pi(g):=\lambda_\G(g)\rho_\G(g)$. This is in fact the Koopman representation associated to the action $\G\act S=\G\setminus\{e\}$ by conjugations, which is also unitarily equivalent to the direct sum of quasi-regular representations associated to stabilizer subgroups. Note that for $g\in S$, the stabilizer of $g$ in $\G$ is the centralizer $C_\G(g)$ of $g$ in $\G$, hence amenable by the assumption. Therefore the quasi-regular representation associated to stabilizer subgroups are weakly contained in the regular representation $\lambda_\G$. Hence, $\pi$ is weakly contained in $\lambda_\G$. Moreover, as seen in Example \ref{ex:koop-dis}, we have $\cB_\pi = C(\partial_F\G)$. Thus, by Corollary \ref{cor:unq-tr}, $C^*_\pi(\Gamma)$ either admits no trace, or otherwise $\pi$ is weakly equivalent to $\lambda_\Gamma$ and the canonical trace is the unique trace on $C^*_\pi(\Gamma)$. In particular, if $\G$ is $C^*$-simple (e.g. $\bF_n$, $n\geq2$), then the latter case holds.
\end{example}

\section{Quasi-regular representations}

In this section we study the case of quasi-regular representations. 
We see in this case, the FH-boundary is commutative, hence 
yields a compact $\Gamma$-space 
which should be considered as the ``\emph{Furstenberg boundary of the quotient}''.

Throughout this section $\Lambda\leq \Gamma$ is a subgroup, 
and $\Gamma\act \Gamma/\Lambda$ is the 
canonical action on the set of left $\Lambda$-cosets.

We denote by $\lambda_{\Gamma/\Lambda} : \Gamma \to B(\ell^2(\Gamma/\Lambda))$ 
the quasi-regular representation defined by 
$\lambda_{\Gamma/\Lambda}(g) \delta_{g'\Lambda} = \delta_{gg'\Lambda}$.

Observe that the map 
\[
B(\ell^2(\Gamma/\Lambda)) \ni T \mapsto f_T \in \ell^\infty(\Gamma/\Lambda) \ \ ; \ \
f_T(g\Lambda) := \langle T \delta_{g\Lambda} , \delta_{g\Lambda} \rangle
\]
is a $\Gamma$-projection.
Thus, $\cB_{\lambda_{\Gamma/\Lambda}} \subseteq  \ell^\infty(\Gamma/\Lambda)$, 
which implies $\cB_{\lambda_{\Gamma/\Lambda}}$ is commutative as it 
inherits its Choi-Effros product from $\ell^\infty(\Gamma/\Lambda)$. 

Hence we have $\cB_{\lambda_{\Gamma/\Lambda}} = C(\partial_{FH} (\Gamma/\Lambda))$, 
where $\partial_{FH} (\Gamma/\Lambda)$ is a compact $\Gamma$-space.

\begin{defn}
The compact $\Gamma$-space $\partial_{FH} (\Gamma/\Lambda)$ 
will be called the \emph{FH-boundary} of the pair $(\Gamma, \Lambda)$.
We say $\Gamma\act X$ is a (topological) 
$\Gamma/\Lambda$-boundary action, or that $X$ is a (topological) 
$\Gamma/\Lambda$-boundary, if there is a $\Gamma$-map 
$C(X) \to B(\ell^2(\Gamma/\Lambda))$ and every such map 
is isometric. 
\end{defn}

Obviously if $X$ is a $\Gamma/\Lambda$-boundary then 
$C(X)\subseteq C(\partial_{FH} (\Gamma/\Lambda))$ as a $\Gamma$-invariant subspace.

\begin{prop}
If $\Lambda$ is normal in $\Gamma$ then 
$\partial_{FH} (\Gamma/\Lambda) = \partial_F (\Gamma/\Lambda)$.
\end{prop}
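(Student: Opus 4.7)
The plan is to reduce to the example $\cB_{\lambda_{\Gamma/\Lambda}} = C(\partial_F(\Gamma/\Lambda))$ for the regular representation of the quotient group. The crucial observation is that when $\Lambda$ is normal, $\lambda_{\Gamma/\Lambda}$ is literally a pullback of the regular representation of $\Gamma/\Lambda$.

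First I would check that for every $\lambda \in \Lambda$ and every coset $g\Lambda$, normality of $\Lambda$ gives $\lambda g \Lambda = g(g^{-1}\lambda g)\Lambda = g\Lambda$, so $\lambda_{\Gamma/\Lambda}(\lambda) = I$. Thus $\lambda_{\Gamma/\Lambda}$ factors through the quotient map $q : \Gamma \to \Gamma/\Lambda$ as $\lambda_{\Gamma/\Lambda} = \tilde\lambda \circ q$, where $\tilde\lambda : \Gamma/\Lambda \to \cU(\ell^2(\Gamma/\Lambda))$ is the left regular representation of $\Gamma/\Lambda$.

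Consequently, the $\Gamma$-action on $B(\ell^2(\Gamma/\Lambda))$ by $\Ad_{\lambda_{\Gamma/\Lambda}(g)}$ factors through $\Gamma/\Lambda$ and coincides with the $\Gamma/\Lambda$-action $\Ad_{\tilde\lambda(\bar g)}$. In particular, a subspace of $B(\ell^2(\Gamma/\Lambda))$ is $\Gamma$-invariant if and only if it is $\Gamma/\Lambda$-invariant, and a u.c.p.\ map is $\Gamma$-equivariant if and only if it is $\Gamma/\Lambda$-equivariant. Hence the set $\cG$ of $\Gamma$-maps $B(\ell^2(\Gamma/\Lambda)) \to B(\ell^2(\Gamma/\Lambda))$ used in the construction of $\cB_{\lambda_{\Gamma/\Lambda}}$ coincides with the corresponding set for the representation $\tilde\lambda$ of $\Gamma/\Lambda$, and minimal elements agree on the nose.

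Therefore the FH-boundary $\cB_{\lambda_{\Gamma/\Lambda}}$ computed for $\Gamma$ is the same $C^*$-algebra, with the same $\Gamma/\Lambda$-action, as $\cB_{\tilde\lambda}$ computed for the group $\Gamma/\Lambda$. Since $\tilde\lambda$ is the left regular representation of $\Gamma/\Lambda$, the earlier example gives $\cB_{\tilde\lambda} = C(\partial_F(\Gamma/\Lambda))$. Unfolding definitions, $\partial_{FH}(\Gamma/\Lambda) = \partial_F(\Gamma/\Lambda)$ as compact $\Gamma$-spaces (with the $\Gamma$-action on the right-hand side factoring through $q$). No real obstacle beyond the bookkeeping in step two, which is the only place one might trip up in confusing the two levels at which $\Gamma$ acts.
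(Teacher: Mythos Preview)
Your proof is correct and is precisely the unpacking of what the paper means by ``follows immediately from construction'': when $\Lambda$ is normal, $\lambda_{\Gamma/\Lambda}$ is the pullback of the regular representation of $\Gamma/\Lambda$, so the two FH-boundary constructions coincide and the earlier example $\cB_{\lambda_{\Gamma/\Lambda}} = C(\partial_F(\Gamma/\Lambda))$ applies. The paper simply asserts this without writing out the factoring-through-the-quotient argument you supply.
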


\begin{proof}
This follows immediately from construction.
\end{proof}

\begin{lem}\label{co-amen-subgroups}
Let $\Lambda_1\leq \Lambda_2\leq \Gamma$ be subgroups.
If $\Lambda_1$ is co-amenable in $\Lambda_2$ then 
$\partial_{FH} (\Gamma/\Lambda_1) \cong \partial_{FH} (\Gamma/\Lambda_2)$ 
as $\Gamma$-spaces. 
\end{lem}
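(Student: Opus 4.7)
The plan is to produce $\Gamma$-maps $\cB_{\lambda_{\Gamma/\Lambda_1}} \to \cB_{\lambda_{\Gamma/\Lambda_2}}$ and $\cB_{\lambda_{\Gamma/\Lambda_2}} \to \cB_{\lambda_{\Gamma/\Lambda_1}}$ and then invoke $\Gamma$-rigidity to conclude that their compositions are identities, so that the two FH-boundaries are $\Gamma$-isomorphic as commutative $\Gamma$-$C^*$-algebras, hence their Gelfand spectra are isomorphic as compact $\Gamma$-spaces.

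First, I would produce a $\Gamma$-map $\cB_{\lambda_{\Gamma/\Lambda_1}} \to \cB_{\lambda_{\Gamma/\Lambda_2}}$ from a weak containment. Co-amenability of $\Lambda_1$ in $\Lambda_2$ means $1_{\Lambda_2} \prec \lambda_{\Lambda_2/\Lambda_1}$ as representations of $\Lambda_2$. Since induction preserves weak containment and composes (induction in stages),
\[
\lambda_{\Gamma/\Lambda_2} \;=\; \mathrm{ind}_{\Lambda_2}^\Gamma 1_{\Lambda_2} \;\prec\; \mathrm{ind}_{\Lambda_2}^\Gamma \lambda_{\Lambda_2/\Lambda_1} \;=\; \mathrm{ind}_{\Lambda_2}^\Gamma\,\mathrm{ind}_{\Lambda_1}^{\Lambda_2} 1_{\Lambda_1} \;=\; \mathrm{ind}_{\Lambda_1}^\Gamma 1_{\Lambda_1} \;=\; \lambda_{\Gamma/\Lambda_1}.
\]
Proposition \ref{prop:weak-contmnt-->map-bnds} then supplies the desired $\Gamma$-map $\cB_{\lambda_{\Gamma/\Lambda_1}} \to \cB_{\lambda_{\Gamma/\Lambda_2}}$.

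Second, for the reverse direction I would exploit the natural quotient $p\colon \Gamma/\Lambda_1 \twoheadrightarrow \Gamma/\Lambda_2$ of $\Gamma$-sets. Pullback along $p$ gives a $\Gamma$-equivariant unital $*$-embedding $p^*\colon \ell^\infty(\Gamma/\Lambda_2) \hookrightarrow \ell^\infty(\Gamma/\Lambda_1)$. Since $\cB_{\lambda_{\Gamma/\Lambda_2}} \subseteq \ell^\infty(\Gamma/\Lambda_2)$ by the observation preceding the lemma, composing $p^*\big|_{\cB_{\lambda_{\Gamma/\Lambda_2}}}$ with the inclusion $\ell^\infty(\Gamma/\Lambda_1) \hookrightarrow B(\ell^2(\Gamma/\Lambda_1))$ and with any $\Gamma$-projection $B(\ell^2(\Gamma/\Lambda_1)) \to \cB_{\lambda_{\Gamma/\Lambda_1}}$ yields a $\Gamma$-map $\cB_{\lambda_{\Gamma/\Lambda_2}} \to \cB_{\lambda_{\Gamma/\Lambda_1}}$.

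Finally, the two compositions are $\Gamma$-maps on $\cB_{\lambda_{\Gamma/\Lambda_1}}$ and $\cB_{\lambda_{\Gamma/\Lambda_2}}$ respectively, so by $\pi$-rigidity (Proposition \ref{min-image-properties}(4), applied with $\pi = \lambda_{\Gamma/\Lambda_1}$ and $\pi = \lambda_{\Gamma/\Lambda_2}$) each composition is the identity. Hence the two maps are mutually inverse $\Gamma$-isomorphisms of commutative $\Gamma$-$C^*$-algebras, which under Gelfand duality yields the claimed $\Gamma$-homeomorphism $\partial_{FH}(\Gamma/\Lambda_1) \cong \partial_{FH}(\Gamma/\Lambda_2)$. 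The main point requiring care is the weak-containment step: one must verify that induction in stages and the standard fact that $\mathrm{ind}_\Lambda^\Gamma$ is continuous with respect to weak containment (see, e.g., \cite[Appendix F]{BHV08}) apply to the subgroup chain $\Lambda_1 \leq \Lambda_2 \leq \Gamma$ in the generality needed here; once this is in hand, the remainder is a direct application of the rigidity machinery already developed.
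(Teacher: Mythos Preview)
Your proposal is correct and follows essentially the same approach as the paper: both arguments derive $\lambda_{\Gamma/\Lambda_2}\prec\lambda_{\Gamma/\Lambda_1}$ from co-amenability via induction in stages to obtain one $\Gamma$-map, use the canonical inclusion $\ell^\infty(\Gamma/\Lambda_2)\hookrightarrow\ell^\infty(\Gamma/\Lambda_1)$ (composed with a $\Gamma$-projection onto the boundary) for the other, and then invoke $\pi$-rigidity. The only cosmetic difference is that you explicitly apply rigidity to both compositions, whereas the paper writes out only the composition on $C(\partial_{FH}(\Gamma/\Lambda_1))$ and leaves the other implicit.
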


\begin{proof}
Suppose $\Lambda_1$ is co-amenable in $\Lambda_2$, i.e., 
$1_{\Lambda_2} \prec \lambda_{\Lambda_2/\Lambda_1}$. Then 
\[\begin{split}
\lambda_{\Gamma/\Lambda_2} 
&= 
\operatorname{Ind}_{\Lambda_2}^\Gamma(1_{\Lambda_2}) 
\prec 
\operatorname{Ind}_{\Lambda_2}^\Gamma(\lambda_{\Lambda_2/\Lambda_1})
\\&=
\operatorname{Ind}_{\Lambda_2}^\Gamma(\operatorname{Ind}_{\Lambda_1}^{\Lambda_2 }(1_{\Lambda_1})
=
\operatorname{Ind}_{\Lambda_1}^\Gamma(1_{\Lambda_1}) 
=
\lambda_{\Gamma/\Lambda_1}.
\end{split}\]
Thus, by Proposition \ref{prop:weak-contmnt-->map-bnds}, there is a $\Gamma$-map 
$\psi: C(\partial_{FH} (\Gamma/\Lambda_1)) \to C(\partial_{FH} (\Gamma/\Lambda_2))$. 
Note that $\ell^\infty(\Gamma/\Lambda_2)$ is canonically identified with a $\Gamma$-invariant 
$C^*$-subalgebra of $\ell^\infty(\Gamma/\Lambda_1)$.
Hence we get the string of $\Gamma$-maps
\[\begin{split}
C(\partial_{FH} (\Gamma/\Lambda_1)) 
&\xrightarrow{\psi} 
C(\partial_{FH} (\Gamma/\Lambda_2))
\xrightarrow{\id} 
\ell^\infty(\Gamma/\Lambda_2)
\\&\xrightarrow{\id} 
\ell^\infty(\Gamma/\Lambda_1)
\xrightarrow{\varphi_1} 
C(\partial_{FH} (\Gamma/\Lambda_1)), 
\end{split}\]
where $\varphi_1$ is an idempotent $\Gamma$-map.
By Proposition \ref{min-image-properties} the composition of the above maps 
is the identity map on $C(\partial_{FH} (\Gamma/\Lambda_1))$. 
Hence it follows that $\psi$ induces a $\Gamma$-equivariant 
homeomorphism between $\partial_{FH} (\Gamma/\Lambda_1)$ 
and $\partial_{FH} (\Gamma/\Lambda_2)$.
%
\end{proof}

\begin{cor}
The FH-boundary of the the pair $(\Gamma, \Lambda)$ 
is trivial if and only if 
 $\Lambda\leq \Gamma$ is co-amenable.
\end{cor}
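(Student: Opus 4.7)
The plan is to prove the two implications separately, with the forward direction following quickly from the preceding lemma and the backward direction from the characterization of boundary triviality in terms of amenability of the representation.

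For the forward direction, I would apply Lemma \ref{co-amen-subgroups} with $\Lambda_1 = \Lambda$ and $\Lambda_2 = \Gamma$. The hypothesis that $\Lambda$ is co-amenable in $\Gamma$ is precisely the hypothesis of the lemma for this choice, so it yields a $\Gamma$-equivariant homeomorphism $\partial_{FH}(\Gamma/\Lambda) \cong \partial_{FH}(\Gamma/\Gamma)$. The right-hand side is trivial by inspection: $\Gamma/\Gamma$ is a single point, $\lambda_{\Gamma/\Gamma} = 1_\Gamma$, and $B(\ell^2(\Gamma/\Gamma)) = \bC$ already admits a $\Gamma$-invariant state, so $\cB_{1_\Gamma} = \bC$ and $\partial_{FH}(\Gamma/\Gamma)$ is a point.

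For the converse, suppose $\partial_{FH}(\Gamma/\Lambda)$ is trivial, i.e., $\cB_{\lambda_{\Gamma/\Lambda}} = \bC$. By Proposition \ref{triv-bnd<-->amen}, the representation $\lambda_{\Gamma/\Lambda}$ is amenable, so there exists a $\Gamma$-invariant state $\phi : B(\ell^2(\Gamma/\Lambda)) \to \bC$. The canonical diagonal inclusion $\ell^\infty(\Gamma/\Lambda) \hookrightarrow B(\ell^2(\Gamma/\Lambda))$ is $\Gamma$-equivariant (since $\Gamma$ acts on both sides by the action coming from $\lambda_{\Gamma/\Lambda}$ on the cosets), so the restriction $\phi|_{\ell^\infty(\Gamma/\Lambda)}$ is a $\Gamma$-invariant state on $\ell^\infty(\Gamma/\Lambda)$. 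By the equivalent characterization of co-amenability recalled in Section 2, this exactly says that $\Lambda$ is co-amenable in $\Gamma$.

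There is no real obstacle here; both directions are essentially bookkeeping once Lemma \ref{co-amen-subgroups} and Proposition \ref{triv-bnd<-->amen} are in hand. The only mild point worth checking is the $\Gamma$-equivariance of the diagonal inclusion $\ell^\infty(\Gamma/\Lambda) \subseteq B(\ell^2(\Gamma/\Lambda))$, which is immediate from the formula $\lambda_{\Gamma/\Lambda}(g)\delta_{h\Lambda} = \delta_{gh\Lambda}$.
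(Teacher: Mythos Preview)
Your proof is correct and matches the paper's approach. The paper's one-line proof invokes Lemma~\ref{co-amen-subgroups} with $\Lambda_1=\Lambda$, $\Lambda_2=\Gamma$, which literally only supplies the implication ``co-amenable $\Rightarrow$ trivial boundary''; you reproduce this and additionally spell out the converse via Proposition~\ref{triv-bnd<-->amen} (triviality of $\cB_{\lambda_{\Gamma/\Lambda}}$ gives a $\Gamma$-invariant state on $B(\ell^2(\Gamma/\Lambda))$, hence on $\ell^\infty(\Gamma/\Lambda)$), a direction the paper leaves implicit. One cosmetic point: for the statement ``trivial iff co-amenable'' the forward direction is ``trivial $\Rightarrow$ co-amenable'', so your labels are swapped, but the mathematics is fine.
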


\begin{proof}
Setting $\Lambda_1 = \Lambda$ and $\Lambda_2 = \Gamma$ in 
Lemma \ref{co-amen-subgroups} yields the result.
\end{proof}

\begin{cor}
We have $\partial_{FH} (\Gamma/\Lambda) = \partial_F (\Gamma)$ 
as $\Gamma$-spaces if and only if $\Lambda$ is amenable.
\end{cor}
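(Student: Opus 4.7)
The plan is to prove the two implications separately, leveraging the lemma just proved and the general description of $\cB_\pi$ together with the point-stabilizer fact already used in Example \ref{ex:koop-dis}.

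For the forward direction, if $\Lambda$ is amenable, I would apply Lemma \ref{co-amen-subgroups} with $\Lambda_1 = \{e\}$ and $\Lambda_2 = \Lambda$. The hypothesis of the lemma, namely co-amenability of $\{e\}$ inside $\Lambda$, is precisely the statement $1_\Lambda \prec \lambda_\Lambda$, i.e.\ amenability of $\Lambda$. The conclusion reads $\partial_{FH}(\Gamma/\{e\}) \cong \partial_{FH}(\Gamma/\Lambda)$ as $\Gamma$-spaces, and the left side is $\partial_F\Gamma$ since $\lambda_{\Gamma/\{e\}} = \lambda_\Gamma$. (Equivalently, one can cite the first half of Theorem \ref{thm:exact--w-cntmnt-reg<-->bnd-Furs}, since $\lambda_{\Gamma/\Lambda} \prec \lambda_\Gamma$ iff $\Lambda$ is amenable.)

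For the reverse direction, suppose $\partial_{FH}(\Gamma/\Lambda) \cong \partial_F\Gamma$ as $\Gamma$-spaces. Because $\cB_{\lambda_{\Gamma/\Lambda}} \subseteq \ell^\infty(\Gamma/\Lambda)$ as a $\Gamma$-invariant $C^*$-subalgebra (inheriting its Choi--Effros product from $\ell^\infty(\Gamma/\Lambda)$, as noted at the start of this section), the assumption gives a $\Gamma$-equivariant unital $*$-embedding
\[
\iota: C(\partial_F\Gamma) \hookrightarrow \ell^\infty(\Gamma/\Lambda) = C(\beta(\Gamma/\Lambda)).
\]
Gelfand duality converts $\iota$ into a continuous $\Gamma$-equivariant surjection $\phi: \beta(\Gamma/\Lambda) \twoheadrightarrow \partial_F\Gamma$. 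Consider the basepoint $e\Lambda \in \Gamma/\Lambda \subseteq \beta(\Gamma/\Lambda)$, whose stabilizer in $\Gamma$ is exactly $\Lambda$; by equivariance, $\Lambda$ fixes the point $x := \phi(e\Lambda) \in \partial_F\Gamma$, so $\Lambda \subseteq \stab_\Gamma(x)$.

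It remains to show $\stab_\Gamma(x)$ is amenable; this is the only non-formal input, and is the step I would call the heart of the argument, but it was already established in Example \ref{ex:koop-dis}. Indeed, let $E:\ell^\infty(\Gamma) \to C(\partial_F\Gamma)$ be any $\Gamma$-projection (existing because $C(\partial_F\Gamma)$ embeds in $\ell^\infty(\Gamma)$ as the image of a minimal $\Gamma$-idempotent). Then $\delta_x \circ E$ is a $\stab_\Gamma(x)$-invariant mean on $\ell^\infty(\Gamma)$, and restricting to functions supported on a single left $\stab_\Gamma(x)$-orbit yields a left-invariant mean on $\ell^\infty(\stab_\Gamma(x))$, showing $\stab_\Gamma(x)$ is amenable. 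Hence $\Lambda$ is amenable, which completes the proof.
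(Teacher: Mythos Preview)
Your forward direction coincides with the paper's proof: apply Lemma~\ref{co-amen-subgroups} with $\Lambda_1=\{e\}$ and $\Lambda_2=\Lambda$. Note that the paper's one-line proof cites only this lemma, which is stated as a one-way implication, so your converse argument in fact supplies something the paper leaves implicit.

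There is, however, a genuine gap in your converse. You assert that the inclusion $\cB_{\lambda_{\Gamma/\Lambda}}\subseteq \ell^\infty(\Gamma/\Lambda)$ is a unital \emph{$*$-embedding}, and then invoke Gelfand duality to produce a continuous $\Gamma$-map $\beta(\Gamma/\Lambda)\to\partial_F\Gamma$. But $\cB_{\lambda_{\Gamma/\Lambda}}$ is only the image of a $\Gamma$-idempotent on $\ell^\infty(\Gamma/\Lambda)$; its $C^*$-structure is the Choi--Effros product $a\cdot b=\phi(ab)$, and there is no reason for $\phi(ab)=ab$ to hold. So the inclusion is a priori only a completely isometric $\Gamma$-map of operator systems, and Gelfand duality yields a $\Gamma$-map $\beta(\Gamma/\Lambda)\to\pr(\partial_F\Gamma)$, not a map to $\partial_F\Gamma$ itself. (This is exactly how Example~\ref{ex:koop-dis} is phrased: the inclusion $C(\partial_F\Gamma)\subseteq\ell^\infty(S)$ yields a $\Gamma$-map $S\to\pr(\partial_F\Gamma)$.)

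The fix is immediate and keeps your strategy intact: evaluate the u.c.p.\ $\Gamma$-map $C(\partial_F\Gamma)\to\ell^\infty(\Gamma/\Lambda)$ at the coset $e\Lambda$ to obtain a $\Lambda$-invariant \emph{probability} $\nu\in\pr(\partial_F\Gamma)$, so that $\Lambda\subseteq\stab_\Gamma(\nu)$. The amenability step you quote from Example~\ref{ex:koop-dis} is stated for an arbitrary $\nu\in\pr(\partial_F\Gamma)$, not just a point mass: for any $\Gamma$-map $E:\ell^\infty(\Gamma)\to C(\partial_F\Gamma)$ the state $\nu\circ E$ is a $\stab_\Gamma(\nu)$-invariant mean on $\ell^\infty(\Gamma)$, whence $\stab_\Gamma(\nu)$ and therefore $\Lambda$ is amenable. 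With this adjustment your argument is correct.
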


\begin{proof}
Setting $\Lambda_1 = \{e\}$ and $\Lambda_2 = \Lambda$ in 
Lemma \ref{co-amen-subgroups} yields the result.
\end{proof}

\subsection{Dynamical characterization of $\partial_{FH} (\Gamma/\Lambda)$}

In this section we prove a dynamical characterization of 
$\Gamma/\Lambda$-boundaries 
in terms of contractibility of certain measures.

\begin{defn}
A probability measure $\nu$ on a compact $\Gamma$-space $X$ 
is \emph{contractible} if $\delta_x\in \overline{\Gamma\nu}^{\text{weak*}}$ for every $x\in X$.
We denote by $\pr^c(X)$ the set of all contractible probabilities on $X$.
\end{defn}

We also denote by $\pr_\Lambda(X)$ the set of all $\Lambda$-invariant probabilities on $X$.

\begin{thm}[Azencott]
A probability measure $\nu$ on a compact $\Gamma$-space $X$ 
is \emph{contractible} iff its corresponding Poisson map 
$\cP_\nu$ is isometric.
\end{thm}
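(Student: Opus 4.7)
The plan is to establish the two implications separately. The forward direction is a direct consequence of weak-$*$ continuity of integration against $f$; the reverse direction is the substantive one and proceeds via an approximation argument using a Urysohn-type bump function to concentrate translates of $\nu$ near a prescribed point.

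For $(\Rightarrow)$, suppose $\nu$ is contractible and fix $f \in C(X)$. Choose $x_0 \in X$ with $|f(x_0)| = \|f\|_\infty$, and a net $(g_\alpha)$ in $\Gamma$ with $g_\alpha \nu \to \delta_{x_0}$ in the weak-$*$ topology. Since $\cP_\nu(f)(g) = (g\nu)(f)$ and evaluation against $f$ is weak-$*$ continuous, $\cP_\nu(f)(g_\alpha) \to f(x_0)$, whence $\|\cP_\nu(f)\|_\infty \geq |f(x_0)| = \|f\|_\infty$. The reverse inequality is automatic because $\cP_\nu$ is unital and positive, so $\cP_\nu$ is isometric.

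For $(\Leftarrow)$, suppose $\cP_\nu$ is isometric and fix $x_0 \in X$. To prove $\delta_{x_0}$ lies in the weak-$*$ closure of $\Gamma\nu$, consider an arbitrary basic neighborhood $U = \{\mu \in \pr(X) : |\mu(f_i) - f_i(x_0)| < \epsilon, \ i = 1, \dots, n\}$ of $\delta_{x_0}$. Choose an open $V \ni x_0$ on which $|f_i(y) - f_i(x_0)| < \epsilon/2$ for every $i$, and apply Urysohn's lemma to produce $h \in C(X)$ with $0 \leq h \leq 1$, $h(x_0) = 1$, and $\supp(h) \subseteq V$. Since $h \geq 0$ and $\|h\|_\infty = 1$, isometry of $\cP_\nu$ gives $\sup_{g \in \Gamma}(g\nu)(h) = 1$. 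Pick $\delta > 0$ satisfying $\epsilon/2 + 2\delta \max_i \|f_i\|_\infty < \epsilon$, and choose $g \in \Gamma$ with $(g\nu)(h) > 1 - \delta$; then $(g\nu)(V^c) \leq 1 - (g\nu)(h) < \delta$, and splitting $|(g\nu)(f_i) - f_i(x_0)|$ over $V$ and $V^c$ yields $g\nu \in U$.

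The main obstacle is the reverse direction: isometry of $\cP_\nu$ is a scalar statement about a supremum of integrals indexed by $\Gamma$, whereas contractibility is a geometric concentration of translates of $\nu$ near an arbitrary point of $X$. The Urysohn bump $h$ supported in $V$ is exactly the translation device that converts the former into the latter, since any $g$ for which $(g\nu)(h)$ nearly saturates its supremum is forced to place essentially all mass of $g\nu$ inside the prescribed neighborhood $V$ of $x_0$.
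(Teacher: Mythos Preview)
The paper does not supply a proof of this theorem; it is stated with attribution to Azencott and used as a tool in the subsequent dynamical characterization. Your argument is correct and is essentially the standard proof of this fact: the forward direction is immediate from weak-$*$ continuity, and for the converse the Urysohn bump $h$ with $\|h\|_\infty = 1 = h(x_0)$ is precisely what forces some translate $g\nu$ to concentrate in the prescribed neighborhood, since isometry gives $\sup_g (g\nu)(h) = 1$. The estimate splitting $|(g\nu)(f_i) - f_i(x_0)|$ over $V$ and $V^c$ is clean and correct. There is nothing to compare against in the paper itself.
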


\begin{thm}\label{dyn-charcterization}
Let $\Lambda\leq \Gamma$ be a subgroup. An action $\Gamma\act X$ is a $\Gamma/\Lambda$-boundary iff $\emptyset \neq \pr_\Lambda(X) \subseteq \pr^c(X)$.
\end{thm}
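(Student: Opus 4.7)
The plan is to reduce the question to the commutative setting $\ell^\infty(\Gamma/\Lambda)\subseteq B(\ell^2(\Gamma/\Lambda))$ and then invoke the Azencott theorem stated just above. The first step is to set up a bijection between $\Lambda$-invariant probability measures on $X$ and $\Gamma$-maps $C(X)\to \ell^\infty(\Gamma/\Lambda)$. Given $\nu\in\pr_\Lambda(X)$, its Poisson map $\cP_\nu: C(X)\to \ell^\infty(\Gamma)$, $\cP_\nu(f)(g)=\nu(g^{-1}f)$, is $\Lambda$-right-invariant because $\Lambda$ fixes $\nu$, so it descends to a $\Gamma$-map $\cP_\nu:C(X)\to\ell^\infty(\Gamma/\Lambda)$. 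Conversely, any $\Gamma$-map $\psi:C(X)\to\ell^\infty(\Gamma/\Lambda)$ is determined by the state $\nu_\psi(f):=\psi(f)(e\Lambda)$, which one checks is $\Lambda$-invariant and satisfies $\psi=\cP_{\nu_\psi}$.

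The second step is to bring $B(\ell^2(\Gamma/\Lambda))$ into the picture using the diagonal $\Gamma$-projection $E:B(\ell^2(\Gamma/\Lambda))\to \ell^\infty(\Gamma/\Lambda)$, $E(T)(g\Lambda)=\langle T\delta_{g\Lambda},\delta_{g\Lambda}\rangle$. For any $\Gamma$-map $\phi:C(X)\to B(\ell^2(\Gamma/\Lambda))$, the composition $E\circ\phi$ is a $\Gamma$-map into $\ell^\infty(\Gamma/\Lambda)$, hence equals $\cP_{\nu_\phi}$ for the $\Lambda$-invariant probability $\nu_\phi(f):=\langle\phi(f)\delta_{e\Lambda},\delta_{e\Lambda}\rangle$ (the $\Lambda$-invariance being a direct consequence of $\Gamma$-equivariance of $\phi$). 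Conversely, composing $\cP_\nu$ with the inclusion $\ell^\infty(\Gamma/\Lambda)\hookrightarrow B(\ell^2(\Gamma/\Lambda))$ produces a $\Gamma$-map into $B(\ell^2(\Gamma/\Lambda))$. In particular, a $\Gamma$-map $C(X)\to B(\ell^2(\Gamma/\Lambda))$ exists if and only if $\pr_\Lambda(X)\neq\emptyset$.

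The third step handles isometry by sandwiching norms. For any u.c.p.\ $\phi$ as above and $f\in C(X)$ one has
\[
\|\cP_{\nu_\phi}(f)\|=\|E(\phi(f))\|\leq\|\phi(f)\|\leq\|f\|.
\]
Thus, if $\cP_{\nu_\phi}$ is isometric, so is $\phi$. Combined with Azencott's theorem, this gives the backward direction: if $\emptyset\neq\pr_\Lambda(X)\subseteq\pr^c(X)$, then there exists a $\Gamma$-map $C(X)\to B(\ell^2(\Gamma/\Lambda))$, and for every such map $\phi$ the associated $\nu_\phi$ is contractible, so $\cP_{\nu_\phi}$ is isometric, and hence so is $\phi$. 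For the forward direction, assume $X$ is a $\Gamma/\Lambda$-boundary. Then $\pr_\Lambda(X)$ is nonempty because some $\Gamma$-map to $B(\ell^2(\Gamma/\Lambda))$ exists, producing a $\Lambda$-invariant probability as above. Given any $\nu\in\pr_\Lambda(X)$, the induced $\Gamma$-map $C(X)\to\ell^\infty(\Gamma/\Lambda)\subseteq B(\ell^2(\Gamma/\Lambda))$ must be isometric by assumption, so $\cP_\nu$ is isometric, and Azencott's theorem yields $\nu\in\pr^c(X)$.

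The main (mild) obstacle is the verification that the diagonal-composition procedure correctly attaches a $\Lambda$-invariant probability to every $\Gamma$-map into $B(\ell^2(\Gamma/\Lambda))$ and that this operation is compatible with the norm-sandwich estimate; once this bookkeeping is carried out, the remainder is a direct appeal to Azencott.
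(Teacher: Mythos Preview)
Your proof is correct and follows essentially the same approach as the paper: both identify $\Gamma$-maps $C(X)\to\ell^\infty(\Gamma/\Lambda)$ with Poisson maps of $\Lambda$-invariant probabilities and then invoke Azencott's theorem. Your write-up is in fact slightly more complete than the paper's, since you make explicit the norm-sandwich $\|\cP_{\nu_\phi}(f)\|\le\|\phi(f)\|\le\|f\|$ via the diagonal projection $E$, which is needed to pass from ``every $\Gamma$-map into $\ell^\infty(\Gamma/\Lambda)$ is isometric'' to ``every $\Gamma$-map into $B(\ell^2(\Gamma/\Lambda))$ is isometric'' in the backward direction; the paper leaves this step implicit.
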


\begin{proof}
Suppose $\Gamma\act X$ is a $\Gamma/\Lambda$-boundary. 
So there is a $\Gamma$-embedding $C(X) \hookrightarrow \ell^\infty(\Gamma/\Lambda)$. The restriction of the state $\delta_\Lambda\in \ell^1(\Gamma/\Lambda)$ yields a $\Lambda$-invariant probability on $X$. Moreover, if $\eta\in\pr_\Lambda(X)$, then the Poisson map 
$\cP_\eta$ is a $\Gamma$-map from $C(X)$ to $\ell^\infty(\Gamma/\Lambda)$, hence isometric by definition of $\Gamma/\Lambda$-boundaries, which implies $\eta\in\pr^c(X)$.

Conversely, suppose $\emptyset \neq \pr_\Lambda(X) \subseteq \pr^c(X)$. Let $\eta\in\pr_\Lambda(X)$. Then $\cP_\eta$ is a $\Gamma$-map from $C(X)$ to $\ell^\infty(\Gamma/\Lambda)$, and 
any such map is the Poisson map corresponding to a $\Lambda$-invariant measure on $X$. Since all such measures are contractible, it follows that any $\Gamma$-map from $C(X)$ to $\ell^\infty(\Gamma/\Lambda)$ is isometric. Hence $X$ is a $\Gamma/\Lambda$-boundary.
\end{proof}

\begin{cor}\label{inv-prob-->subgrp-bnd}
Let $\Lambda_1\leq\Lambda_2\leq\Gamma$ be subgroups, and let  
$X$ be a $\Gamma/\Lambda_1$-boundary. 
Then $X$ is a $\Gamma/\Lambda_2$-boundary 
if and only if it admits a $\Lambda_2$-invariant probability. 
\end{cor}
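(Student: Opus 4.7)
The plan is to deduce this directly from the dynamical characterization in Theorem~\ref{dyn-charcterization}, which says that $Y$ is a $\Gamma/\Lambda$-boundary iff $\emptyset \neq \pr_\Lambda(Y) \subseteq \pr^c(Y)$. So both directions will be reduced to statements about the sets $\pr_{\Lambda_1}(X)$ and $\pr_{\Lambda_2}(X)$.

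For the forward direction, if $X$ is a $\Gamma/\Lambda_2$-boundary, then Theorem~\ref{dyn-charcterization} gives $\pr_{\Lambda_2}(X) \neq \emptyset$, which is precisely the statement that $X$ admits a $\Lambda_2$-invariant probability.

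For the converse, the key (trivial) observation is that $\Lambda_1 \leq \Lambda_2$ forces every $\Lambda_2$-invariant probability to be $\Lambda_1$-invariant, i.e.\ $\pr_{\Lambda_2}(X) \subseteq \pr_{\Lambda_1}(X)$. By hypothesis $X$ is a $\Gamma/\Lambda_1$-boundary, so Theorem~\ref{dyn-charcterization} yields $\pr_{\Lambda_1}(X) \subseteq \pr^c(X)$; combining these inclusions gives $\pr_{\Lambda_2}(X) \subseteq \pr^c(X)$. Assuming in addition that $X$ admits a $\Lambda_2$-invariant probability ensures $\pr_{\Lambda_2}(X) \neq \emptyset$, and invoking Theorem~\ref{dyn-charcterization} once more concludes that $X$ is a $\Gamma/\Lambda_2$-boundary.

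There is no real obstacle here: the whole content is already packaged into Theorem~\ref{dyn-charcterization}, and the corollary is essentially a bookkeeping application of the monotonicity $\pr_{\Lambda_2}(X) \subseteq \pr_{\Lambda_1}(X)$ coming from $\Lambda_1 \leq \Lambda_2$.
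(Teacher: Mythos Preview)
Your proof is correct and matches the paper's approach exactly: both arguments reduce the statement to Theorem~\ref{dyn-charcterization} via the chain of inclusions $\pr_{\Lambda_2}(X) \subseteq \pr_{\Lambda_1}(X) \subseteq \pr^c(X)$, with the only difference being that the paper packages the two directions into a single sentence rather than treating them separately.
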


\begin{proof}
Since $X$ is a $\Gamma/\Lambda_1$-boundary, every $\Lambda_1$-invariant probability $\eta\in\pr(X)$ is contractible by Theorem \ref{dyn-charcterization}. Hence, $$\pr_{\Lambda_2}(X) \subseteq \pr_{\Lambda_1}(X) \subseteq \pr^c(X).$$ Thus, again Theorem \ref{dyn-charcterization} implies that $X$ is a $\Gamma/\Lambda$-boundary if and only if it admits a $\Lambda_2$-invariant probability.
\end{proof}

An important case is when $\Lambda_1$ is trivial in the above corollary, which gives the following.

\begin{cor}\label{inv-prob-->bnd}
Let $\Lambda$ be a subgroup of $\Gamma$. Then a boundary action $\Gamma\act X$ is also a $\Gamma/\Lambda$-boundary action if and only if $X$ admits a $\Lambda$-invariant probability. 
\end{cor}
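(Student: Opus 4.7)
The plan is to recognize this statement as the specialization $\Lambda_1 = \{e\}$ of the preceding Corollary \ref{inv-prob-->subgrp-bnd}. The only preparatory observation needed is a terminological reconciliation: a classical topological $\Gamma$-boundary action $\Gamma \act X$, namely one for which every $\Gamma$-map $C(X) \to \ell^\infty(\Gamma)$ is isometric, is precisely the same thing as a $\Gamma/\{e\}$-boundary in the sense introduced in this section. This is immediate since $\lambda_{\Gamma/\{e\}} = \lambda_\Gamma$ and $\ell^\infty(\Gamma/\{e\}) = \ell^\infty(\Gamma)$, and the FH-boundary $\cB_{\lambda_\Gamma}$ is by construction (and by the first example in Section 3) the usual $C(\partial_F\Gamma)$.

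With this identification in place, Corollary \ref{inv-prob-->subgrp-bnd} applied with $\Lambda_1 = \{e\}$ and $\Lambda_2 = \Lambda$ yields immediately that the boundary action $\Gamma\act X$ is a $\Gamma/\Lambda$-boundary if and only if $X$ admits a $\Lambda$-invariant probability. Alternatively, one can trace the argument directly through Theorem \ref{dyn-charcterization}: the forward direction is automatic, since being a $\Gamma/\Lambda$-boundary forces $\pr_\Lambda(X) \neq \emptyset$; for the converse, the hypothesis that $X$ is a boundary gives $\pr(X) \subseteq \pr^c(X)$, whence $\pr_\Lambda(X) \subseteq \pr^c(X)$ as soon as we can produce one $\Lambda$-invariant probability, and the dynamical characterization of $\Gamma/\Lambda$-boundaries completes the argument.

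There is no real obstacle here; the statement is, by design, the most important special case of the corollary just proved, and its proof amounts to pointing at that corollary together with the remark that the unadorned notion of ``boundary'' is the $\Lambda = \{e\}$ instance of the relative notion.
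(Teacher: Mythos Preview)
Your proposal is correct and matches the paper's approach exactly: the paper simply remarks that this is the special case $\Lambda_1 = \{e\}$ of Corollary~\ref{inv-prob-->subgrp-bnd}. Your additional terminological reconciliation and the alternative direct route via Theorem~\ref{dyn-charcterization} are accurate elaborations, but the paper itself gives no proof beyond the one-line specialization.
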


\begin{example}
Let $p>1$ be a prime, $n\geq2$, and let $G= SL_n(\bQ_p)$. With the $p$-adic topology, $G$ is a locally compact group, containing $\Gamma = SL_n(\bZ[\frac1p])$ as a dense subgroup. The closure of the subgroup $\Lambda= SL_n(\bZ)$ in $G$ is the compact subgroup $SL_n(\bZ_p)$. Since $\Gamma$ is dense in $G$ it follows that $\partial_F G$ is a $\Gamma$-boundary. Since $\Lambda$ is precompact in $G$, $\partial_F G$ admits a $\Lambda$-invariant probability. Hence, by Corollary \ref{inv-prob-->bnd}, $\partial_F G$ is a $\Gamma/\Lambda$-boundary.
\end{example}

The above example is indeed a special case of the following.

A subgroup $\Lambda\leq \Gamma$ is said be \emph{commensurated} in 
$\Gamma$ if $g\Lambda g^{-1} \cap \Lambda$ has finite index in $\Lambda$ 
for all $g\in \Gamma$.

Let $\operatorname{Symm}(\Gamma/\Lambda)$ denote the group of all 
symmetries of the set $\Gamma/\Lambda$. With the pointwise convergence 
topology, $\operatorname{Symm}(\Gamma/\Lambda)$ is a topological 
group, and the canonical action of $\Gamma$ on $\Gamma/\Lambda$ 
gives a group homomorphism 
$\Gamma\to \operatorname{Symm}(\Gamma/\Lambda)$. 
We denote by $\overline{\Gamma}$ 
the closure of image of $\Gamma$ under this 
homomorphism (it will always be clear from the context 
with respect to which subgroup $\Lambda$ this completion is taken). 
Then $\overline{\Lambda}$ is a compact open subgroup of 
$\overline{\Gamma}$, and 
$\overline{\Gamma}/\overline{\Lambda} = \Gamma/\Lambda$.

\begin{prop}\label{commensurated->bnd}
Suppose $\Lambda\leq\Gamma$ is commensurated. Then $\partial_F\overline{\Gamma}$ 
(the Furstenberg boundary of the topological group $\overline{\Gamma}$) 
is a $\Gamma/\Lambda$-boundary. 
\end{prop}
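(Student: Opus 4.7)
The plan is to verify the hypotheses of the dynamical criterion in Theorem~\ref{dyn-charcterization} for the compact $\Gamma$-space $\partial_F\overline{\Gamma}$, where $\Gamma$ acts through the canonical homomorphism $\Gamma\to\overline{\Gamma}$. Concretely, I must show both that $\pr_\Lambda(\partial_F\overline{\Gamma})\neq\emptyset$ and that every element of $\pr_\Lambda(\partial_F\overline{\Gamma})$ is $\Gamma$-contractible.

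For non-emptiness, I would exploit the compactness of $\overline{\Lambda}\leq\overline{\Gamma}$. Let $m$ denote the normalized Haar probability on the compact group $\overline{\Lambda}$, and fix an arbitrary $\nu_0\in\pr(\partial_F\overline{\Gamma})$. Then the average
\[
\nu := \int_{\overline{\Lambda}} h_*\nu_0\,dm(h)
\]
is a well-defined probability measure on $\partial_F\overline{\Gamma}$ that is $\overline{\Lambda}$-invariant. Since $\Lambda$ sits inside $\overline{\Lambda}$, it is in particular $\Lambda$-invariant, so $\nu\in\pr_\Lambda(\partial_F\overline{\Gamma})$.

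For contractibility, I would argue that in fact \emph{every} probability on $\partial_F\overline{\Gamma}$ is $\Gamma$-contractible, which is stronger than what is needed. By definition of the Furstenberg boundary, the action $\overline{\Gamma}\act\partial_F\overline{\Gamma}$ is minimal and strongly proximal. The key observation is that both of these properties pass to the dense subgroup $\Gamma\leq\overline{\Gamma}$: since the action map $\overline{\Gamma}\times\pr(\partial_F\overline{\Gamma})\to\pr(\partial_F\overline{\Gamma})$ is continuous in the weak* topology (with $\overline{\Gamma}$ carrying the topology induced from $\operatorname{Symm}(\Gamma/\Lambda)$), density of $\Gamma$ forces the $\Gamma$-orbit closure of any measure to coincide with its $\overline{\Gamma}$-orbit closure, and similarly for orbit closures of points. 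Given any $\eta\in\pr(\partial_F\overline{\Gamma})$, strong proximality yields a net $(g_i)\subset\Gamma$ with $g_i\eta\to\delta_y$ for some $y\in\partial_F\overline{\Gamma}$; minimality then provides, for each $x\in\partial_F\overline{\Gamma}$, a net $(k_j)\subset\Gamma$ with $k_jy\to x$, so that $\delta_{k_jy}\in\overline{\Gamma\eta}^{\text{weak*}}$ for every $j$, whence $\delta_x\in\overline{\Gamma\eta}^{\text{weak*}}$ by closedness. Thus $\eta\in\pr^c(\partial_F\overline{\Gamma})$, and in particular $\pr_\Lambda(\partial_F\overline{\Gamma})\subseteq\pr^c(\partial_F\overline{\Gamma})$. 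Theorem~\ref{dyn-charcterization} then delivers the conclusion.

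The main obstacle is the descent step: verifying that strong proximality and minimality of the $\overline{\Gamma}$-action on $\partial_F\overline{\Gamma}$ transfer to the dense subgroup $\Gamma$. Everything else amounts to a standard Haar-averaging and a direct application of the dynamical characterization, but the descent requires that the action of $\overline{\Gamma}$ on $\partial_F\overline{\Gamma}$ (and hence on $\pr(\partial_F\overline{\Gamma})$) is jointly continuous, which uses the standing fact that $\overline{\Gamma}$ acts by homeomorphisms and that its topology is the subspace topology inherited from $\operatorname{Symm}(\Gamma/\Lambda)$ under which the orbit maps are continuous.
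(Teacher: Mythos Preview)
Your proof is correct and follows essentially the same route as the paper's. The paper simply phrases the contractibility step as ``since $\Gamma$ is dense in $\overline{\Gamma}$, $\partial_F\overline{\Gamma}$ is a $\Gamma$-boundary'' (your descent argument spelled out), obtains a $\Lambda$-invariant probability from compactness of $\overline{\Lambda}$ (your Haar-averaging step), and then invokes Corollary~\ref{inv-prob-->bnd} rather than Theorem~\ref{dyn-charcterization} directly---but that corollary is itself an immediate consequence of the theorem, so the difference is purely cosmetic.
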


\begin{proof}
Since $\Gamma$ is dense in $\overline{\Gamma}$, it follows that 
$\partial_F\overline{\Gamma}$ 
is a $\Gamma$-boundary. 
Since $\overline{\Lambda}$ is compact, it fixes some $\eta\in\pr(\partial_F\overline{\Gamma})$. 
Thus, Corollary \ref{inv-prob-->bnd} implies  $\partial_F\overline{\Gamma}$ 
is a $\Gamma/\Lambda$-boundary.
\end{proof}

\section{Non-commutative examples}

So far, all the examples we have given of FH-boundaries are commutative $C^*$-algebras. This is not always the case though:

\begin{theorem}
Every separable purely infinite unital $C^*$-algebra $\cA$ 
appears as a subalgebra of a $\pi$-boundary for some representation 
$\pi$ of some countable group $\Gamma$.
\end{theorem}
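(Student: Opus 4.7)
The plan is to construct a countable group $\Gamma$ and a unitary representation $\pi: \Gamma \to \cU(\cH)$ such that $\cA$ sits inside the image of a minimal $\Gamma$-projection on $B(\cH)$. The guiding principle is Proposition \ref{prop:bndy in inj env}, which locates $\cB_\pi$ inside $\cI(C^*_\pi(\Gamma))$: if we can arrange $C^*_\pi(\Gamma)$ to contain $\cA$ and then rigidify the $\Gamma$-action so that the minimal image necessarily fixes $\cA$ pointwise, the theorem follows.

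First, I would represent $\cA$ faithfully and non-degenerately on a separable Hilbert space $\cH$, and pick a countable subgroup $\Gamma_0 \leq \cU(\cA)$ generating $\cA$ as a $C^*$-algebra. Taking $\pi_0 : \Gamma_0 \hookrightarrow \cU(B(\cH))$ to be the inclusion yields $C^*_{\pi_0}(\Gamma_0) = \cA$, and $\cA$ is automatically $\Gamma_0$-invariant under the conjugation action $\Ad_{\pi_0(g)}$ since $\Gamma_0 \subseteq \cA$. However, $\Gamma_0$-invariance alone does not force $\cA \subseteq \cB_{\pi_0}$: a minimal $\Gamma_0$-projection can compress much of $\cA$ onto a strictly smaller operator subsystem, as indeed happens already in the commutative quasi-regular examples of Section 7.

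Next, I would enlarge $\Gamma_0$ to a countable $\Gamma$ by inductively adjoining further unitaries that rigidify the conjugation action on $\cA$. Here is where pure infiniteness enters: for every non-zero projection $p \in \cA$ there is a partial isometry $s \in \cA$ with $s^{*}s = 1$ and $ss^{*} = p$, and suitable combinations of such isometries and their adjoints (possibly after passing to an amplification, using $\cA \cong M_n(\cA)$) yield unitaries implementing non-trivial inner symmetries of $\cA$. At stage $n$, if the current minimal $\Gamma_n$-projection $\phi_n$ fails to fix some $a_n \in \cA$, one would adjoin a unitary witnessing the Kadison--Schwarz defect $\phi_n(a_n^{*} a_n) - \phi_n(a_n)^{*}\phi_n(a_n)$, thereby excluding $\phi_n$ from the $\Gamma_{n+1}$-equivariant maps. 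Taking $\Gamma = \bigcup_n \Gamma_n$ and the resulting representation $\pi$, no $\Gamma$-equivariant u.c.p.\ idempotent on $B(\cH)$ can shrink $\cA$, so $\cA$ embeds as a $C^*$-subalgebra of $\cB_\pi$.

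The hard part will be verifying that this rigidification procedure actually terminates correctly: at each stage we must produce a rigidifying unitary without inadvertently forcing the eventual minimal image to collapse below $\cA$. This is precisely where pure infiniteness is needed and clarifies the hypothesis of the theorem—the abundance of Cuntz-type relations in $\cA$ guarantees that the required unitaries always exist inside (a modest amplification of) $\cA$ itself, so each adjunction sharpens rigidity on $\cA$ while keeping the relevant $C^*_\pi(\Gamma)$ containing $\cA$. Making this precise requires a careful diagonal argument, simultaneously enumerating a countable dense family of candidate $\Gamma$-equivariant u.c.p.\ maps on $B(\cH)$ together with a countable dense set of non-scalar elements of $\cA$ that must be protected, and alternating between them so that in the limit no bad $\Gamma$-map on $B(\cH)$ survives to witness a collapse of $\cA$.
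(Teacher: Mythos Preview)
Your proposal has the right instinct---use pure infiniteness to manufacture unitaries that force any $\Gamma$-projection to fix $\cA$---but the mechanism you outline does not work as stated, and the paper's approach is quite different.

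The main gap is the diagonal argument. You propose to enumerate ``a countable dense family of candidate $\Gamma$-equivariant u.c.p.\ maps on $B(\cH)$'' and defeat them one at a time. But the set of $\Gamma$-equivariant u.c.p.\ maps on $B(\cH)$ is neither countable nor separable in any topology that lets you do this, and moreover it changes at every stage as you enlarge $\Gamma$. Defeating a single $\phi_n$ by adjoining one unitary does nothing to control the uncountably many other $\Gamma_{n+1}$-maps that appear. You also never specify how ``a unitary witnessing the Kadison--Schwarz defect'' actually excludes $\phi_n$; this is the heart of the matter and it is left as a slogan.

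The paper bypasses diagonalisation entirely by constructing $\Gamma$ in one shot so that \emph{every} $\Gamma$-equivariant u.c.p.\ idempotent $\Phi$ on $B(\cH)$ fixes each member of a countable generating set of projections $\{p_n\}\subseteq\cA$. The trick has two parts. First, for each $n$ adjoin a countable group $\Lambda_n$ of unitaries in $B(\cH)$ whose von Neumann algebra is the full commutant $\{p_n\}'$; equivariance under $\Lambda_n$ then forces $\Phi(p_n)\in\{p_n\}''=\bC p_n\oplus\bC(1-p_n)$, reducing the problem to determining two scalars. Second, use pure infiniteness to choose unitaries $u_n,v_n,w_n$ implementing $p_n\sim 1-p_n$ and a halving $p_n=q_1^{(n)}+q_2^{(n)}$ with $q_i^{(n)}\sim p_n$; a short computation with $\Phi(q_1^{(n)})+\Phi(q_2^{(n)})=\Phi(p_n)$ and idempotence of $\Phi$ pins the scalars down to $\Phi(p_n)=p_n$. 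Note in particular that the $\Lambda_n$ are taken from $\cU(B(\cH))$, not from $\cU(\cA)$, so $C^*_\pi(\Gamma)$ is typically much larger than $\cA$; your insistence on staying inside (an amplification of) $\cA$ is an unnecessary constraint that the paper simply drops.
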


\begin{proof}
Let $\Hil$ be a separable infinite-dimensional Hilbert space such that $\cA\subseteq B(\cH)$. Let $\{p_n : n\in\bN\}$ be a sequence of nonzero projections that generate $\cA$. Since $\cA$ is purely infinite, for any $n\in\bN$, both $p_n$ and $1-p_n$ are infinite projections. For each $n\in\bN$ choose a unitary $u_n$ such that $u_np_nu_n^* = 1-p_n$. Let $\Lambda_n$ be a countable group of unitaries that generates the von Neumann algebra of operators on $\cH$ which commute with $p_n$. Moreover, choose projections $q^{(n)}_1$ and $q^{(n)}_2$ and unitaries $v_n$ and 
$w_n$ such that $q^{(n)}_1+q^{(n)}_2 = p_n$, $v_nq^{(n)}_1v_n^* = p_n$ and $w_nq^{(n)}_2w_n^* = p_n$.
Let $\Gamma$ be the group generated by $\cup_n\Lambda_n$ and $\{u_n, v_n, w_n :n\in\bN\}$.

Suppose $\Phi:B(\cH)\to B(\cH)$ is a $\Gamma$-projection. For any $n\in\bN$ we show $\Phi(p_n)=p_n$. So, fix $n\in \bN$. First, observe that since $u\Phi(p_n)u^* = \Phi(u p_n u^*) = \Phi(p_n)$ for all $u\in \Lambda_n$, we have $\Phi(p_n)\in \{p_n\}'' = \bC p_n\oplus\bC (1-p_n)$. Let $\Phi(p_n) = \alpha p_n + \beta (1-p_n)$ for $\alpha, \beta\in\bC$. Then $\Phi(q^{(n)}_1) = \Phi(v_n^*p_nv_n) = v_n^*\Phi(p_n)v_n = \alpha q^{(n)}_1 + \beta (1-q^{(n)}_1)$, and similarly (using the $w_n's$), $\Phi(q_2^{(n)}) = \alpha q_2^{(n)} + \beta (1-q_2^{(n)})$. Therefore \[\begin{split}\alpha p_n + \beta (1-p_n) &= \Phi(p_n) = \Phi(q^{(n)}_1) + \Phi(q^{(n)}_2) \\&= \alpha q^{(n)}_1 + \beta (1-q^{(n)}_1) + \alpha q^{(n)}_2 + \beta (1-q^{(n)}_2)\\&=\alpha p_n + \beta (2-p_n) ,\end{split}\]
which implies $\beta = 0$. Thus, $\alpha p_n = \Phi(p_n) = \Phi(\Phi(p_n)) = \alpha \Phi(p_n) = \alpha^2 p_n$. So $\alpha = 0$ or $\alpha = 1$. If $\alpha=0$, then $0=u_n \Phi(p_n)u_n^* = \Phi(1-p_n) = I - \Phi(p_n)$, which contradicts the fact that $p_n$ and $1-p_n$ are unitarily equivalent. This proves the claim.

Now let $\pi:\Gamma\to \cU(\cH)$ be the identity representation, and let $E: B(\cH)\to \cB_\pi$ be a $\Gamma$-projection. Then by the above we have $E|_\cA = \id$, which implies $\cA\subseteq \cB_\pi$. Moreover, since the multiplication of $\cB_\pi$ is defined by the Choi-Effros product induced by $E$, it is seen that the multiplication in $\cA$ coincides with that inherited from $\cB_\pi$. 
\end{proof}

\end{document}